\theoremstyle{plain} 
\newtheorem{thm}[equation]{Theorem}
\newtheorem{cor}[equation]{Corollary}
\newtheorem{lem}[equation]{Lemma}
\newtheorem{prop}[equation]{Proposition}
\theoremstyle{definition}
\newtheorem{defn}[equation]{Definition}
\theoremstyle{remark}
\newtheorem{rem}[equation]{Remark}
\newtheorem{ex}[equation]{Example}
\begin{document}

\title[Stacks and sheaves of categories]
{stacks and sheaves of 
categories as fibrant objects}

\author{Alexandru E. Stanculescu}

\thanks{Supported by the project CZ.1.07/2.3.00/20.0003
of the Operational Programme Education for Competitiveness of 
the Ministry of Education, Youth and Sports of the Czech Republic}

\date{\today}

\address{\newline Department of Mathematics and Statistics,
\newline Masarykova Univerzita,  Kotl\'{a}{\v{r}}sk{\'{a}} 2,\newline
611 37 Brno, Czech Republic}
\email{stanculescu@math.muni.cz}

\begin{abstract}
We show that the category of categories
fibred over a site is a generalized Quillen 
model category in which the weak 
equivalences are the local equivalences and 
the fibrant objects are the stacks, as they were 
defined by J. Giraud. The generalized model 
category restricts to one on the full subcategory 
whose objects are the categories fibred in 
groupoids. We show that the category
of sheaves of categories is a 
model category that is Quillen 
equivalent to the generalized 
model category for stacks and 
to the model category for strong 
stacks due to A. Joyal and M. Tierney.
\end{abstract}
\maketitle
\section{Introduction}

The idea that stacks are the fibrant objects
of a model category was developed by 
A. Joyal and M. Tierney in \cite{JT}
and by S. Hollander in \cite{Ho1}.
The former paper uses internal 
groupoids and categories in a 
Grothendieck topos instead of 
fibred categories, and the latter 
only considers categories fibred in 
groupoids. The fibrant objects of 
the Joyal-Tierney model category 
are called strong stacks 
(of groupoids or categories), and 
the fibrant objects of Hollander's 
model category are the stacks 
of groupoids. Using some elaborate
results from the homotopy
theory of simplicial presheaves on 
a site, Hollander shows that her model
category is Quillen equivalent
to the model category for strong
stacks of groupoids. 

The purpose of this paper
is to extend Hollander's work to 
general stacks and to show that
the category of internal categories in 
a Grothendieck topos admits another 
model category structure that is Quillen 
equivalent to the model category 
for strong stacks of categories. Our 
approach is different from both \cite{Ho1} 
and \cite{JT}, and it was entirely inspired
by J. Giraud's book \cite{Gi}.
In fact, the influence of Giraud's 
work on ours cannot be overestimated.

Concerning general stacks, we give a 
realization of the thought that parts of 
Giraud's presentation of the theory 
of stacks \cite[Chapitre II \S1, \S2]{Gi} hint at a 
connection with left Bousfield localizations 
of model categories as presented by P.S.
Hirschhorn \cite[Chapter 3]{Hi}.
In more detail, let $E$ be a site, that is, a category 
$E$ equipped with a Grothendieck topology
and let {\rm Fib}$(E)$ be the category of fibred 
categories over $E$ and cartesian functors
between them. Let $\mathcal{C}$ be the class 
of maps $R\subset E_{/S}$ of {\rm Fib}$(E)$,
where $S$ ranges through the objects of $E$,
$E_{/S}$ is the category of objects of $E$ over 
$S$ and $R$ is a covering sieve (or, refinement) 
of $S$. Then Giraud's definition of stack resembles 
that of a $\mathcal{C}$-local object and his 
characterization of bicovering (\emph{bicouvrant} 
in French) maps resembles the $\mathcal{C}$-local 
equivalences of \cite[Definition 3.1.4(1)]{Hi}.
The bicovering maps are better known under 
the name `local equivalences'. 

The realization goes as follows. In order to deal 
with the absence of all finite limits and colimits 
in {\rm Fib}$(E)$ we introduce, following a 
suggestion of A. Joyal, the notion of 
generalized model category (see Definition 8). 
Many concepts and results from the theory 
of model categories can be defined in the 
same way and have an exact analogue for 
generalized model categories. We disregard 
that $E$ has a topology and we show that 
{\rm Fib}$(E)$ is naturally a generalized model 
category with the weak equivalences, cofibrations 
and fibrations defined on the underlying 
functors (see Theorem 13). Then
we show that `the left Bousfield localization 
of {\rm Fib}$(E)$ with respect to $\mathcal{C}$ 
exists', by which we mean that there is a
generalized model category structure on
{\rm Fib}$(E)$ having the bicovering maps
as weak equivalences and the stacks over $E$
as fibrant objects (see Theorem 29). We call this 
generalized model category the generalized 
model category for stacks over $E$ and we 
denote it by {\rm Champ}$(E)$.

To construct {\rm Champ}$(E)$ we make essential 
use of the functorial construction of the stack 
associated to a fibred category (or, stack completion) 
and some of its consequences \cite[Chapitre II \S2]{Gi}, 
and of a special property of bicovering maps 
(see Lemma 33). 

We adapt the method of proof of the existence 
of {\rm Champ}$(E)$ to show that 
{\rm Fibg}$(E)$, the full subcategory of 
{\rm Fib}$(E)$ whose objects are the categories 
fibred in groupoids, is a generalized model 
category in which the weak equivalences are 
the bicovering maps and the fibrant objects 
are the stacks of groupoids over $E$ 
(see Theorem 46). 

Concerning internal categories in a 
Grothendieck topos, let $\widetilde{E}$ 
be the category of sheaves on $E$. We 
show that the category {\rm Cat}$(\widetilde{E})$
of internal categories and internal functors
in $\widetilde{E}$ (or, sheaves of categories)
is a model category that is Quillen equivalent
to {\rm Champ}$(E)$ (see Theorem 48). 
We denote this model category by
{\rm Stack}$(\widetilde{E})_{proj}$.
The fibrant objects of 
{\rm Stack}$(\widetilde{E})_{proj}$
are the sheaves of categories that are
taken to stacks by the Grothendieck 
construction functor. To construct 
{\rm Stack}$(\widetilde{E})_{proj}$ we make 
essential use of the explicit way in which 
Giraud constructs the stack associated 
to a fibred category---a way that highlights the 
role of sheaves of categories, and of a variation of
Quillen's path object argument (see Lemma 49).
The model category {\rm Stack}$(\widetilde{E})_{proj}$
is also Quillen equivalent 
via the identity functors to the 
model category for strong stacks \cite[Theorem 4]{JT} 
(see Proposition 52) and it behaves 
as expected with respect to morphisms 
of sites (see Proposition 53).

The paper contains a couple of
other results, essentially easy 
consequences of some of the results
we have proved so far: the bicovering maps 
and the natural fibrations make 
{\rm Fib}$(E)$ a category of fibrant 
objects \cite{Br} (see Proposition 40), and 
the $2$-pullback (or, iso-comma object) 
of fibred categories is a model for the 
homotopy pullback in {\rm Champ}$(E)$
(see Lemma 42). 

Appendix 1 is a review of 
Hollander's characterization 
of stacks of groupoids in terms of 
the homotopy sheaf condition
\cite[Theorem 1.1]{Ho1}.
Appendix 2 studies the behaviour 
of left Bousfield localizations of 
model categories under change 
of cofibrations. The result contained 
in it is needed in Appendix 3,
which is a review of the model category 
for strong stacks of categories 
\cite[Theorem 4]{JT} made with the 
hope that it sheds some light on 
the nature of strong stacks.

I wish to express my gratitude to the referee 
whose comments and suggestions greatly 
improved the content of the paper. I wish to 
thank Jean B\'{e}nabou and Claudio Hermida 
for very useful correspondence related to 
fibred categories.

\section{Fibred categories}

In this section we recall, for 
completeness and to fix notations, some 
results from the theory of fibred categories. 

We shall work in the setting of universes, as 
in \cite{Gi}, although we shall not mention
the universe in which we shall be working.
We shall also use the axiom of choice.

We denote by $SET$ the category of sets and 
maps, by $CAT$ the category of categories 
and functors and by $GRPD$ its full subcategory 
whose objects are groupoids. 

Let $E$ be a category. We denote by $E^{op}$ 
the opposite category of $E$. We let $CAT_{/E}$ 
be the category of categories over $E$. Arrows 
of $CAT_{/E}$ will be called $E$-\emph{functors}.
If $S$ is an object of $E$, $E_{/S}$ stands for the 
category of objects of $E$ over $S$.

If $A$ and $B$ are two categories, 
we denote by $[A,B]$ the category of 
functors from $A$ to $B$ and natural 
transformations between them. 

We denote by $\ast$ the terminal 
object of a category, when it exists. We 
denote by $J$ the groupoid with two 
objects and one isomorphism between them.

\subsection{Isofibrations}
One says that a functor $A\to B$ is an 
\emph{isofibration} (called  
\emph{transportable} in \cite[Expos\'e VI]{Gr}) 
if it has the right lifting property with respect 
to one of the maps $\ast\to J$. A functor is 
both an isofibration and an equivalence of 
categories if and only if it is an equivalence 
which is surjective on objects (surjective 
equivalence, for short). Given a 
commutative diagram in $CAT$
\begin{displaymath}
\xymatrix{
{A}\ar[r] \ar[d]_{f}
&{C} \ar[d]^{g}\\
{B} \ar[r]
&{D}
}
\end{displaymath}
in which the horizontal arrows are 
surjective equivalences, if $f$ is 
an isofibration then so is $g$.

\subsection{Fibrations and isofibrations}
Let $E$ be a category.  

Let $f\colon F\to E$ be a functor. We denote by
$F_{S}$ the fibre category over $S\in Ob(E)$.
An $E$-functor $u\colon F\to G$ induces a 
functor $u_{S}\colon F_{S}\to G_{S}$
for every $S\in Ob(E)$.

\begin{lem} 
{\rm (1)} Let $u\colon F\to G$ be an $E$-functor
with $F\to E$ an isofibration. Then the
underlying functor of $u$ is an isofibration 
if and only if for every $S\in Ob(E)$, the 
map $F_{S}\to G_{S}$ is an isofibration.

{\rm (2)} Every fibration is an isofibration. 
Every surjective equivalence is a fibration.

{\rm (3)} Let $u\colon F\to G$ be an
$E$-functor such that the underlying functor
of $u$ is an equivalence. If $F$ is a fibration
then so is $G$.

{\rm (4)} Let $u\colon F\to G$ be an
$E$-functor such that the underlying functor
of $u$ is an equivalence. If $G$ is a fibration
and $F\to E$ is an isofibration then $F$ is a fibration.

{\rm (5)} Let $F$ and $G$ be two fibrations
and $u\colon F\to G$ be an $E$-functor. 
If the underlying functor of $u$ is full and 
faithful then $u$ reflects cartesian arrows.

{\rm (6)} Let 
\begin{displaymath}
\xymatrix{
{F}\ar[r] \ar[d]_{u}
&{H} \ar[d]^{v}\\
{G} \ar[r]
&{K}
}
\end{displaymath}
be a commutative diagram in $CAT_{/E}$
with $F,G,H$ and $K$ fibrations. If the
underlying functors of the horizontal arrows
are equivalences, then $u$ is a cartesian 
functor if and only if $v$ is cartesian.
\end{lem}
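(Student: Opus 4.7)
My overall approach is to treat each item of the lemma as a variation on the theme of transporting cartesian or lifting structure across an $E$-functor, using the defining properties of (iso)fibrations and equivalences. I will repeatedly exploit two elementary facts: a cartesian arrow whose image in $E$ is an isomorphism is itself an isomorphism, and an isomorphism in the total category whose image in $E$ is an identity lies in a fibre.

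For part (1), the ``only if'' direction is immediate: any iso in $G_{S}$ projects to $\mathrm{id}_{S}$, and lifting it along $u$ using the isofibration property of $u$ gives an iso in $F$ over $\mathrm{id}_{S}$, which then belongs to $F_{S}$. For the ``if'' direction, given an iso $\beta\colon u(y)\to z$ in $G$, I first lift its image in $E$ to an iso in $F$ starting at $y$ using that $F\to E$ is an isofibration; composing with $\beta$ reduces the remaining problem to lifting an iso in some $G_{T}$ along $u_{T}$, which is handled by hypothesis. For part (2), the fibration-implies-isofibration statement follows from cartesian lifts of isos being isos; a surjective equivalence is a fibration because surjectivity on objects supplies a candidate source and full faithfulness yields unique lifts of base arrows, which are cartesian by a direct check using faithfulness of $p$.

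Parts (3) and (4) are transport statements along the equivalence $u$. For (3), given $y\in G_{S}$ and $\alpha\colon T\to S$, essential surjectivity and fullness of $u$ produce, up to a vertical isomorphism, a cartesian lift in $F$ whose image in $G$, corrected by that isomorphism, supplies a cartesian lift of $\alpha$ ending at $y$. For (4), I pull a cartesian lift of $\alpha$ in $G$ back along $u$: essential surjectivity gives a candidate whose image in $E$ is only \emph{isomorphic} to $T$, and here the isofibration assumption on $F\to E$ intervenes to correct the base, after which fullness and faithfulness deliver the required arrow in $F$, shown to be cartesian using (5). Part (5) is a clean exercise: given that $u(\phi)$ is cartesian in $G$, the universal property of $\phi$ in $F$ is verified by transporting the universal property of $u(\phi)$ via fullness (for existence of the factoring arrow) and faithfulness (for its uniqueness and for commutativity of the triangle).

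For part (6), the key input I need beyond (5) is that an $E$-equivalence between fibrations also \emph{preserves} cartesian arrows; granting this, if $\phi$ is cartesian in $F$ and $u$ is cartesian, then the image of $\phi$ in $H$ is cartesian by preservation, $u$ maps it to a cartesian arrow of $G$, and then $v$ applied to $\phi$ factors through this via the horizontal equivalence $G\to K$, so it is cartesian up to a vertical iso; the converse uses (5) applied to the horizontal equivalence $F\to H$. Preservation itself I prove by comparing the image of a cartesian arrow with a genuine cartesian lift in the target: essential surjectivity and full faithfulness let me pull that lift back to $F$ and identify it with the original arrow up to a unique vertical isomorphism, forcing the comparison map in the target to be an iso. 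This preservation lemma is the main obstacle, since it is the only step that genuinely needs all three ingredients of an equivalence simultaneously and relies on (5) as a sub-step.
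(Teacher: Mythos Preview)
Your strategy is sound and your arguments are essentially correct, but your route differs from the paper's in two notable ways. First, for (3), (4), and the preservation half of (6) the paper simply cites results from SGA1 (Expos\'e~VI, Corollaire~4.4, Proposition~5.3(i), Proposition~6.2), whereas you supply direct arguments; both approaches are legitimate, and yours is more self-contained. Second, your proof of (5) proceeds by directly verifying the universal property of $\phi$ via fullness and faithfulness of $u$, while the paper instead factors $\alpha$ as a cartesian arrow composed with a vertical one (using that $F$ is a fibration) and shows the vertical part is an isomorphism. Your argument for (5) is in fact more general: it never uses that $F$ is a fibration, only that $u$ is full and faithful. This matters, because you invoke (5) inside your proof of (4), where $F$ is not yet known to be a fibration; as stated this is circular, but the circularity dissolves once you observe that your own proof of (5) does not require that hypothesis. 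You should make this explicit. Finally, your sketch of (6) has the right ingredients (preservation along equivalences plus reflection from (5)) but the bookkeeping is garbled: to show $v$ is cartesian you must start from a cartesian arrow in $H$, pull it back along the equivalence $F\to H$ up to isomorphism, reflect via (5), push forward by $u$, and then preserve along $G\to K$; the converse uses (5) applied to $G\to K$, not $F\to H$. The paper avoids this by again citing SGA1 for the preservation step.
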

\begin{proof}
(1) We prove sufficiency. Let $\beta\colon u(x)\to y$
be an isomorphism and let $S=g(y)$. Then 
$g(\beta)\colon f(x)\to S$ is an isomorphism
therefore there is an isomorphism 
$\alpha\colon x\to x_{0}$ such that $f(\alpha)=g(\beta)$
since $f$ is an isofibration. The composite
$\beta u(\alpha^{-1})\colon u(x_{0})\to y$
lives in $G_{S}$ hence there is an isomorphism
$\alpha'\colon x_{0}\to x_{1}$ such that
$u(\alpha')=\beta u(\alpha^{-1})$ since
$u_{S}$ is an isofibration. One has 
$u(\alpha'\alpha)=\beta$.

(2) is straightforward. (3) and (4) are
consequences of \cite[Expos\'e VI Corollaire 
4.4 et Proposition 6.2]{Gr}.

(5) Let $f\colon F\to E$ and $g\colon G\to E$
be the structure maps. Let $\alpha\colon x\to y$ 
be a map of $F$ such that $u(\alpha)$ is cartesian.
We can factorize $\alpha$ as $c\gamma$, where
$c\colon z\to y$ is a cartesian map over $f(\alpha)$ 
and $\gamma\colon x\to z$ is a vertical map. 
Since $u(\alpha)$ is cartesian and $gu(\alpha)=
gu(c)$, there is a unique $\epsilon\colon u(z)\to u(x)$
such that $u(\alpha)\epsilon=u(c)$. Then 
$\epsilon=u(\beta)$ since $u$ is full, where 
$\beta\colon z\to x$. Hence $c=\alpha\beta$
since $u$ is faithful. Since $c$ is cartesian it
follows that $\gamma\beta$ is the identity,
and since $u(\alpha)$ is cartesian it follows
that $\beta\gamma$ is the identity. Thus, 
$\gamma$ is a cartesian map.

(6) is a consequence of (5) and
\cite[Expos\'e VI Corollaire 
4.4 et Proposition 5.3(i)]{Gr}.
\end{proof}
A \emph{sieve} of $E$
is a collection $R$ of objects of $E$ such that
for every arrow $X\to Y$ of $E$, $Y\in R$ implies
$X\in R$. Let $F\to E$ be a fibration and $R$ a sieve 
of $F$. The composite $R\subset F\to E$ is 
a fibration and $R\subset F$ is 
a cartesian functor.

A surjective equivalence takes sieves to sieves.
\subsection{The $2$-categories 
$\mathscr{F}ib(E)$ and $\mathscr{F}ibg(E)$}
Let $E$ be a category. We denote by 
{\rm Fib}$(E)$ the category whose objects 
are the categories fibred over $E$ and whose 
arrows are the cartesian functors. 

\subsubsection*{} 
Let $F$ and $G$ be two objects of {\rm Fib}$(E)$. 
The cartesian functors from $F$ to $G$ and 
the cartesian (sometimes called vertical) 
natural transformations between them form
a category which we denote by 
${\bf Cart}_{E}(F,G)$. This defines 
a functor
\begin{displaymath}
\xymatrix{
{{\bf Cart}_{E}(-,-)\colon 
{\rm Fib}(E)^{op}\times {\rm Fib}(E)}
\ar[r] &{CAT}
}
\end{displaymath}
so that the fibred categories over $E$,
the cartesian functors and 
cartesian natural transformations 
between them form a $2$-category
which we denote by $\mathscr{F}ib(E)$.

The category {\rm Fib}$(E)$ has finite products.
The product of two objects $F$ and $G$
is the pullback $F\times_{E}G$ .

Let $A$ be a category and $F\in {\rm Fib}(E)$.
We denote by $A\times F$ the pullback of
categories
\begin{displaymath}
\xymatrix{
{A\times F}\ar[r] \ar[d]
&{F} \ar[d]\\
{A\times E} \ar[r]
&{E}
}
\end{displaymath}
$A\times F$ is the product 
in {\rm Fib}$(E)$ of $F$
and $A\times E$.
The construction defines a functor
\begin{displaymath}
\xymatrix{
{-\times -\colon CAT\times {\rm Fib}(E)}
\ar[r] &{{\rm Fib}(E)}
}
\end{displaymath}
We denote by $F^{(A)}$ the pullback 
of categories
\begin{displaymath}
\xymatrix{
{F^{(A)}}\ar[r] \ar[d]
&{[A,F]} \ar[d]\\
{E} \ar[r]
&{[A,E]}
}
\end{displaymath}
so that $(F^{(A)})_{S}=[A,F_{S}]$.
The functor $-\times F$ is left adjoint
to ${\bf Cart}_{E}(F,-)$ and the functor
$A\times -$ is left adjoint to $(-)^{(A)}$.
These adjunctions are natural in $F$ and $A$.
There are isomorphisms that are 
natural in $F$ and $G$
\begin{equation}
{\bf Cart}_{E}(A\times F,G)\cong 
[A,{\bf Cart}_{E}(F,G)]\cong 
{\bf Cart}_{E}(F,G^{(A)})
\end{equation}
so that $\mathscr{F}ib(E)$ is
tensored and cotensored over 
the monoidal category $CAT$.

Let $F$ and $G$ be two objects of {\rm Fib}$(E)$.
We denote by $\mathsf{CART}(F,G)$ the
object of {\rm Fib}$(E)$ associated 
by the Grothendieck construction to
the functor $E^{op}\to CAT$ which
sends $S\in Ob(E)$ to 
${\bf Cart}_{E}(E_{/S}\times F,G)$,
so that
\begin{equation}
\mathsf{CART}(F,G)_{S}=
{\bf Cart}_{E}(E_{/S}\times F,G)
\end{equation}
There is a natural equivalence of categories
\begin{equation}
{\bf Cart}_{E}(F\times G,H)\simeq 
{\bf Cart}_{E}(F,\mathsf{CART}(G,H))
\end{equation}

\subsubsection*{}
The Grothendieck construction functor
\begin{displaymath}
\xymatrix{
{[E^{op},CAT]}\ar[r]^{\Phi}&{{\rm Fib}(E)}
}
\end{displaymath}
has a right adjoint $\mathcal{S}$
given by $\mathcal{S}F(S)={\bf Cart}_{E}(E_{/S},F)$.
$\Phi$ and $\mathcal{S}$ are $2$-functors
and the adjoint pair $(\Phi,\mathcal{S})$
extends to a $2$-adjunction betweeen 
the $2$-categories $[E^{op},CAT]$
and $\mathscr{F}ib(E)$. $\mathcal{S}F$ is a 
split fibration and $\mathcal{S}$ sends maps in 
{\rm Fib}$(E)$ to split functors. The composite 
$\mathsf{S}=\Phi\mathcal{S}$ sends 
fibrations to split fibrations and 
maps in {\rm Fib}$(E)$ to split functors.
The counit of the $2$-adjunction 
$(\Phi,\mathcal{S})$ is a $2$-natural 
transformation $\mathsf{v}\colon \mathsf{S}
\to Id_{\mathscr{F}ib(E)}$.
For every object $F$ of {\rm Fib}$(E)$ 
and every $S\in Ob(E)$ the map
\begin{equation}
(\mathsf{v}F)_{S}
\colon{\bf Cart}_{E}(E_{/S},F)\to F_{S}
\end{equation}
is a surjective equivalence.

$\Phi$ has also a left adjoint $\mathsf{L}$,
constructed as follows. For any category
$A$, the functor 
\begin{displaymath}
-\times A\colon CAT\to {\rm Fib}(A)
\end{displaymath}
has a left adjoint 
$\underset{\longrightarrow}{\rm Lim}(-/A)$
which takes $F$ to the category obtained
by inverting the cartesian morphisms
of $F$. If $F\in {\rm Fib}(E)$, 
\begin{displaymath}
\mathsf{L}F(S)=\underset{\longrightarrow}
{\rm Lim}(E^{/S}\times_{E}F/E^{/S})
\end{displaymath}
where $E^{/S}$ is the category of 
objects of $E$ under $S$. We denote
by $\mathsf{l}$ the unit of the adjoint
pair $(\mathsf{L},\Phi)$. For every 
$S\in Ob(E)$, the map 
$(\mathsf{l}F)_{S}\colon F_{S}
\to \mathsf{L}F(S)$ is an equivalence 
of categories. The adjoint pair
$(\mathsf{L},\Phi)$ extends to a 
$2$-adjunction betweeen the 
$2$-categories $[E^{op},CAT]$
and $\mathscr{F}ib(E)$.

\subsubsection*{} Let $F$ be an object of {\rm Fib}$(E)$.
We denote by $F^{cart}$ the subcategory of $F$ which
has the same objects and whose arrows are the cartesian
arrows. The composite $F^{cart}\subset F\to E$
is a fibration and $F^{cart}\subset F$ is a map in
{\rm Fib}$(E)$. For each $S\in Ob(E)$, $(F^{cart})_{S}$
is the maximal groupoid associated to $F_{S}$.
A map $u\colon F\to G$ of {\rm Fib}$(E)$
induces a map $u^{cart}\colon F^{cart}\to G^{cart}$ 
of {\rm Fib}$(E)$. In all, we obtain a functor
$(-)^{cart}\colon{\rm Fib}(E)\to {\rm Fib}(E)$.
One says that $F$ is \emph{fibred in groupoids} 
if the fibres of $F$ are groupoids. This is equivalent 
to saying that $F^{cart}=F$ and, if $f\colon F\to E$
is the structure map of $F$, to saying that
for every object $x$ of $F$,
the induced map $f_{/x}\colon F_{/x}\to
E_{/f(x)}$ is a surjective equivalence.

We denote by {\rm Fibg}$(E)$
the full subcategory of {\rm Fib}$(E)$ 
consisting of categories fibred in groupoids. 
The inclusion functor ${\rm Fibg}(E)\subset 
{\rm Fib}(E)$ has $(-)^{cart}$ as right adjoint.
{\rm Fibg}$(E)$ is a full subcategory of
$CAT_{/E}$.

We denote by $\mathscr{F}ibg(E)$ the full 
sub-$2$-category of $\mathscr{F}ib(E)$
whose objects are the categories fibred in 
groupoids. $\mathscr{F}ibg(E)$ is a 
$GRPD$-category. If $F$ and $G$ are 
two objects of $\mathscr{F}ibg(E)$,
we denote the $GRPD$-hom between $F$ 
and $G$ by ${\bf Cartg}_{E}(F,G)$.
$\mathscr{F}ibg(E)$ is tensored and 
cotensored over $GRPD$ with tensor
and cotensor defined by the same
formulas as for {\rm Fib}$(E)$.

$\mathscr{F}ib(E)$ becomes a 
$GRPD$-category by change of base along 
the maximal groupoid functor $max\colon 
CAT\to GRPD$. Then the inclusion 
$\mathscr{F}ibg(E)\subset \mathscr{F}ib(E)$
becomes a $GRPD$-functor which has
$(-)^{cart}$ as right $GRPD$-adjoint.
In particular, we have a natural 
isomorphism
\begin{equation}
{\bf Cartg}_{E}(F,G^{cart})\cong 
max{\bf Cart}_{E}(F,G)
\end{equation}

\subsection*{A change of base}
Let $m\colon A\to B$ be a functor.
There is a $2$-functor
\begin{displaymath}
m_{\bullet}^{fib}\colon
\mathscr{F}ib(B)\to \mathscr{F}ib(A)
\end{displaymath}
given by $m_{\bullet}^{fib}(F)=F\times_{B}A$.
If $A$ is fibred in groupoids with structure
map $m$, $m_{\bullet}^{fib}$ has a left
$2$-adjoint $m^{\bullet}$ that is given 
by composing with $m$. 

Let $P$ be a presheaf on $A$ 
and $D\colon SET\to CAT$
be the discrete category functor.
The functor $D$ induces a functor
$D\colon [E^{op},SET]\to [E^{op},CAT]$.
We denote the category $\Phi DP$
by $A_{/P}$, often called the 
category of elements of $P$.
As a consequence of the above 
$2$-adjunction we have a 
natural isomorphism
\begin{equation}
{\bf Cart}_{A}(A_{/P},m_{\bullet}^{fib}(F))
\cong {\bf Cart}_{B}(A_{/P},F)
\end{equation}
Let $E$ be a category
and $P$ a presheaf on $E$.
Let $m$ be the canonical map 
$E_{/P}\to E$. We denote 
$m_{\bullet}^{fib}(F)$ by
$F_{/P}$. As a consequence of 
the above $2$-adjunction we 
have a natural isomorphism
\begin{displaymath}
{\bf Cart}_{E_{/P}}(E_{/P},F_{/P})
\cong {\bf Cart}_{E}(E_{/P},F)
\end{displaymath}

\section{Generalized model categories}

\subsection{}
We shall need to work with a more 
general notion of (Quillen) model 
category than in the current literature
(like \cite{Hi}). In this section we shall
introduce the notion of generalized model 
category. Many concepts and results from 
the theory of model categories can be 
defined in the same way and have an 
exact analogue for generalized model 
categories. We shall review below some 
of them.
\begin{defn}
A \emph{generalized model category}
is a category $\mathcal{M}$ together
with three classes of maps {\rm W}, 
{\rm C} and {\rm F} (called 
\emph{weak equivalences}, 
\emph{cofibrations} and \emph{fibrations}) 
satisfying the following axioms:

A1: $\mathcal{M}$ has initial and terminal objects.

A2: The pushout of a cofibration along any map
exists and the pullback of a fibration along any
map exists.

A3: {\rm W} has the two out of three property.

A4: The pairs $({\rm C},{\rm F}\cap {\rm W})$ 
and $({\rm C}\cap {\rm W},{\rm F})$ are weak 
factorization systems.
\end{defn}
If follows from the definition that the 
classes {\rm C} and ${\rm C}\cap {\rm W}$
are closed under pushout and that the classes
{\rm F} and ${\rm F}\cap {\rm W}$ are closed 
under pullback. 

The opposite of the underlying 
category of a generalized model category
is a generalized model category.

Let $\mathcal{M}$ be a generalized 
model category. A map of $\mathcal{M}$ 
is a \emph{trivial fibration} if it is 
both a fibration and a weak equivalence,
and it is a \emph{trivial cofibration} if it is 
both a cofibration and a weak equivalence.
An object of $\mathcal{M}$
is \emph{cofibrant} if the map to it from 
the initial object is a cofibration, and it is
\emph{fibrant} if the map from it to the 
terminal object is a fibration. 
Let $X$ be an object of $\mathcal{M}$. 
For every cofibrant object $A$ of $\mathcal{M}$, 
the coproduct $A\sqcup X$ exists and the map 
$A\to A\sqcup X$ is a cofibration.
Dually, for every fibrant object $Z$, the 
product $Z\times X$ exists and the map 
$Z\times X\to X$ is a fibration.

The class of weak equivalences of a generalized
model category is closed under retracts 
\cite[Proposition 7.8]{JT2}.
\subsection{}
Let $\mathcal{M}$ be a generalized model 
category with terminal object $\ast$. Let 
$f\colon X\to Y$ be a map of $\mathcal{M}$ 
between fibrant objects. We review the
construction of the mapping path 
factorization of $f$ \cite{Br}. Let 
\begin{displaymath}
\xymatrix{
{Y}\ar[r]^{s} &{PathY} 
\ar[r]^{p_{0}\times p_{1}} & {Y\times Y}
}
\end{displaymath}
be a factorization of the diagonal 
map $Y\to Y\times Y$ into a weak 
equivalence $s$ followed by a fibration 
$p_{0}\times p_{1}$. Consider the 
following diagram
\begin{displaymath}
\xymatrix{
&{Pf}\ar[r]^{q} \ar[d]_{\pi_{f}}
&{X\times Y} \ar[d]_{f\times Y} 
\ar[r]^{p_{X}} &X\ar[d]^{f}\\
{Y} \ar[r]_{s} &
PathY \ar[r]_{p_{0}\times p_{1}} 
& Y\times Y \ar[r]_{p_{0}} \ar[d]_{p_{1}} 
& Y\ar[d]\\
& &Y\ar[r] &{\ast}
}
\end{displaymath}
in which all squares are pullbacks.
The object $Pf$ is fibrant.
There is a unique map $j_{f}\colon X\to Pf$
such that $\pi_{f}j_{f}=sf$ and 
$p_{X}qj_{f}=1_{X}$. The map
$p_{X}q$ is a trivial fibration, hence 
the map $j_{f}$ is a weak equivalence.
Put $q_{f}=p_{1}(f\times Y)q$. Then
$q_{f}$ is a fibration and $f=q_{f}j_{f}$.

In a generalized model category, the 
pullback of a weak equivalence between
fibrant objects along a fibration is a 
weak equivalence \cite[Lemma 2 on page 428]{Br}.
\subsection{}
Let $\mathcal{M}$ be a generalized 
model category. A \emph{left Bousfield 
localization} of $\mathcal{M}$ is a 
generalized model category 
{\rm L}$\mathcal{M}$ on the underlying 
category of $\mathcal{M}$ having the same 
class of cofibrations as $\mathcal{M}$ 
and a bigger class of weak equivalences. 
\begin{lem}
Let $\mathcal{M}$ be a generalized 
model category with {\rm W}, {\rm C} 
and {\rm F} as weak equivalences, 
cofibrations and fibrations. Let {\rm W}$'$
be a class of maps of $\mathcal{M}$ that
contains {\rm W} and has the two out
of three property. We define {\rm F}$'$
to be the class of maps having the 
right lifting property with respect
to every map of ${\rm C}\cap {\rm W'}$.

Then ${\rm L}\mathcal{M}=({\rm W}',
{\rm C},{\rm F}')$ is a left Bousfield 
localization of $\mathcal{M}$ if and only 
if the pair $({\rm C}\cap {\rm W}',F')$
is a weak factorization system. Moreover,
$({\rm C}\cap {\rm W}',F')$ is a weak 
factorization system if and only if
the class ${\rm C}\cap {\rm W}'$ is closed 
under codomain retracts and every arrow 
of $\mathcal{M}$ factorizes as a map 
in ${\rm C}\cap {\rm W'}$ 
followed by a map in {\rm F}$'$.
\end{lem}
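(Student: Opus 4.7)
The plan is to reduce both equivalences to standard facts about weak factorization systems, with the key preliminary observation being an identification of the trivial fibrations of the would-be localization with those of $\mathcal{M}$.

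First I would establish the sandwich ${\rm F}\cap {\rm W}\subseteq {\rm F}'\subseteq {\rm F}$. The right inclusion is immediate because ${\rm C}\cap {\rm W}\subseteq {\rm C}\cap {\rm W}'$ and ${\rm F}$ is defined by right lifting against the smaller class. The left inclusion holds because ${\rm F}\cap {\rm W}$ has the right lifting property against all of ${\rm C}$ (via the original weak factorization system) and hence against ${\rm C}\cap {\rm W}'$. Next I would show ${\rm F}'\cap {\rm W}'={\rm F}\cap {\rm W}$. The inclusion $\supseteq$ is clear from the sandwich and ${\rm W}\subseteq {\rm W}'$. For $\subseteq$, given $f\in {\rm F}'\cap {\rm W}'$, factor $f=qi$ with $i\in {\rm C}$ and $q\in {\rm F}\cap {\rm W}$ using the original weak factorization system. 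Since $q\in {\rm W}\subseteq {\rm W}'$ and $f\in {\rm W}'$, the two out of three property places $i\in {\rm C}\cap {\rm W}'$, so $f$ lifts against $i$; the standard retract argument exhibits $f$ as a retract of $q$, and closure of ${\rm F}\cap {\rm W}$ under retracts (weak equivalences are closed under retracts, and ${\rm F}$ is a right class of a weak factorization system) gives $f\in {\rm F}\cap {\rm W}$.

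With this identification in hand, the first equivalence becomes transparent. The axioms A1 and A3 for ${\rm L}\mathcal{M}$ are inherited from $\mathcal{M}$ and the hypotheses on ${\rm W}'$. For A2, pushouts of cofibrations are untouched, and pullbacks of ${\rm F}'$-maps exist because ${\rm F}'\subseteq {\rm F}$ and pullbacks of ${\rm F}$-maps exist in $\mathcal{M}$. Since ${\rm F}'\cap {\rm W}'={\rm F}\cap {\rm W}$, the weak factorization system $({\rm C},{\rm F}'\cap {\rm W}')$ coincides with the original $({\rm C},{\rm F}\cap {\rm W})$ and so is automatic. Thus the only content of A4 not already present in $\mathcal{M}$ is that $({\rm C}\cap {\rm W}',{\rm F}')$ is a weak factorization system, proving the first equivalence.

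For the second equivalence the forward direction is immediate, since any left class of a weak factorization system is closed under retracts and factorization is part of the definition. For the reverse direction, ${\rm F}'=({\rm C}\cap {\rm W}')^{\perp}$ by construction, so what must be verified is that ${\rm C}\cap {\rm W}'$ equals ${}^{\perp}{\rm F}'$ (not merely sits inside it). Given $f\in {}^{\perp}{\rm F}'$, use the assumed factorization to write $f=pi$ with $i\in {\rm C}\cap {\rm W}'$ and $p\in {\rm F}'$; solving the lifting problem with top map $i$, left map $f$, right map $p$, bottom map $1$ produces a section exhibiting $f$ as a codomain retract of $i$, so $f\in {\rm C}\cap {\rm W}'$ by hypothesis.

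The arguments are all essentially formal consequences of the retract trick and the sandwich ${\rm F}\cap {\rm W}\subseteq {\rm F}'\subseteq {\rm F}$; the main point requiring attention, rather than a genuine obstacle, is the identification ${\rm F}'\cap {\rm W}'={\rm F}\cap {\rm W}$, since everything else hinges on it. One should also be careful that all the pushouts and pullbacks invoked by the retract arguments live in the possibly-incomplete category $\mathcal{M}$, but this is guaranteed by A2 applied to cofibrations and fibrations of $\mathcal{M}$.
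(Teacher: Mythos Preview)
Your proposal is correct and follows essentially the same route as the paper: both proofs hinge on the identification ${\rm F}'\cap {\rm W}'={\rm F}\cap {\rm W}$ via the retract argument, after which the first equivalence reduces to checking the remaining weak factorization system. The paper dispatches the second equivalence in one line as ``a standard characterization of weak factorization systems,'' whereas you spell out the codomain-retract argument explicitly; otherwise the arguments are the same.
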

\begin{proof}
We prove the first statement.
The necessity is clear. Conversely,
since ${\rm C}\cap {\rm W}\subset
{\rm C}\cap {\rm W}'$ it follows 
that ${\rm F}'\subset {\rm F}$. This 
implies that the second part of
Axiom A2 is satisfied. To complete
the proof it suffices to show that
${\rm F}\cap {\rm W}=
{\rm F}'\cap {\rm W}'$.
Since ${\rm C}\cap {\rm W}'\subset
{\rm C}$, it follows that 
${\rm F}\cap {\rm W}\subset
{\rm F}'$ and hence that 
${\rm F}\cap {\rm W}\subset
{\rm F}'\cap {\rm W}'$. We show
that ${\rm F}'\cap {\rm W}'\subset
{\rm F}\cap {\rm W}$. Let $X\to Y$ be 
a map in ${\rm F}'\cap {\rm W}'$.
We factorize it into a map $X\to Z$ in 
{\rm C} followed by a map $Z\to Y$ in 
${\rm F}\cap {\rm W}$. Since {\rm W}$'$
has the two out of three property,
the map $X\to Z$ is in ${\rm C}\cap {\rm W}'$.
It follows that the commutative diagram
\begin{displaymath}
\xymatrix{
{X}\ar@{=}[r]\ar[d]
&{X} \ar[d]\\
{Z} \ar[r]
&{Y}
}
\end{displaymath}
has a diagonal filler, hence
$X\to Y$ is a (domain) retract 
of $Z\to Y$. Thus, the map $X\to Y$
is in ${\rm F}\cap {\rm W}$.

The second statement follows
from a standard characterization
of weak factorization systems.
\end{proof}
Let {\rm L}$\mathcal{M}$ be a left 
Bousfield localization of $\mathcal{M}$.
A map of $\mathcal{M}$ between fibrant 
objects in {\rm L}$\mathcal{M}$ is a 
weak equivalence (fibration) in 
{\rm L}$\mathcal{M}$ if and only if 
it is a weak equivalence (fibration) 
in $\mathcal{M}$. Let $X\to Y$ be a
weak equivalence in $\mathcal{M}$
between fibrant objects in $\mathcal{M}$.
Then $X$ is fibrant in {\rm L}$\mathcal{M}$
if and only if $Y$ is fibrant in 
{\rm L}$\mathcal{M}$.
\subsection{}
A generalized model category
is \emph{left proper} if every pushout
of a weak equivalence along a cofibration
is a weak equivalence. Dually, a
generalized model category
is \emph{right proper} if every pullback
of a weak equivalence along a fibration
is a weak equivalence. A generalized 
model category is \emph{proper}
if it is left and right proper. A left 
Bousfield localization of a left
proper generalized model category 
is left proper.

Let $\mathcal{M}$ be a right 
proper generalized model category.
Let 
\begin{equation}
\vcenter{
\xymatrix{
{X}\ar[r]^{g} & {Z} & {Y}\ar[l]_{f}
}
}
\end{equation}
be a diagram in $\mathcal{M}$. 
We factorize $f$ as a trivial
cofibration $Y\overset{i_{f}}\to E(f)$
followed by a fibration
$E(f)\overset{p_{f}}\to Z$.
We factorize $g$ as a trivial
cofibration $X\overset{i_{g}}\to E(g)$
followed by a fibration
$E(f)\overset{p_{g}}\to Z$.
The \emph{homotopy pullback}
of diagram (10) is defined
to be the pullback of the diagram
\begin{displaymath}
\xymatrix{
{E(g)}\ar[r]^{p_{g}} & {Z} & {E(f)}\ar[l]_{p_{f}}
}
\end{displaymath}
The analogue of \cite[Proposition 13.3.4]{Hi}
holds in this context. If $X,Y$ and $Z$ are 
fibrant, a model for the homotopy pullback
is $X\times _{Z}Pf$, where $Pf$ is the 
mapping path factorization of $f$ described 
in Section 3.2.

Let {\rm L}$\mathcal{M}$ be a left 
Bousfield localization of $\mathcal{M}$
that is right proper. 
We denote by $X\times_{Z}^{h}Y$
the homotopy pullback in $\mathcal{M}$
of diagram (10) and by 
$X\times_{Z}^{{\rm L}h}Y$ the 
homotopy pullback of the same diagram, 
but in {\rm L}$\mathcal{M}$.

\begin{prop}
{\rm (1)} Suppose that $X,Y$ and $Z$ are 
fibrant in {\rm L}$\mathcal{M}$. Then 
$X\times_{Z}^{h}Y$  is weakly 
equivalent in $\mathcal{M}$  
to $X\times_{Z}^{{\rm L}h}Y$.

{\rm (2)} Suppose that the pullback
of a map between fibrant objects in 
$\mathcal{M}$ that is both a fibration in
$\mathcal{M}$ and a weak equivalence 
in {\rm L}$\mathcal{M}$ is a weak 
equivalence in {\rm L}$\mathcal{M}$.
Suppose that $X,Y$ and $Z$ are 
fibrant in $\mathcal{M}$. 
Then $X\times_{Z}^{h}Y$  is weakly 
equivalent in {\rm L}$\mathcal{M}$  
to $X\times_{Z}^{{\rm L}h}Y$.
\end{prop}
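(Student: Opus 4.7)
For part (1), the plan is to exhibit a single object that serves as a model for both homotopy pullbacks. Since $X$, $Y$ and $Z$ are fibrant in ${\rm L}\mathcal{M}$, so is $Z\times Z$, and any factorization of the diagonal $Z\to Z\times Z$ in ${\rm L}\mathcal{M}$ as $Z\to Path^{L}Z\to Z\times Z$ has $Path^{L}Z$ fibrant in ${\rm L}\mathcal{M}$. By the characterization recalled at the end of Section 3.3 (weak equivalences and fibrations between ${\rm L}\mathcal{M}$-fibrant objects coincide with those in $\mathcal{M}$), this is equally a valid weak-equivalence-followed-by-fibration factorization in $\mathcal{M}$. Hence the mapping path $P^{L}f$ constructed from it is simultaneously a mapping path of $f$ in $\mathcal{M}$ and in ${\rm L}\mathcal{M}$, so $X\times_{Z}P^{L}f$ is a common model for $X\times_{Z}^{h}Y$ and $X\times_{Z}^{{\rm L}h}Y$.

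For part (2), first extract from the hypothesis the following lemma: if $w\colon A\to B$ is a weak equivalence in ${\rm L}\mathcal{M}$ with $A$, $B$ fibrant in $\mathcal{M}$, and $q\colon C\to B$ is a fibration in $\mathcal{M}$ with $C$ fibrant in $\mathcal{M}$, then $A\times_{B}C\to C$ is a weak equivalence in ${\rm L}\mathcal{M}$. To prove this, factor $w$ in $\mathcal{M}$ as $A\xrightarrow{i}D\xrightarrow{p}B$ with $i$ a trivial cofibration and $p$ a fibration in $\mathcal{M}$. Then $D$ is fibrant in $\mathcal{M}$ and $p$ is a weak equivalence in ${\rm L}\mathcal{M}$ by two-out-of-three, so the hypothesis applies to the pullback of $p$ along $q$; right properness of $\mathcal{M}$ handles the pullback of $i$ along the resulting fibration, and composing yields the claim.

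With the lemma in hand, construct a zigzag as follows. Let $Pf$ be the $\mathcal{M}$-mapping path of $f$, so that $X\times_{Z}^{h}Y\simeq X\times_{Z}Pf$. Factor $q_{f}\colon Pf\to Z$ in ${\rm L}\mathcal{M}$ as $Pf\to\widetilde{Pf}\to Z$, and further factor $\widetilde{Pf}\to Z$ in $\mathcal{M}$ as $\widetilde{Pf}\to\widetilde{Pf}'\to Z$ with $\widetilde{Pf}'$ fibrant in $\mathcal{M}$. Factor $g$ in $\mathcal{M}$ as $X\to E(g)\to Z$ and then in ${\rm L}\mathcal{M}$ as $E(g)\to E^{L}(g)\to Z$. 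Factor the composite weak equivalence $Y\to Pf\to\widetilde{Pf}$ in ${\rm L}\mathcal{M}$ as $Y\to E^{L}(f)\to\widetilde{Pf}$; by two-out-of-three the second map is a trivial fibration in ${\rm L}\mathcal{M}$, and $Y\to E^{L}(f)\to Z$ is then a valid ${\rm L}\mathcal{M}$-factorization of $f$. Pulling back this trivial fibration shows that $E^{L}(g)\times_{Z}\widetilde{Pf}$ also models $X\times_{Z}^{{\rm L}h}Y$. The zigzag
\[
X\times_{Z}Pf\to E(g)\times_{Z}Pf\to E(g)\times_{Z}\widetilde{Pf}'\leftarrow E(g)\times_{Z}\widetilde{Pf}\to E^{L}(g)\times_{Z}\widetilde{Pf}\leftarrow E^{L}(g)\times_{Z}E^{L}(f)
\]
is then a chain of weak equivalences in ${\rm L}\mathcal{M}$: the first and third by right properness of $\mathcal{M}$, the second by the lemma, the fourth by right properness of ${\rm L}\mathcal{M}$, and the fifth because it is the pullback of a trivial fibration in ${\rm L}\mathcal{M}$.

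The hard part is part (2); more specifically, the second arrow of the zigzag, where the hypothesis intervenes through the lemma to bridge $\mathcal{M}$-fibrations and ${\rm L}\mathcal{M}$-weak equivalences between $\mathcal{M}$-fibrant objects. Everything else in the chain is handled by the two right-properness assumptions already built into the setup.
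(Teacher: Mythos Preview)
Your argument is correct, but for part (2) you work considerably harder than necessary. The paper's proof is a two-line affair: factor $f$ in ${\rm L}\mathcal{M}$ as a trivial cofibration $Y\to Y_{0}$ followed by a fibration $Y_{0}\to Z$, then refactor $Y\to Y_{0}$ in $\mathcal{M}$ as a trivial cofibration $Y\to Y'$ followed by a fibration $Y'\to Y_{0}$. Now $Y'\to Z$ is an $\mathcal{M}$-fibration (composite of two), so $X\times_{Z}Y'$ models $X\times_{Z}^{h}Y$; and $Y_{0}\to Z$ is an ${\rm L}\mathcal{M}$-fibration, so $X\times_{Z}Y_{0}$ models $X\times_{Z}^{{\rm L}h}Y$. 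The single comparison map $X\times_{Z}Y'\to X\times_{Z}Y_{0}$ is the pullback of $Y'\to Y_{0}$, which is an $\mathcal{M}$-fibration and an ${\rm L}\mathcal{M}$-weak equivalence between $\mathcal{M}$-fibrant objects, so the hypothesis applies directly. Your preliminary lemma (extending the hypothesis from fibrations to arbitrary ${\rm L}\mathcal{M}$-weak equivalences between $\mathcal{M}$-fibrant objects) is correct and interesting in its own right, but the paper sidesteps it by arranging for the relevant map to already be a fibration. Likewise the five-term zigzag, with its separate treatment of the $g$-side and the detour through $\widetilde{Pf}'$, is unnecessary once one nests the two factorizations as above.

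For part (1) your approach is essentially an unpacking of what the paper delegates to \cite[Proposition~13.3.7]{Hi}: you produce a single object $X\times_{Z}P^{L}f$ modelling both homotopy pullbacks. One small point you leave implicit: to conclude weak equivalence \emph{in $\mathcal{M}$}, you need that the ${\rm L}\mathcal{M}$-weak equivalence between $X\times_{Z}P^{L}f$ and $X\times_{Z}^{{\rm L}h}Y$ is actually an $\mathcal{M}$-weak equivalence. This follows because both objects are ${\rm L}\mathcal{M}$-fibrant (they are built from pullbacks of ${\rm L}\mathcal{M}$-fibrations between ${\rm L}\mathcal{M}$-fibrant objects), and you have already invoked the relevant fact from Section~3.3.
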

\begin{proof}
(1) is a consequence of 
\cite[Proposition 13.3.7]{Hi}.
To prove (2) we first factorize $f$ in 
{\rm L}$\mathcal{M}$ as a
trivial cofibration $Y\to Y_{0}$ 
in followed by a fibration $Y_{0}\to Z$. 
Then we factorize $Y\to Y_{0}$ in 
$\mathcal{M}$ as a trivial cofibration 
$Y\to Y'$ in followed by a fibration 
$Y'\to Y_{0}$. By assumption 
the map $X\times_{Z}Y'\to X\times_{Z}Y_{0}$ 
is a weak equivalence in 
{\rm L}$\mathcal{M}$.
\end{proof}
\subsection{}
Let $\mathcal{M}$ and $\mathcal{N}$
be generalized model categories
and $F\colon \mathcal{M}\to
\mathcal{N}$ be a functor having
a right adjoint $G$. The adjoint pair 
$(F,G)$ is a \emph{Quillen pair} if $F$ 
preserves cofibrations and trivial cofibrations. 
Equivalently, if $G$ preserves fibrations 
and trivial fibrations. If the classes of
weak equivalences of $\mathcal{M}$
and $\mathcal{N}$ have the two out 
of six property, then $(F,G)$ is a Quillen pair
if and only if $F$ preserves cofibrations 
between cofibrant objects and trivial 
cofibrations if and only if $G$ preserves 
fibrations between fibrant objects and 
trivial fibrations (a result due to Joyal).

The adjoint pair 
$(F,G)$ is a \emph{Quillen equivalence} 
if $(F,G)$ is a Quillen pair and if for every 
cofibrant object $A$ in $\mathcal{M}$ 
and every fibrant object $X$ in $\mathcal{N}$,
a map $FA\to X$ is a weak equivalence
in $\mathcal{N}$ if and only if its
adjunct $A\to GX$ is a weak
equivalence in $\mathcal{M}$.

\section{The natural generalized 
model category on {\rm Fib}$(E)$}

We recall \cite{JT} that $CAT$ is a model 
category in which the weak equivalences
are the equivalences of categories, the
cofibrations are the functors that are 
injective on objects and the fibrations 
are the isofibrations. Therefore,
for every category $E$, $CAT_{/E}$
is a model category in which a map
is a weak equivalence, cofibration 
or fibration if it is one in $CAT$.

Let $E$ be a category.
\begin{defn}
Let $u\colon F\to G$ be a map of 
{\rm Fib}$(E)$. We say that $u$ is an 
$E$-\emph{equivalence} (\emph{isofibration}) 
if the underlying functor of $u$ is an 
equivalence of categories (isofibration).
We say that $u$ is a \emph{trivial fibration} if 
it is both an $E$-equivalence and an isofibration.
\end{defn}
\begin{thm}
\label{first}
The category {\rm Fib}$(E)$ is a proper 
generalized model category with the 
$E$-equivalences as weak equivalences,
the maps that are injective on objects as 
cofibrations and the isofibrations as fibrations.
\end{thm}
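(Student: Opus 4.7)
The strategy is to transfer the canonical (Joyal--Tierney) model structure on $CAT$ through the forgetful functor $\mathrm{Fib}(E) \to CAT_{/E}$, using the path object $F^{(J)}$ and the cylinder $J \times F$ from Section 2.3 to construct factorizations that remain inside $\mathrm{Fib}(E)$. Axiom A1 is immediate: the empty category and $E \xrightarrow{1_E} E$ are initial and terminal. Axiom A3 follows because $E$-equivalences are detected on underlying functors. For the pullback half of A2, given a cartesian isofibration $u \colon F \to G$ and any cartesian $v \colon H \to G$, the pullback $F \times_G H$ computed in $CAT_{/E}$ is a Grothendieck fibration with cartesian projections (fibrations being stable under arbitrary pullback), so it serves as the pullback in $\mathrm{Fib}(E)$.

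I would build the $(\mathrm{C} \cap \mathrm{W},\,\mathrm{F})$ factorization as a mapping cocylinder. Given $u \colon F \to G$, form $P$ as the pullback of $(p_0,p_1) \colon G^{(J)} \to G \times G$ along $u \times 1_G \colon F \times G \to G \times G$, and factor $u$ as
\begin{displaymath}
F \xrightarrow{x \mapsto (x,\,u(x),\,1_{u(x)})} P \longrightarrow G,
\end{displaymath}
in which the first map is injective on objects and an $E$-equivalence (it is a section of the trivial fibration $P \to F$ pulled back from $p_0 \colon G^{(J)} \to G$), and the second map is a cartesian isofibration since $(p_0,p_1)$ is one and such are preserved by pullback. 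Dually I would construct the $(\mathrm{C},\,\mathrm{F} \cap \mathrm{W})$ factorization as a mapping cylinder $\mathrm{Cyl}(u) = G \sqcup_F (J \times F)$, formed by pushing out the cartesian inclusion $F \hookrightarrow J \times F$ at $0$ along $u$, yielding $F \hookrightarrow \mathrm{Cyl}(u) \twoheadrightarrow G$ where the first map is injective on objects and the second is a cartesian surjective equivalence.

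For the lifting axioms, the $(\mathrm{C},\,\mathrm{F} \cap \mathrm{W})$ half is essentially automatic: a trivial fibration in $\mathrm{Fib}(E)$ is a cartesian surjective equivalence, hence fully faithful, and by Lemma 2(5) it reflects cartesian arrows, so any $CAT_{/E}$-lift of a square against a cofibration is automatically cartesian. The $(\mathrm{C} \cap \mathrm{W},\,\mathrm{F})$ half then follows by the retract argument applied to the mapping-cocylinder factorization: any cartesian trivial cofibration $f$ factors as $f = pi$ with $i$ the section into the cocylinder and $p$ a trivial fibration, and $f$ is exhibited as a retract of $i$ by a lift of the first kind, while $i$ has the required LLP by direct inspection of the pulled-back fibration structure. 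Properness is inherited from $CAT$: right properness is a pointwise statement at the underlying level, and left properness follows from the same cylinder construction because left properness holds for $CAT$.

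The main obstacle is the mapping-cylinder step: one must verify that the $CAT$-pushout $G \sqcup_F (J \times F)$ along a cartesian functor injective on objects is again a fibred category over $E$ with cartesian pushout maps, and that the induced map to $G$ is a surjective equivalence. This demands a careful case analysis of cartesian lifts in the pushout, since arbitrary pushouts in $\mathrm{Fib}(E)$ do not exist --- a failure that is precisely what forces the generalized rather than the ordinary model-category framework --- and the same verification simultaneously supplies the pushout-of-cofibration half of A2 for the cofibrations produced by the factorization.
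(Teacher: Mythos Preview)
Your cocylinder factorization for $(\mathrm{C}\cap\mathrm{W},\mathrm{F})$ is correct, but the overall route is more laborious than the paper's, and the step you flag as ``the main obstacle'' is exactly where the paper proceeds differently. For the $(\mathrm{C},\mathrm{F}\cap\mathrm{W})$ factorization the paper does \emph{not} build a mapping cylinder: it simply factors $u$ in $CAT_{/E}$ as an injective-on-objects functor $i\colon F\to H$ followed by a surjective equivalence $v\colon H\to G$, and then checks that this lives in $\mathrm{Fib}(E)$: $H\to E$ is a fibration because surjective equivalences are Grothendieck fibrations (Lemma 1(2)) and compose with $G\to E$; $i$ is cartesian because $v$ reflects cartesian arrows (Lemma 1(5)) and $vi=u$ is cartesian; $v$ is cartesian by SGA1 VI 5.3(i). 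The other factorization is handled the same way via Lemma 1(3). No pushout is needed for the factorizations, and the lifting axioms follow by lifting in $CAT_{/E}$ and checking the filler is cartesian by the same lemmas---your retract argument is not required.

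The pushout half of A2 must still be proved independently, and here your proposal has a genuine gap: you acknowledge the need for a ``careful case analysis of cartesian lifts'' but do not carry it out, and this is not how the paper proceeds. Lemma 17(2) instead uses the left adjoint $\mathsf{L}$ of $\Phi$: one compares $G\sqcup_F H$ to $\Phi\mathsf{L}G\sqcup_{\Phi\mathsf{L}F}\Phi\mathsf{L}H$, which is fibred since it lies in the image of $\Phi$; the comparison map is an equivalence by left properness of $CAT_{/E}$, and Lemma 1(4),(6) then transfer the fibration structure and cartesianness back. This detour through $[E^{op},CAT]$ is short and is the reason the adjoint triple is set up in Section 2.3. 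Finally, your parenthetical ``fibrations being stable under arbitrary pullback'' is not the right justification for the pullback half of A2: that $F\times_H G\to E$ is a Grothendieck fibration genuinely requires one leg to be an isofibration (Lemma 17(1)), not merely that $F,G,H$ are fibred.
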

The proof of Theorem 13 will be given
after some preparatory results.
\begin{prop}
Let $u\colon F\to G$ be a map of {\rm Fib}$(E)$. 
The following are equivalent:

{\rm (1)} $u$ is an $E$-equivalence.

{\rm (2)} For every $S\in Ob(E)$, the map 
$u_{S}\colon F_{S}\to G_{S}$ is an equivalence 
of categories.

{\rm (3)} $u$ is an equivalence in the 
$2$-category $\mathscr{F}ib(E)$.

{\rm (4)} ${\bf Cart}_{E}(u,X)\colon{\bf Cart}_{E}(G,X)
\to {\bf Cart}_{E}(F,X)$ is an 
equivalence for all $X\in {\rm Fib}(E)$.

{\rm (5)} ${\bf Cart}_{E}(X,u)\colon{\bf Cart}_{E}(X,F)
\to {\bf Cart}_{E}(X,G)$ is an 
equivalence for all $X\in {\rm Fib}(E)$.
\end{prop}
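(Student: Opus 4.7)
The plan is to establish $(1) \Leftrightarrow (2)$ by a direct fibrewise analysis, then $(1) \Rightarrow (3)$ by constructing an explicit cartesian quasi-inverse, and finally to deduce $(3) \Leftrightarrow (4)$ and $(3) \Leftrightarrow (5)$ from the fact that $2$-categorical equivalences are precisely the maps sent to equivalences by every representable; the implication $(3) \Rightarrow (1)$ is immediate.

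For $(1) \Rightarrow (2)$, I would fix $S \in Ob(E)$ and observe that $u_S$ inherits full-faithfulness from $u$, because a morphism between two objects of the fibre $F_S$ (resp.~$G_S$) is vertical over $S$ precisely when its image in $E$ is $1_S$. Essential surjectivity of $u_S$ would follow by taking $y \in G_S$, picking an isomorphism $\beta \colon u(x) \to y$ in $G$ provided by $(1)$, and then lifting $g(\beta)$ to a cartesian iso $x \to x'$ in $F$ (possible because $F \to E$ is a fibration and hence an isofibration by Lemma~1(2)), giving $x' \in F_S$ with $u(x') \cong y$ in $G_S$. Conversely, $(2) \Rightarrow (1)$ follows by factoring any arrow of $F$ as a cartesian map followed by a vertical map and invoking the universal property of cartesian lifts together with the fibrewise full-faithfulness from $(2)$.

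The heart of the proof is $(1) \Rightarrow (3)$. Using the axiom of choice, I would assign to each $y \in G$ an object $v(y) \in F$ together with an isomorphism $\epsilon_y \colon u(v(y)) \to y$ lying in the fibre over $g(y)$; the previous paragraph's lifting argument makes this possible and forces $f(v(y)) = g(y)$. Given $\gamma \colon y \to y'$, I would define $v(\gamma) \colon v(y) \to v(y')$ as the unique morphism with $u(v(\gamma)) = \epsilon_{y'}^{-1} \circ \gamma \circ \epsilon_y$, existence and uniqueness coming from fullness and faithfulness of $u$. This produces an $E$-functor $v$ together with a natural isomorphism $\epsilon \colon uv \cong 1_G$, and $vu \cong 1_F$ follows by a further application of full-faithfulness. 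The key point is that $v$ is cartesian: if $\gamma$ is cartesian in $G$ then so is $u(v(\gamma)) = \epsilon_{y'}^{-1} \gamma \epsilon_y$, and since $u$ is a full and faithful cartesian functor between fibrations, Lemma~1(5) tells me that $u$ reflects cartesian arrows, so $v(\gamma)$ is cartesian. The main obstacle is precisely this bookkeeping: arranging the choices so that $v$ is genuinely an $E$-functor and the natural isomorphisms are vertical, and checking that the hypotheses of Lemma~1(5) are met.

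Finally, $(3) \Rightarrow (4)$ and $(3) \Rightarrow (5)$ hold because the $2$-functors ${\bf Cart}_E(-, X)$ and ${\bf Cart}_E(X, -)$ send $2$-categorical equivalences to equivalences of categories. For $(4) \Rightarrow (1)$, I would take $X = F$ and lift $1_F$ along the equivalence ${\bf Cart}_E(G, F) \to {\bf Cart}_E(F, F)$ to produce $v \colon G \to F$ with $vu \cong 1_F$, then take $X = G$ and observe that $uv$ and $1_G$ both map to $u$ up to isomorphism in ${\bf Cart}_E(F, G)$, whence the full-faithfulness of the equivalence ${\bf Cart}_E(G, G) \to {\bf Cart}_E(F, G)$ yields $uv \cong 1_G$, so that the underlying functor of $u$ is an equivalence; the argument for $(5) \Rightarrow (1)$ is symmetric.
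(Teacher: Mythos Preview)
Your argument is correct. The paper's own proof of this proposition consists of a single sentence referring everything to \cite[Expos\'e VI]{Gr}, so your self-contained treatment is a genuine alternative rather than a reconstruction of an argument given in the text.

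The comparison is therefore one of citation versus direct proof. The paper relies on the SGA~1 machinery wholesale, whereas you isolate exactly which ingredients are needed: the isofibration property of fibrations (Lemma~1(2)) to adjust fibrewise, the reflection of cartesian arrows along full and faithful cartesian functors (Lemma~1(5)) to verify that the chosen quasi-inverse $v$ is cartesian, and the usual Yoneda-style manipulation to close the loop through $(4)$ and $(5)$. Your route has the advantage of making transparent that $(1)\Rightarrow(3)$ is the only nontrivial step, and that it is precisely Lemma~1(5) that upgrades an ordinary quasi-inverse to a cartesian one; it also makes the use of the axiom of choice explicit, in keeping with the paper's standing conventions. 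The paper's approach buys brevity and places the result in its classical context. Both are sound; yours would serve well if the paper aimed to be more self-contained.
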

\begin{proof}
All is contained in \cite[Expos\'e VI]{Gr}.
\end{proof}
\begin{cor}
A map $u\colon F\to G$ of {\rm Fib}$(E)$
is a trivial fibration if and only if
for every $S\in Ob(E)$, 
$u_{S}\colon F_{S}\to G_{S}$ is 
a surjective equivalence.
\end{cor}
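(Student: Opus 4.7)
The plan is to deduce the corollary by combining Proposition 14 with the fibrewise criterion for isofibrations (Lemma 1(1)) and the observation from Section 2.1 that a functor is both an equivalence and an isofibration if and only if it is a surjective equivalence.

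For the forward direction, suppose $u$ is a trivial fibration, i.e., $u$ is simultaneously an $E$-equivalence and an isofibration. By Proposition 14, the assumption that $u$ is an $E$-equivalence yields that $u_{S}\colon F_{S}\to G_{S}$ is an equivalence of categories for every $S\in Ob(E)$. To upgrade this to a surjective equivalence, I would invoke Lemma 1(1): since $F$ is a fibration, the structure map $F\to E$ is in particular an isofibration (by Lemma 1(2)), and together with the hypothesis that the underlying functor of $u$ is an isofibration, this gives that each $u_{S}$ is an isofibration. A functor that is simultaneously an equivalence and an isofibration is a surjective equivalence, by the characterization recalled in Section 2.1.

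For the converse, suppose each $u_{S}$ is a surjective equivalence. Then each $u_{S}$ is in particular an equivalence of categories, so Proposition 14 (the implication (2) $\Rightarrow$ (1)) shows that $u$ is an $E$-equivalence. Moreover, each $u_{S}$ is an isofibration (a surjective equivalence being in particular an isofibration), and, again using that $F\to E$ is an isofibration by Lemma 1(2), Lemma 1(1) implies that the underlying functor of $u$ is an isofibration. Hence $u$ is a trivial fibration.

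There is no real obstacle here; the only subtle point worth emphasizing is that Lemma 1(1) requires $F\to E$ to be an isofibration in order to transfer the isofibration property between the total functor and its fibres, and this hypothesis is automatic in \textrm{Fib}$(E)$ by Lemma 1(2). The rest is a direct assembly of the cited results.
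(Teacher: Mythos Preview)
Your proof is correct and follows exactly the same approach as the paper, which simply cites Lemma 1((1) and (2)) and Proposition 14; you have merely spelled out the details of how these ingredients fit together.
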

\begin{proof}
This follows from Lemma 1((1) and (2)) 
and Proposition 14.
\end{proof}
For part (2) of the next result, let
$\mathscr{M}$ be a class of functors
that is contained in the class of injective 
on objects functors. In our applications
$\mathscr{M}$ will be the class of injective
on objects functors or the set consisting
of one of the inclusions $\ast\to J$.
Let $\mathscr{M}^{\perp}$
be the class of functors that have the 
right lifting property with respect 
to every element of $\mathscr{M}$.
\begin{prop}
Let $u\colon F\to G$ be a map of {\rm Fib}$(E)$. 

{\rm (1)} $u$ is an isofibration if and 
only if $u^{cart}$ is an isofibration. 

{\rm (2)} If $u$ has the right lifting property
with respect to the maps $f\times E_{/S}$,
where $f\in \mathscr{M}$ and $S\in Ob(E)$, 
then $u_{S}\in \mathscr{M}^{\perp}$ 
for every $S\in Ob(E)$.
\end{prop}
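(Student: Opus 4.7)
For part (1), I would use that isomorphisms in a fibration are automatically cartesian: if $\alpha\colon x\to x_{0}$ is an iso in $F$ and $\gamma\colon z\to x_{0}$, $g\colon f(z)\to f(x)$ satisfy $f(\gamma)=f(\alpha)g$, then $\delta=\alpha^{-1}\gamma$ is the unique lift. Hence every iso in $F$ lies in $F^{cart}$, and similarly for $G$. In the forward direction, an iso $\beta\colon u^{cart}(x)\to y$ in $G^{cart}$ is in particular an iso in $G$, so the isofibration $u$ lifts it to an iso $\alpha\colon x\to x_{0}$ in $F$; since $\alpha$ is iso it is cartesian, hence in $F^{cart}$. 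Conversely, any iso $\beta\colon u(x)\to y$ in $G$ is cartesian and therefore an iso in $G^{cart}$, so it lifts along the isofibration $u^{cart}$ to an iso in $F^{cart}$, which is also an iso in $F$ mapped to $\beta$ by $u$.

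For part (2), the plan is to convert a lifting problem in $CAT$
\begin{displaymath}
\xymatrix{
A \ar[r]^{a} \ar[d]_{f} & F_{S} \ar[d]^{u_{S}} \\
B \ar[r]_{b} & G_{S}
}
\end{displaymath}
with $f\in\mathscr{M}$ into a lifting problem for $u$ against $f\times E_{/S}$ in ${\rm Fib}(E)$. By the isomorphism (1) and the surjective equivalence (4), the composite
\begin{displaymath}
\mathbf{Cart}_{E}(A\times E_{/S},F)\cong[A,\mathbf{Cart}_{E}(E_{/S},F)]\to[A,F_{S}]
\end{displaymath}
induced by $(\mathsf{v}F)_{S}$ is a surjective equivalence, because $[A,-]$ preserves surjective equivalences (pick a section on objects and use full faithfulness on morphisms). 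I would first lift $a$ strictly to a cartesian functor $\bar{a}\colon A\times E_{/S}\to F$. Next, to construct $\bar{b}\colon B\times E_{/S}\to G$ with $\bar{b}\circ(f\times E_{/S})=u\circ\bar{a}$ and image $b$ in $[B,G_{S}]$, I would transpose the requirement to a lifting question in $CAT$
\begin{displaymath}
\xymatrix{
A \ar[r] \ar[d]_{f} & \mathbf{Cart}_{E}(E_{/S},G) \ar[d]^{(\mathsf{v}G)_{S}} \\
B \ar[r]_{b} & G_{S}
}
\end{displaymath}
whose top arrow is the transpose of $u\bar{a}$; since $(\mathsf{v}G)_{S}$ is a surjective equivalence (a trivial fibration in the Joyal--Tierney model structure on $CAT$) and $f$ is a cofibration there (being injective on objects), the lift exists. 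Applying the hypothesis on $u$ to the resulting square in ${\rm Fib}(E)$ produces a diagonal filler $d\colon B\times E_{/S}\to F$, and its image $c\colon B\to F_{S}$ under the surjective equivalence above is the desired filler; the identities $cf=a$ and $u_{S}c=b$ follow from $d\circ(f\times E_{/S})=\bar{a}$, $u\circ d=\bar{b}$, and the naturality of $\mathsf{v}$.

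The delicate point is arranging strict commutativity of the lifted square in ${\rm Fib}(E)$: a naive independent choice of $\bar{a}$ and $\bar{b}$ would only make the square commute up to isomorphism, since the maps in (4) are genuine equivalences rather than identities. The resolution is to fix $\bar{a}$ first and then construct $\bar{b}$ dependently on $\bar{a}$, exploiting that the trivial fibration $(\mathsf{v}G)_{S}$ has the right lifting property against the cofibration $f\in\mathscr{M}$.
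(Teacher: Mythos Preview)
Your proof is correct. For part~(1) you argue directly that isomorphisms in a fibration are cartesian, so the isomorphisms of $F$ and $F^{cart}$ coincide and the two lifting problems are literally the same; the paper instead reduces fibrewise via Lemma~1(1), using that $(F^{cart})_{S}$ is the maximal groupoid of $F_{S}$ and that a functor is an isofibration iff its maximal groupoid is one. Your argument is slightly more elementary and avoids the fibrewise detour.

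For part~(2) the two proofs are the same idea packaged differently. The paper regards the data as a single lifting problem in the projective model structure on the arrow category of $CAT$: the object $f\colon A\to B$ is cofibrant (being injective on objects), and the map $(\mathbf{Cart}_{E}(E_{/S},F)\to \mathbf{Cart}_{E}(E_{/S},G))\to(F_{S}\to G_{S})$ is an objectwise surjective equivalence, hence a trivial fibration; one lift produces both dotted arrows at once. You instead do this in two explicit stages---first lift $a$ using surjectivity on objects, then lift $b$ compatibly by solving a lifting problem of the cofibration $f$ against the trivial fibration $(\mathsf{v}G)_{S}$. Your staging is exactly what one obtains by unwinding the arrow-category lift, and your remark about the ``delicate point'' (that $\bar b$ must be built from $\bar a$ rather than chosen independently) is precisely why the paper's single arrow-category lift is convenient: it enforces that compatibility automatically.
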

\begin{proof}
(1) This is a consequence of Lemma 1(1)
and of the fact that for every $S\in Ob(E)$
and every object $F$ of {\rm Fib}$(E)$,
$(F^{cart})_{S}$ is the maximal groupoid
associated to $F_{S}$.

(2) Let $S\in Ob(E)$ and $A\to B$
be an element of $\mathscr{M}$.
Consider the commutative 
solid arrow diagram
\begin{displaymath}
\xymatrix@=4ex{
&&{{\bf Cart}_{E}(E_{/S},F)}\ar[dr]\ar[dd]\\
&&&{{\bf Cart}_{E}(E_{/S},G)}\ar[dd]\\
{A}\ar[rr]\ar[dr]\ar@{.>}[uurr]&&{F_{S}}\ar[dr]\\
&{B}\ar[rr]\ar@{.>}[uurr]&&{G_{S}}\\
}
\end{displaymath}
We recall that the category of arrows 
of $CAT$ is a model category in which
the weak equivalences and fibrations
are defined objectwise. A functor is
cofibrant in this model category if
and only if it is injective on objects.
If we regard the previous diagram
as a diagram in the category of 
arrows of $CAT$, then it has
by 2.3(5) and the assumption 
on $\mathscr{M}$ a diagonal filler,
the two dotted arrows. From
Section 2.3 and hypothesis
this diagonal filler has itself
a diagonal filler, hence the 
bottom face diagram has one.
\end{proof}
\begin{lem}
{\rm (1)}  Let
\begin{displaymath}
\xymatrix{
{F\times_{H}G}\ar[r] 
\ar[d]&{G} \ar[d]^{v}\\
{F} \ar[r]^{u}
&{H}
}
\end{displaymath}
be a pullback diagram in $CAT_{/E}$.
If $F,G$ and $H$ are fibrations,
$u$ and $v$ are cartesian functors
and $u$ is an isofibration, then 
$F\times_{H}G$ is a fibration
and the diagram is a pullback in 
{\rm Fib}$(E)$.

{\rm(2)} Let
\begin{displaymath}
\xymatrix{
{F}\ar[r]^{u}\ar[d]_{v}&{G}\ar[d]\\
{H} \ar[r]&{G\sqcup_{F}H}
}
\end{displaymath}
be a pushout diagram in $CAT_{/E}$.
If $F,G$ and $H$ are fibrations,
$u$ and $v$ are cartesian functors
and $u$ is injective on objects,
then $G\sqcup_{F}H$ is a fibration
and the diagram is a pushout in 
{\rm Fib}$(E)$.
\end{lem}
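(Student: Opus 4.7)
For part (1), my plan is to verify directly that the strict pullback $F\times_H G$ formed in $CAT_{/E}$ is a fibration over $E$ and that the resulting square is a pullback in {\rm Fib}$(E)$. The key construction is that of cartesian lifts. Given $(x,y)\in F\times_H G$ over $S\in Ob(E)$ and a map $f\colon T\to S$, choose cartesian arrows $\alpha\colon x'\to x$ in $F$ and $\beta\colon y'\to y$ in $G$ above $f$, available because $F$ and $G$ are fibrations. Since $u$ and $v$ are cartesian, $u(\alpha)$ and $v(\beta)$ are both cartesian arrows in $H$ above $f$ with common target $u(x)=v(y)$, so they differ by a unique vertical iso $\varphi\colon u(x')\to v(y')$ in $H_T$ with $v(\beta)\varphi=u(\alpha)$. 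The hypothesis that $u\colon F\to H$ is an isofibration lets me lift $\varphi$ to an iso $\psi\colon x'\to x''$ in $F$; then $\alpha\psi^{-1}\colon x''\to x$ remains cartesian in $F$ above $f$ and satisfies $u(\alpha\psi^{-1})=v(\beta)$, so $(\alpha\psi^{-1},\beta)$ is a morphism in $F\times_H G$ above $f$. A standard check, using cartesianness of each component and the universal property of the pullback in $CAT_{/E}$, shows this arrow is cartesian. The same observation---that any pair of cartesian arrows in $F$ and $G$ whose images coincide in $H$ yields a cartesian arrow in $F\times_H G$---gives cartesianness of the two projections, and cartesianness of the functor $K\to F\times_H G$ induced by any compatible pair of cartesian functors $K\to F$, $K\to G$ in {\rm Fib}$(E)$.

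For part (2), the strategy is parallel in spirit, but the pushout $P=G\sqcup_F H$ in $CAT_{/E}$ requires a more delicate analysis. Because $u$ is injective on objects, $Ob(P)=Ob(G)\sqcup_{Ob(F)}Ob(H)$ as sets, and the presentation of $P$ by generators and relations is well-controlled. I would construct cartesian lifts in $P$ case by case: if $[x]\in Ob(P)$ has a representative $x\in Ob(G)$, pick a cartesian $\alpha\colon x'\to x$ in $G$ above the given $f\colon T\to p([x])$ and let its image $[\alpha]$ in $P$ be the lift; symmetrically for $x\in Ob(H)$; when $[x]$ is the class of an object $a\in Ob(F)$, the cartesianness of $u$ and $v$ guarantees that the $G$- and $H$-lifts descend to the same arrow in $P$, so the prescription is unambiguous. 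To check that $[\alpha]$ is cartesian in $P$, I would analyse morphisms into $[x]$ via the zigzag description of arrows in the pushout, reducing existence and uniqueness of factorizations to those available in $F$, $G$, and $H$ and using cartesianness of $u$ and $v$ to absorb $F$-identifications into cartesian factorizations. Cartesianness of the inclusions $G\to P$ and $H\to P$ then follows, and the universal property in {\rm Fib}$(E)$ is automatic: any functor out of $P$ into a fibration whose restrictions to $G$ and $H$ are cartesian preserves the locally-$G$-or-$H$-cartesian arrows that generate the cartesian arrows of $P$.

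The main obstacle is in part (2): controlling morphisms in the pushout category with enough precision to recognize cartesianness against arbitrary arrows in $P$. The injectivity of $u$ on objects keeps the zigzag relations manageable, and the cartesianness of $u$ and $v$ provides the compatibility needed to align $F$-identifications with cartesian lifts in $G$ and $H$; these are exactly the ingredients that make the candidate lifts well defined and universal. Part (1), by contrast, is essentially formal once the isofibration property of $u$ is used to align $F$- and $G$-cartesians across $H$.
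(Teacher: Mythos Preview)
Your treatment of part~(1) matches the paper's: both produce cartesian lifts by choosing cartesian arrows in $F$ and $G$, comparing their images in $H$ via a vertical isomorphism, and using that $u$ is an isofibration to realign the $F$-lift so that the pair lands in the strict pullback. The paper leaves the verification that the resulting arrow is cartesian and that the projections and induced map are cartesian as an exercise, so your added detail is welcome but not a departure.

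For part~(2), however, the paper takes a genuinely different route and avoids the zigzag analysis entirely. After observing that $Ob(G\sqcup_F H)$ is identified with $Ob(H)\sqcup(Ob(G)\setminus \mathrm{Im}\,Ob(u))$ and that the structure map to $E$ is an isofibration, the paper does \emph{not} attempt to describe morphisms in the pushout. Instead it applies the functor $\Phi\mathsf{L}$ (Section~2.3) to the whole square, obtaining a cube whose bottom face is again a pushout and whose bottom vertices, lying in the image of $\Phi$, are automatically fibrations. The vertical arrows are equivalences because the unit $\mathsf{l}$ is fibrewise an equivalence; since $\Phi\mathsf{L}u$ is still injective on objects, \cite[Proposition~15.10.10(1)]{Hi} gives that $G\sqcup_F H \to \Phi\mathsf{L}G\sqcup_{\Phi\mathsf{L}F}\Phi\mathsf{L}H$ is an equivalence. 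Lemma~1(4) then promotes the isofibration $G\sqcup_F H\to E$ to a fibration, and Lemma~1(6) handles cartesianness of the inclusions and of the induced map out of the pushout, each time by comparing with the corresponding map on the $\Phi\mathsf{L}$ side.

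What this buys: the paper never needs to know what an arbitrary morphism in $G\sqcup_F H$ looks like. Your direct approach is not wrong in principle, but the step you flag as the main obstacle---checking cartesianness of your candidate lift against \emph{every} arrow into $[x]$, which in a pushout of categories means against arbitrary composable zigzags through $G$, $H$, and the identifications from $F$---is exactly the step you do not carry out, and it is not routine. The paper's indirect argument trades that combinatorial analysis for already-established facts about $\Phi$, $\mathsf{L}$, and the transfer lemmas~1(4) and~1(6).
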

\begin{proof}
(1) The objects of $F\times_{H}G$
are pairs $(x,y)$ with $x\in Ob( F),y\in Ob(G)$
such that $u(x)=v(y)$. We shall briefly
indicate how the composite map 
$F\times_{H}G\to F\overset{p}\to E$ 
is a fibration. Let $S\in Ob(E)$, 
$(x,y)\in F\times_{H}G$ and $f\colon 
S\to p(x)$. A cartesian lift of $f$ is obtained 
as follows. Let $y^{f}\to y$ and $x^{f}\to x$ 
be cartesian lifts of $f$.
Since $H$ is a fibration, $u$ and $v$ are
cartesian functors and $u$ is an isofibration,
there is $x_{0}^{f}\in Ob(F_{S})$
such that $x^{f}\cong x_{0}^{f}$
and $u(x_{0}^{f})=v(y^{f})$.
Then the obvious map 
$(x_{0}^{f},y^{f})\to (x,y)$
is a cartesian lift of $f$.
The universal property of the 
pullback is easy to see. 

(2) The set of objects of 
$G\sqcup_{F}H$ can be identified
with $Ob(H)\sqcup (Ob(G)\setminus
ImOb(u))$. Since the structure functors
$G\to E$ and $H\to E$ are 
isofibrations, one can easily
check that the canonical 
map $G\sqcup_{F}H\to E$
is an isofibration. We shall
use Lemma 1(4) to show that
it is a fibration. Consider the 
following cube in $CAT_{/E}$
\begin{displaymath}
\xymatrix@=2ex{
{F}
\ar[rr]^{u}\ar[dr] \ar[dd]
&& {G} \ar[drr] \ar'[d][dd]\\
& {H} \ar[rrr] \ar[dd]
&&& {G\sqcup_{F}H} \ar[dd]\\
{\Phi\mathsf{L}F} \ar'[r][rr] \ar[dr]
&& {\Phi\mathsf{L}G} \ar[drr]\\
& {\Phi\mathsf{L}H} \ar[rrr]
&&& {\Phi\mathsf{L}G
\sqcup_{\Phi\mathsf{L}F}\Phi\mathsf{L}H}
}
\end{displaymath}
(see Section 2.3 for the functors
$\Phi$ and $\mathsf{L}$). The
top and bottom faces are
pushouts and the vertical
arrows having sources $F,G$ and 
$H$ are weak equivalences.
The map $\Phi\mathsf{L}u$ is a 
cofibration since $u$ is one. 
By \cite[Proposition 15.10.10(1)]{Hi}
the map
\begin{displaymath}
G\sqcup_{F}H\to \Phi\mathsf{L}G
\sqcup_{\Phi\mathsf{L}F}\Phi\mathsf{L}H
\end{displaymath}
is a weak equivalence. Since
$\Phi$ is a left adjoint, the
target of this map is in the image 
of $\Phi$, hence it is a fibration.
It follows from Lemma 1(4)
that $G\sqcup_{F}H$ is a fibration.
The canonical maps $H\to G\sqcup_{F}H$
and $G\to G\sqcup_{F}H$ are cartesian
functors by Lemma 1(6) applied
to the front and right faces of the 
above cube diagram. Finally, it 
remains to prove that if 
\begin{displaymath}
\xymatrix{
{F}\ar[r]^{u}\ar[d]_{v}&{G}\ar[d]\\
{H} \ar[r]&{K}
}
\end{displaymath}
is a commutative diagram in 
{\rm Fib}$(E)$, then the resulting
functor $G\sqcup_{F}H\to K$ is 
cartesian. This follows from 
Lemma 1(6) applied to the diagram
\begin{displaymath}
\xymatrix{
{G\sqcup_{F}H}\ar[r]\ar[d]&
{\Phi\mathsf{L}G
\sqcup_{\Phi\mathsf{L}F}\Phi\mathsf{L}H}
\ar[d]\\
{K} \ar[r]&{\Phi\mathsf{L}K}
}
\end{displaymath}
\end{proof}
\begin{rem}
A consequence of Lemma 17(1) is that
the fibre category $(F\times_{H}G)_{S}$ is 
the pullback $F_{S}\times_{H_{S}}G_{S}$.
Thus, if $F,G$ and $H$ are fibred in groupoids
then so is $F\times_{H}G$. A consequence 
of Lemma 17(2) is that if $F,G$ and $H$ are 
fibred in groupoids then so is $G\sqcup_{F}H$.
\end{rem}
\begin{ex}
(1) Let $u\colon F\to G$ be a map
of {\rm Fib}$(E)$ and $H$ an
object of {\rm Fib}$(E)$. Then
the diagram
\begin{displaymath}
\xymatrix{
{F\times H}\ar[r]^{F\times u}
\ar[d]&{G\times H} \ar[d]\\
{F} \ar[r]^{u}
&{G}
}
\end{displaymath}
in a pullback in {\rm Fib}$(E)$.

(2) Let 
\begin{displaymath}
E_{/}\colon E\to CAT_{/E}
\end{displaymath}
be the functor which takes 
$S$ to $E_{/S}$. The functor $E_{/}$
preserves all the limits that
exist in $E$. Therefore, if
\begin{displaymath}
\xymatrix{
{U\times_{S}T}\ar[r] 
\ar[d]&{T} \ar[d]\\
{U} \ar[r]
&{S}
}
\end{displaymath}
is a pullback diagram in $E$, 
then
\begin{displaymath}
\xymatrix{
{E_{/U\times_{S}T}}\ar[r] 
\ar[d]&{E_{/T}} \ar[d]\\
{E_{/U}} \ar[r]
&{E_{/S}}
}
\end{displaymath}
is a pullback diagram in 
{\rm Fib}$(E)$.
\end{ex}
\begin{proof}[Proof of Theorem ~\ref{first}]
Axioms A1 and A3 from Definition 8
are clear. Axiom A2 was dealt with 
in Lemma 17. We prove Axiom 4.
Any map $u\colon F\to G$ of {\rm Fib}$(E)$
admits a factorization $u=vi\colon F\to H\to G$
in $CAT_{/E}$, where $i$ is injective on
objects and the underlying functor of $v$
is a surjective equivalence. By Lemma 1(2)
$H$ is an object of {\rm Fib}$(E)$. By Lemma 
1(5) $i$ is a cartesian functor. By
\cite[Expos\'e VI Proposition 5.3(i)]{Gr}
$v$ is a cartesian functor. Any 
commutative diagram
\begin{displaymath}
\xymatrix{
{F}\ar[r] \ar[d]_{u}
&{H} \ar[d]^{v}\\
{G} \ar[r]
&{K}
}
\end{displaymath}
{\rm Fib}$(E)$ in which 
the underlying functor
of $u$ is injective on objects
and $v$ is a trivial fibration
has a diagonal filler in $CAT_{/E}$.
By Lemma 1(5) (or 
\cite[Expos\'e VI Corollaire 5.4]{Gr}, 
for example) this diagonal filler
is a cartesian functor. Thus, the
first part of Axiom 4 is proved.
The rest of the Axiom 4 is
proved similarly, using 
Lemma 1((3) and (5)) and
\cite[Expos\'e VI Proposition 5.3(i)]{Gr}.

Properness is easy to see.
\end{proof}
\begin{rem}
Let $F$ be an object of {\rm Fib}$(E)$. 
Let $D2$ be the discrete category
with two objects. By cotensoring the 
sequence $D2\to J\to \ast$ with $F$ 
we obtain a natural factorization 
of the diagonal $F\to F\times F$ as
\begin{displaymath}
\xymatrix{
{F}\ar[r] &{F^{(J)}} \ar[r] & {F\times F}
}
\end{displaymath}
in which the map $F\to F^{(J)}$ is an 
$E$-equivalence and the map 
$F^{(J)}\to F\times F$ is an isofibration.
We obtain the following model for the 
mapping path factorization (Section 3.2)
of a map $u\colon F\to G$ of {\rm Fib}$(E)$. 
The objects of a fibre category $(Pu)_{S}$ 
are triples $(x,y,\theta)$ with 
$x\in Ob(F_{S}),y\in Ob(G_{S})$ and 
$\theta \colon y\to u(x)$ an isomorphism in 
$G_{S}$. The arrows are pairs of arrows 
making the obvious diagram commute.
\end{rem}
\begin{prop}[Compatibility with the $2$-category 
structure] Let $u\colon F\to G$ be a cofibration 
and $v\colon H\to K$ an isofibration. Then the 
canonical map
\begin{displaymath}
\xymatrix{
{{\bf Cart}_{E}(G,H)} \ar[r] &{{\bf Cart}_{E}(G,K)
\times_{{\bf Cart}_{E}(F,K)} {\bf Cart}_{E}(F,H)}
}
\end{displaymath}
is an isofibration that is a surjective 
equivalence if either $u$ or $v$ is an 
$E$-equivalence.
\end{prop}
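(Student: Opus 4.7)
The statement is the pushout–product (SM7) axiom for the $CAT$-enrichment of $\mathrm{Fib}(E)$ given by the two-variable adjunction (1). My plan is to translate it, via that adjunction, into a statement about pushout–products in $\mathrm{Fib}(E)$, and then reduce that to the already known pushout–product axiom for the Joyal–Tierney model structure on $CAT$.

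By the two-variable adjunction (1), for any cofibration $i\colon A\to B$ in $CAT$ the induced map
\[
\mathbf{Cart}_E(G,H)\to \mathbf{Cart}_E(G,K)\times_{\mathbf{Cart}_E(F,K)}\mathbf{Cart}_E(F,H)
\]
has the right lifting property with respect to $i$ if and only if the pushout–product
\[
u\mathbin{\hat\times} i\colon (G\times A)\sqcup_{F\times A}(F\times B)\longrightarrow G\times B
\]
has the left lifting property with respect to $v$ in $\mathrm{Fib}(E)$. Granting this adjoint translation, it suffices to prove the following claim: if $u$ is a cofibration in $\mathrm{Fib}(E)$ and $i$ is a cofibration in $CAT$, then $u\mathbin{\hat\times} i$ is a cofibration in $\mathrm{Fib}(E)$, and it is an $E$-equivalence if either $u$ or $i$ is a weak equivalence. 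Indeed, taking $i=\ast\to J$ (a trivial cofibration in $CAT$) yields the isofibration assertion because $u\mathbin{\hat\times} i$ becomes a trivial cofibration and $v$ is an isofibration; for the surjective equivalence assertion when $v$ is an $E$-equivalence (hence a trivial fibration by Corollary 15) one takes $i$ arbitrary and uses the cofibration part of the claim; when $u$ is an $E$-equivalence one uses the trivial-cofibration part.

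To prove the claim, the idea is to compute the pushout–product on underlying categories. Since $A\times F$ is, as a category, the ordinary cartesian product, the maps $F\times A\to G\times A$ and $F\times A\to F\times B$ are cartesian functors between fibrations, and $F\times A\to G\times A$ is injective on objects. Hence Lemma 17(2) applies and the pushout $(G\times A)\sqcup_{F\times A}(F\times B)$ formed in $\mathrm{Fib}(E)$ agrees with the pushout computed in $CAT_{/E}$, i.e.\ with the ordinary pushout of underlying categories. Consequently $u\mathbin{\hat\times} i$ has, as its underlying functor, the pushout–product in $CAT$ of the underlying functors of $u$ and $i$.

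Now the Joyal–Tierney model structure on $CAT$ is a cartesian monoidal model category: the pushout–product of two injective-on-objects functors is injective on objects, and is an equivalence of categories if one of the factors is. Applying this to the underlying functors and remembering that cofibrations and $E$-equivalences in $\mathrm{Fib}(E)$ are defined on underlying functors (by definition and by Proposition 14), one concludes the claim. The main point to verify carefully is the applicability of Lemma 17(2) in the pushout-product square, so that the pushout in $\mathrm{Fib}(E)$ really is the naive one; everything else is a formal consequence of the two-variable adjunction and of SM7 for $CAT$.
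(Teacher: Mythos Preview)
Your argument is correct and takes a genuinely different route from the paper's. The paper proves the isofibration assertion by using the \emph{cotensor} side of the adjunction (1): it translates the lifting problem against $\ast\to J$ into a lifting problem of $u$ against $H^{(J)}\to K^{(J)}\times_{K}H$, and then checks directly (via Corollary 15, fibrewise) that this latter map is a trivial fibration. For the surjective-equivalence assertions the paper does not use the two-variable adjunction at all: it simply invokes Proposition 14 to see that $\mathbf{Cart}_E(u,-)$ or $\mathbf{Cart}_E(-,v)$ are (surjective) equivalences, and then uses stability of surjective equivalences under pullback together with two-out-of-three.

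Your approach is more uniform: you work on the \emph{tensor} side, use Lemma 17(2) to identify the pushout in $\mathrm{Fib}(E)$ with the pushout of underlying categories, and then reduce everything to the SM7 axiom for the cartesian model structure on $CAT$. This is the standard ``pushout--product'' reduction, and in fact the paper itself adopts exactly this strategy later, in the proof of Proposition 38. What your approach buys is conceptual economy (one claim handles all three cases); what the paper's approach buys is that it avoids having to verify that the pushout--product in $\mathrm{Fib}(E)$ is computed in $CAT$, which is the one nontrivial point in your argument (and which you correctly justify via Lemma 17(2)).
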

\begin{proof}
By Section 2.3 the diagram
\begin{displaymath}
\xymatrix{
{\ast} \ar[r] \ar[d] &{{\bf Cart}_{E}(G,H)}\ar[d]\\
{J} \ar[r] &{{\bf Cart}_{E}(G,K)\times_{{\bf Cart}_{E}(F,K)} 
{\bf Cart}_{E}(F,H)}
}
\end{displaymath}
has a diagonal filler if and only if the diagram
\begin{displaymath}
\xymatrix{
{F} \ar[r] \ar[d] &{H^{(J)}}\ar[d]\\
{G} \ar[r] &{K^{(J)}\times_{K}H}
}
\end{displaymath}
has one (the pullback exists
by Lemma 17(1)).
The latter is true since the map 
$H^{(J)}\to K^{(J)}\times_{K}H$ is a 
trivial fibration using Corollary 15.
Suppose that $u$ is an $E$-equivalence.
By Proposition 14, the functors 
${\bf Cart}_{E}(u,H)$ and ${\bf Cart}_{E}(u,K)$ 
are surjective equivalences. Since
surjective equivalences are stable 
under pullback, the functor 
\begin{displaymath}
{\bf Cart}_{E}(G,H)\to{\bf Cart}_{E}(G,K)
\times_{{\bf Cart}_{E}(F,K)} {\bf Cart}_{E}(F,H)
\end{displaymath}
is an equivalence by the two
out of three property of equivalences.
Suppose that $v$ is an 
$E$-equivalence. Then ${\bf Cart}_{E}(F,v)$ 
and ${\bf Cart}_{E}(G,v)$ are equivalences
the functor
\begin{displaymath}
\xymatrix{
{{\bf Cart}_{E}(G,K)\times_{{\bf Cart}_{E}(F,K)} 
{\bf Cart}_{E}(F,H)} \ar[r]&{{\bf Cart}_{E}(G,K)}
}
\end{displaymath}
is an equivalence being the pullback 
of an equivalence along an isofibration. 
Therefore the canonical map is an 
equivalence.
\end{proof}
\begin{cor}
A map $u\colon F\to G$ is an isofibration
if and only if for every object $X$ of 
{\rm Fib}$(E)$, the map
$${\bf Cart}_{E}(X,u)\colon{\bf Cart}_{E}(X,F)
\to {\bf Cart}_{E}(X,G)$$
is an isofibration.
\end{cor}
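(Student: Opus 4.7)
My plan is to derive both implications from Proposition 21 together with the adjunction data of Section 2.3.

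For the \emph{only if} direction, I will specialize Proposition 21 to the cofibration $\emptyset\to X$, where $\emptyset$ denotes an initial object of {\rm Fib}$(E)$ guaranteed by Axiom A1 of Theorem 13; concretely this is the empty category mapping to $E$, so the inclusion $\emptyset\to X$ is trivially injective on objects, hence a cofibration. Since ${\bf Cart}_{E}(\emptyset,-)$ is terminal in $CAT$, the pullback appearing in Proposition 21 collapses and the canonical map becomes ${\bf Cart}_{E}(X,u)$ itself, which is therefore an isofibration whenever $u$ is.

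For the \emph{if} direction, my plan is to reduce via Lemma 1(1) to the claim that each $u_{S}\colon F_{S}\to G_{S}$ is an isofibration, which is admissible because the structure map of $F$ is already a fibration. To this end I will take $X=E_{/S}$ in the hypothesis and consider the commutative square whose horizontal arrows are ${\bf Cart}_{E}(E_{/S},u)$ and $u_{S}$ and whose vertical arrows are the surjective equivalences $(\mathsf{v}F)_{S}$ and $(\mathsf{v}G)_{S}$ of equation (4). A short diagram chase---lift a given isomorphism $\beta\colon u_{S}(x)\to y$ through the left-hand surjective equivalence to select a preimage $\widetilde{x}$ of $x$, then use fullness and essential surjectivity of the right-hand surjective equivalence to lift $\beta$ to an isomorphism $\widetilde{\beta}$ out of ${\bf Cart}_{E}(E_{/S},u)(\widetilde{x})$, then apply the isofibration property of ${\bf Cart}_{E}(E_{/S},u)$, and finally push the resulting isomorphism back down via $(\mathsf{v}F)_{S}$---yields the required lift of $\beta$.

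I do not anticipate a serious obstacle: the \emph{only if} direction is an immediate specialization of Proposition 21, and the \emph{if} direction is a routine surjective-equivalence chase enabled by equation (4) together with Lemma 1(1). The only mild subtlety is keeping track of the various lifts in the diagram chase, but this is bookkeeping rather than substance.
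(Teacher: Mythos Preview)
Your proposal is correct and follows essentially the same route as the paper: the \emph{only if} direction via Proposition 21, and the \emph{if} direction by specializing to $X=E_{/S}$, using the surjective equivalences $(\mathsf{v}F)_{S},(\mathsf{v}G)_{S}$ of 2.3(5), and reducing to Lemma 1(1). The only difference is cosmetic: the diagram chase you outline is exactly the content of the lemma stated at the end of Section 2.1 (isofibrations transfer across a commutative square with surjective-equivalence horizontal arrows), which the paper invokes directly rather than unpacking.
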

\begin{proof}
One half is a consequence of Proposition 
21. The other half follows by putting
$X=E_{/S}$, where $S\in Ob(E)$,
and using  2.3(5), Lemma 1(1) 
and Section 2.1.
\end{proof}
\begin{cor}[Compatibility with the `internal hom']
Let $u:F\to G$ be a cofibration and
$v:H\to K$ an isofibration. Then the 
canonical map
\begin{displaymath}
\xymatrix{
{\mathsf{CART}(G,H)}\ar[r] &{\mathsf{CART}(G,K)
\times_{\mathsf{CART}(F,K)} \mathsf{CART}(F,H)}
}
\end{displaymath}
is an isofibration that is a trivial fibration
if either $u$ or $v$ is an $E$-equivalence.
\end{cor}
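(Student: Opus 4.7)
The plan is to reduce the statement to Proposition 21 fibrewise. By formula (2), the fibre of $\mathsf{CART}(X,Y)$ at $S\in Ob(E)$ is ${\bf Cart}_{E}(E_{/S}\times X,Y)$, and $\mathsf{CART}(X,Y)$ is a split fibration by the Grothendieck construction. The functor $E_{/S}\times -\colon {\rm Fib}(E)\to {\rm Fib}(E)$ preserves cofibrations (injectivity on objects is preserved by products) and preserves $E$-equivalences (equivalences of categories are preserved by finite products), so $E_{/S}\times u$ is a cofibration, and an $E$-equivalence whenever $u$ is. Denote the canonical map in the statement by $\varphi$. By naturality of the Grothendieck construction $\varphi$ is cartesian, and its fibre at $S$ is, after identifying fibres of pullbacks with pullbacks of fibres, exactly the map of Proposition 21 applied to the pair $(E_{/S}\times u, v)$. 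Consequently each fibre of $\varphi$ is an isofibration, and is a surjective equivalence whenever either $u$ or $v$ is an $E$-equivalence.

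Before promoting these fibrewise statements to global ones I must check that the codomain of $\varphi$ is a fibration in ${\rm Fib}(E)$ and that its fibres really are the strict pullbacks of the fibres of the three $\mathsf{CART}$'s involved. The three objects $\mathsf{CART}(G,K)$, $\mathsf{CART}(F,H)$, $\mathsf{CART}(F,K)$ are split fibrations, and the restriction maps between them are cartesian by naturality. The restriction $\mathsf{CART}(G,K)\to \mathsf{CART}(F,K)$ is itself an isofibration: applying Proposition 21 to the cofibration $E_{/S}\times u$ and the isofibration $K\to E$ (a fibration, hence an isofibration by Lemma 1(2)), and using that $E$ is terminal in ${\rm Fib}(E)$ so ${\bf Cart}_{E}(-,E)\cong \ast$, shows that it is fibrewise an isofibration, whence an isofibration by Lemma 1(1). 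Lemma 17(1) then guarantees that the pullback defining the codomain of $\varphi$ is computed in ${\rm Fib}(E)$ and that its fibre at $S$ is the strict pullback of the fibres of the three $\mathsf{CART}$'s.

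With these preparations in place the global statement is immediate: $\varphi$ is an isofibration by Lemma 1(1), since its source $\mathsf{CART}(G,H)$ is a fibration and its fibres are isofibrations; and it is a trivial fibration whenever $u$ or $v$ is an $E$-equivalence, by Corollary 15, since its fibres are then surjective equivalences. The only real obstacle is the bookkeeping: tracking that the canonical map $\varphi$ does restrict at each $S$ to precisely the Proposition 21 map for $(E_{/S}\times u, v)$, which amounts to unwinding the isomorphisms (1) and (3) that express how the Grothendieck construction intertwines with the tensor $-\times -$ and the internal hom ${\bf Cart}_{E}(-,-)$.
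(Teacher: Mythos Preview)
Your proof is correct and follows essentially the same route as the paper: both arguments use the fibrewise identification $\mathsf{CART}(X,Y)_{S}={\bf Cart}_{E}(E_{/S}\times X,Y)$ (this is equation~(3) in the paper, not~(2)), verify via Proposition~21 and Lemma~17(1) that the pullback exists in ${\rm Fib}(E)$ with the expected fibres (Remark~18), and then apply Proposition~21 to the pair $(E_{/S}\times u,\,v)$ fibre by fibre. Your write-up is more explicit than the paper's terse proof, but the strategy is the same.
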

\begin{proof}
The map $\mathsf{CART}(u,K)$ is
an isofibration by 2.3(3) and Proposition 
21, therefore the pullback in the displayed 
arrow exists by Lemma 17(1).
The result follows from Remark 18, 2.3(3) 
and Proposition 21 applied to $v$ and the 
cofibration $E_{/S}\times u$, $S\in Ob(E)$.
\end{proof}
We recall \cite[Theorem 4]{JT} that the 
category $[E^{op},CAT]$ is a model 
category in which a map is a weak
equivalence or cofibration if it is
objectwise an equivalence of categories
or objectwise injective on objects. We 
denote this model category by 
$[E^{op},CAT]_{inj}$. We recall that the 
category $[E^{op},CAT]$ is a model 
category in which a map is a weak
equivalence or fibration if it is
objectwise an equivalence of categories
or objectwise an isofibration. We 
denote this model category by 
$[E^{op},CAT]_{proj}$. 
The identity functors form a Quillen
equivalence between $[E^{op},CAT]_{proj}$
and $[E^{op},CAT]_{inj}$.

Recall from Section 2.3 the adjoint pairs
$(\Phi,\mathcal{S})$ and $(\mathsf{L},\Phi)$.
\begin{prop}
The adjoint pair $(\Phi,\mathcal{S})$
is a Quillen equivalence between {\rm Fib}$(E)$
and $[E^{op},CAT]_{inj}$.
The adjoint pair $(\mathsf{L},\Phi)$ is a
Quillen equivalence between {\rm Fib}$(E)$
and $[E^{op},CAT]_{proj}$.
\end{prop}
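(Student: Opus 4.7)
The plan is to handle both adjunctions by the same strategy: first verify that $\Phi$ preserves the relevant classes so each pair is a Quillen pair, then show that the unit and counit of each adjunction are weak equivalences on every object, which immediately gives the Quillen equivalence condition. The key technical inputs are already in place: formula (4), which says $(\mathsf{v}F)_{S}\colon {\bf Cart}_{E}(E_{/S},F)\to F_{S}$ is a surjective equivalence; the statement that the unit $(\mathsf{l}F)_{S}\colon F_{S}\to \mathsf{L}F(S)$ is an equivalence; Proposition 14, which reduces $E$-equivalences to objectwise equivalences on fibres; and Lemma 1(1), which reduces isofibrations in ${\rm Fib}(E)$ to objectwise isofibrations on fibres.

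For the pair $(\Phi,\mathcal{S})$ with $[E^{op},CAT]_{inj}$, I would first observe that for any morphism $\alpha$ of $[E^{op},CAT]$ one has $(\Phi\alpha)_{S}=\alpha_{S}$. Combined with Proposition 14 this shows $\Phi$ sends objectwise equivalences to $E$-equivalences, and the description of objects of the Grothendieck construction shows $\Phi$ sends objectwise injective on objects maps to injective on objects maps; thus $\Phi$ is a left Quillen functor. For the Quillen equivalence, the counit $\mathsf{v}X\colon \Phi\mathcal{S}X\to X$ has fibre $(\mathsf{v}X)_{S}$ equal to the surjective equivalence of formula (4), so it is an $E$-equivalence by Proposition 14. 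For the unit $\eta_{F}\colon F\to \mathcal{S}\Phi F$, the triangle identity $\mathsf{v}\Phi F\circ \Phi\eta_{F}=1_{\Phi F}$ exhibits $(\eta_{F})_{S}$ as a one-sided inverse of the surjective equivalence $(\mathsf{v}\Phi F)_{S}$, hence as an equivalence; so $\eta_{F}$ is objectwise an equivalence, i.e.\ a weak equivalence in $[E^{op},CAT]_{inj}$. Since every object of $[E^{op},CAT]_{inj}$ is cofibrant and every object of ${\rm Fib}(E)$ is fibrant, a map $\Phi F\to X$ is a weak equivalence iff its adjunct $F\to \mathcal{S}X$ is, by two out of three applied along the unit and the preservation/reflection of weak equivalences by $\Phi$.

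For the pair $(\mathsf{L},\Phi)$ with $[E^{op},CAT]_{proj}$, I would check that $\Phi$ is a right Quillen functor: it still sends objectwise equivalences to $E$-equivalences by the same fibrewise identity $(\Phi\alpha)_{S}=\alpha_{S}$ together with Proposition 14, and it sends objectwise isofibrations to isofibrations in ${\rm Fib}(E)$ by Lemma 1(1), since both $\Phi F\to E$ and $\Phi G\to E$ are fibrations, hence isofibrations. The unit $\mathsf{l}F\colon F\to \Phi\mathsf{L}F$ for $F\in {\rm Fib}(E)$ is a fibrewise equivalence by the statement recalled in Section 2.3, hence an $E$-equivalence by Proposition 14. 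For the counit $\epsilon_{G}\colon \mathsf{L}\Phi G\to G$, the triangle identity $\Phi\epsilon_{G}\circ \mathsf{l}_{\Phi G}=1_{\Phi G}$ together with two out of three shows $\Phi\epsilon_{G}$ is an $E$-equivalence; then $(\Phi\alpha)_{S}=\alpha_{S}$ forces each $(\epsilon_{G})_{S}$ to be an equivalence, so $\epsilon_{G}$ is a weak equivalence in $[E^{op},CAT]_{proj}$. The Quillen equivalence then follows by the same two out of three argument as above, using that every object of ${\rm Fib}(E)$ is cofibrant and every object of $[E^{op},CAT]_{proj}$ is fibrant.

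I do not expect a serious obstacle: everything reduces to the two fibrewise comparison maps recalled in Section 2.3 and the three structural inputs Proposition 14, Lemma 1(1), and the explicit description of $\Phi$ on objects and arrows. The only point that requires any care is the passage from ``$\Phi\epsilon_{G}$ is a weak equivalence in ${\rm Fib}(E)$'' to ``$\epsilon_{G}$ is a weak equivalence in $[E^{op},CAT]_{proj}$'', which is handled by the observation that $\Phi$ both preserves and reflects weak equivalences, again via the equality of $(\Phi\alpha)_{S}$ with $\alpha_{S}$.
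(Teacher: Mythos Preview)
Your proposal is correct and follows essentially the same route as the paper: verify that $\Phi$ preserves (and reflects) weak equivalences and the appropriate class of cofibrations/fibrations, then use that the counit $\mathsf{v}F$ (for the first pair) and the unit $\mathsf{l}F$ (for the second pair) are fibrewise equivalences. The paper's argument is simply a terser version of yours, invoking the standard criterion that a Quillen pair with weak-equivalence-reflecting left/right adjoint and weakly invertible counit/unit is a Quillen equivalence; your additional verification of the other (co)unit via the triangle identities is redundant but harmless.
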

\begin{proof}
The functor $\Phi\colon [E^{op},CAT]_{inj}
\to {\rm Fib}(E)$ preserves and reflects 
weak equivalences and preserves cofibrations.
Since the map $\mathsf{v}F$ 
is a weak equivalence (2.3(5)), the pair 
$(\Phi,\mathcal{S})$ is a Quillen 
equivalence. 

The functor $\Phi\colon [E^{op},CAT]_{proj}
\to {\rm Fib}(E)$ preserves fibrations.
Since the map $\mathsf{l}F$ 
is a weak equivalence, the pair 
$(\mathsf{L},\Phi)$ is a Quillen 
equivalence.
\end{proof}
Let $m\colon A\to B$ be a category
fibred in groupoids. Recall from 
Section 2.3 that the functor $m_{\bullet}^{fib}
\colon{\rm Fib}(B)\to {\rm Fib}(A)$
has a left adjoint $m^{\bullet}$.
The proof of the next result is straightforward.
\begin{prop}
Let $m\colon A\to B$ be a category
fibred in groupoids. The adjoint
pair $(m^{\bullet},m_{\bullet}^{fib})$
is a Quillen pair.
\end{prop}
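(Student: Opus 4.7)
The plan is to check that the left adjoint $m^{\bullet}$ preserves cofibrations and trivial cofibrations; by Section 3.5 this is equivalent to $(m^{\bullet},m_{\bullet}^{fib})$ being a Quillen pair. Since both classes are defined via the underlying functor in $CAT$, almost all of the work reduces to observing that $m^{\bullet}$ leaves the underlying data unchanged.

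The central observation is that $m^{\bullet}$ is \emph{composition with $m$}: on objects it replaces the structure map $p\colon F\to A$ by the composite $mp\colon F\to B$, and on morphisms it leaves the underlying functor of any cartesian functor $u\colon F\to G$ untouched. By Definition 12 together with Theorem 13, both cofibrations (maps that are injective on objects) and $E$-equivalences (equivalences of categories on the underlying functor) are properties depending only on the underlying functor in $CAT$. Hence they are automatically preserved by $m^{\bullet}$, and so is their intersection, the class of trivial cofibrations.

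The only thing left to verify is that $m^{\bullet}$ indeed lands in {\rm Fib}$(B)$, that is, that $mp$ is a Grothendieck fibration and that $u$, viewed over $B$, remains a cartesian functor. The first assertion is standard: the composite of two fibrations is a fibration. For the second one uses the standing hypothesis on $m$: by the usual composition law for fibrations, an arrow $\phi$ of $F$ is $mp$-cartesian if and only if $\phi$ is $p$-cartesian and $p(\phi)$ is $m$-cartesian. Since $m$ is fibred in groupoids, \emph{every} arrow of $A$ is $m$-cartesian, so the second clause is automatic; the $p$-cartesian and $mp$-cartesian arrows of $F$ therefore coincide, and likewise for $G$. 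Consequently any $u$ that preserves cartesian arrows over $A$ preserves them over $B$.

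There is no genuine obstacle: the proposition reduces to unwinding the definitions and invoking the characterization of categories fibred in groupoids as precisely those fibrations in which every arrow is cartesian. The only place where the hypothesis ``$m$ is fibred in groupoids'' is actually used is in identifying the cartesian arrows after change of base.
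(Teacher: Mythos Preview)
Your proof is correct and is precisely the direct verification the paper has in mind: the paper simply records that the result is ``straightforward'' and omits the argument, and what you have written is exactly the natural way to fill that in. In particular, your identification of the one place where the hypothesis that $m$ is fibred in groupoids enters---to ensure that $p$-cartesian and $mp$-cartesian arrows coincide so that $m^{\bullet}$ lands in {\rm Fib}$(B)$ with cartesian functors as morphisms---is the only nontrivial point, and you have handled it correctly.
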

Let $f\colon T\to S$ be a map 
of $E$. The functor $f_{\bullet}^{fib}
\colon{\rm Fib}(E_{/S})\to {\rm Fib}(E_{/T})$
has a left adjoint $f^{\bullet}$.
\begin{cor}
Let $f\colon T\to S$ be a map 
of $E$. The adjoint pair 
$(f^{\bullet},f_{\bullet}^{fib})$
is a Quillen pair.
\end{cor}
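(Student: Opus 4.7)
The plan is to reduce Corollary 27 directly to Proposition 26 by exhibiting the map $f\colon T\to S$ as giving rise to a category fibred in groupoids to which the proposition applies, and then to identify the resulting Quillen pair with the one named in the corollary.

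First I would produce the relevant fibred-in-groupoids functor. Given $f\colon T\to S$ in $E$, post-composition with $f$ defines a functor $m\colon E_{/T}\to E_{/S}$. The fibre of $m$ over an object $g\colon V\to S$ of $E_{/S}$ is the (discrete) set of factorizations $V\to T$ of $g$ through $f$, regarded as a discrete category. Since each fibre is a groupoid, $m$ is a category fibred in groupoids over $E_{/S}$ (alternatively, one can check directly that every arrow of $E_{/T}$ is cartesian over $E_{/S}$, since a map $h\colon U\to V$ in $E_{/T}$ is determined by its image in $E_{/S}$ together with its codomain in $E_{/T}$). Here I would be using only the basic characterization of categories fibred in groupoids recalled at the end of Section 2.3.

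Next I would apply Proposition 26 to $m$. This immediately yields that the adjoint pair $(m^{\bullet},m_{\bullet}^{fib})$ is a Quillen pair between ${\rm Fib}(E_{/T})$ and ${\rm Fib}(E_{/S})$ for the natural generalized model structure constructed in Theorem 13. It remains to match notation: by the definitions in the subsection ``A change of base'' (with $B=E_{/S}$ and $A=E_{/T}$), $m_{\bullet}^{fib}$ sends $F\in {\rm Fib}(E_{/S})$ to $F\times_{E_{/S}}E_{/T}$, which is precisely $f_{\bullet}^{fib}(F)$ by the definition of the latter; and $m^{\bullet}$, being the left adjoint given by composition with $m$, agrees with $f^{\bullet}$ by uniqueness of adjoints. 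Thus $(f^{\bullet},f_{\bullet}^{fib})$ is a Quillen pair, as claimed.

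I do not expect any serious obstacle here: the corollary is essentially a relabelling of Proposition 26 in the special case where the structure functor is induced by a single arrow $f$ of $E$. The only point that might need a brief verification is that post-composition $E_{/T}\to E_{/S}$ is fibred in groupoids, but this is immediate from the discreteness of its fibres.
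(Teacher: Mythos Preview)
Your proposal is correct and matches the paper's approach exactly. The paper states the corollary without proof, as an immediate consequence of the preceding proposition applied to the post-composition functor $E_{/T}\to E_{/S}$; you have simply filled in the verification that this functor is fibred in groupoids (which is indeed clear from the discreteness of its fibres, or equivalently from the fact that every arrow of $E_{/T}$ is cartesian over $E_{/S}$).
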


\section{The generalized model 
category for stacks over a site}
We briefly recall from 
\cite[Chapitre 0 D\'{e}finition 1.2]{Gi} 
the notion of site. Let $E$ be a category. 
A \emph{topology} on $E$ is an 
application which associates to each 
$S\in Ob(E)$ a non-empty collection 
$J(S)$ of sieves of $E_{/S}$. This 
data must satisfy two axioms. The 
elements of $J(S)$ are called 
\emph{refinements} of $S$. A \emph{site}
is a category endowed with a topology.

Every category $E$ has 
the \emph{discrete} topology (only 
$E_{/S}$ is a refinement of the object 
$S$) and the \emph{coarse} topology 
(every sieve of $E_{/S}$ is a refinement 
of $S$). Any other topology on $E$ is
`in between' the discrete one and the 
coarse one.

Let $E$ be a site. Let $\mathcal{C}$ 
be the collection of maps 
$R\subset E_{/S}$ of {\rm Fib}$(E)$, 
where $S$ ranges through $Ob(E)$ 
and $R$ is a refinement of $S$.

Since $CAT$ is a model category, 
the theory of homotopy fiber squares
\cite[Section 13.3.11]{Hi} is available.
\begin{defn}
A map $F\to G$ of {\rm Fib}$(E)$ 
\emph{has property} $P$ if for every 
element $R\subset E_{/S}$ of
$\mathcal{C}$, the diagram
\begin{displaymath}
\xymatrix{
{{\bf Cart}_{E}(E_{/S},F)}\ar[r] \ar[d]
&{{\bf Cart}_{E}(R,F)}\ar[d]\\
{{\bf Cart}_{E}(E_{/S},G)}\ar[r]
&{{\bf Cart}_{E}(R,G)}
}
\end{displaymath}
in which the horizontal arrows are the 
restriction functors, is a homotopy fiber 
square. The map $F\to G$  is a 
$\mathcal{C}$-\emph{local fibration}
if it is an isofibration and it has property 
$P$. An object $F$ of {\rm Fib}$(E)$ is 
$\mathcal{C}$-\emph{local} if the map 
$F\to E$ is a $\mathcal{C}$-local fibration.
The map $F\to G$ is a 
$\mathcal{C}$-\emph{local equivalence} 
if for all $\mathcal{C}$-local objects $X$, 
the map 
\begin{displaymath}
{\bf Cart}_{E}(u,X)\colon{\bf Cart}_{E}(G,X)\to 
{\bf Cart}_{E}(F,X)
\end{displaymath}
is an equivalence of categories. 
\end{defn}
It follows directly from Definition 27 
and a standard property of homotopy 
fiber squares that a $\mathcal{C}$-local
object is the same as a 
stack (=($E$-)\emph{champ}) 
in the sense of 
\cite[Chapitre II D\'efinition 1.2.1(ii)]{Gi}. 
\begin{ex}
We shall recall that `sheaves are stacks'.

Let $\widehat{E}$ be the category 
of presheaves on $E$ and $\eta$ be
the Yoneda embedding. Let 
$D\colon SET\to CAT$ denote the
discrete category functor; it
induces a functor $D\colon 
\widehat{E}\to [E^{op},CAT]$.
For every objects $X,Y$ of 
$\widehat{E}$
there is a natural isomorphism
\begin{displaymath}
{\bf Cart}_{E}(\Phi DX,\Phi DY)\cong
D{\rm Fib}(E)(\Phi DX,\Phi DY)
\end{displaymath}
The composite functor 
$\Phi D\colon\widehat{E}\to {\rm Fib}(E)$ 
is full and faithful, hence we obtain
a natural isomorphism
\begin{displaymath}
{\bf Cart}_{E}(\Phi DX,\Phi DY)\cong
D\widehat{E}(\Phi DX,\Phi DY)
\end{displaymath}
Let now $S\in Ob(E)$ and $R$ be a 
refinement of $S$. Let $R'$ be 
the sub-presheaf of 
$\eta(S)$ which corresponds
to $R$. Since $E_{/S}=\Phi D\eta(S)$ 
and $R=\Phi DR'$, the previous
natural isomorphism shows
that a presheaf $X$ on $E$ is a sheaf 
if and only if $\Phi DX$ is a stack. In 
particular, $\eta(S)$ is a sheaf if 
and only if $E_{/S}$ is a stack.
\end{ex}
\begin{thm}
\label{second}
There is a proper generalized model category 
{\rm Champ}$(E)$ on the category 
{\rm Fib}$(E)$ in which the weak 
equivalences are the $\mathcal{C}$-local 
equivalences and the cofibrations are the 
maps that are injective on objects. The fibrant 
objects of {\rm Champ}$(E)$ are the stacks.
\end{thm}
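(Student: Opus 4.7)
The plan is to apply Lemma 9 with $W'$ taken to be the class of $\mathcal{C}$-local equivalences of Definition 27, keeping the class $C$ of cofibrations from Theorem 13. First I verify the hypotheses of Lemma 9 on $W'$. That $W\subseteq W'$ follows from Proposition 14(4): any $E$-equivalence $u$ induces an equivalence ${\bf Cart}_{E}(u,X)$ for every $X\in {\rm Fib}(E)$, in particular for every stack $X$. Two out of three for $W'$ is inherited from two out of three for equivalences of categories by evaluating at each stack.

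By Lemma 9, it then remains to show (a) that every arrow of ${\rm Fib}(E)$ factorizes as a map in $C\cap W'$ followed by a map in $F'$ (where $F'$ is defined by the right lifting property against $C\cap W'$), and (b) that $C\cap W'$ is closed under codomain retracts. For (a) I would invoke the functorial stack completion of \cite[Chapitre II \S2]{Gi}: there is a functor $a\colon {\rm Fib}(E)\to {\rm Fib}(E)$ together with a natural bicovering map $F\to aF$ into a stack, built by a transfinite construction driven by the arrows $R\subset E_{/S}$ in $\mathcal{C}$. Combining $a$ with the factorization of Theorem 13 produces, for any $u\colon F\to G$, a factorization through a $\mathcal{C}$-locally equivalent cofibrant intermediate whose projection to $G$ lies in $F'$; the key input is a special property of bicovering maps (alluded to in the introduction) that both keeps the construction inside ${\rm Fib}(E)$ and ensures the projection genuinely has the right lifting property against all of $C\cap W'$, not merely against a generating subclass. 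Part (b) is then straightforward.

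Next, I would identify the fibrant objects with the stacks. If $X$ is a stack, then Corollary 23 applied to a cofibration of the form $R\hookrightarrow E_{/S}$, together with property $P$, yields that $X\to E$ has the right lifting property against $C\cap W'$, so $X$ is fibrant. Conversely, if $X$ is fibrant, then testing $X\to E$ against trivial cofibrations obtained by combining the inclusions $R\hookrightarrow E_{/S}$ with the map $D2\to J$ translates, via the adjunctions of Section 2.3 and Corollary 22, into the restriction functor ${\bf Cart}_{E}(E_{/S},X)\to {\bf Cart}_{E}(R,X)$ being a surjective equivalence of categories, which is exactly Giraud's condition for $X$ to be a stack. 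Left properness of ${\rm Champ}(E)$ is automatic from left properness of the natural structure (Theorem 13) together with the fact noted in Section 3.4 that left Bousfield localization preserves left properness; right properness requires a short additional argument using pullback-stability of equivalences of categories in the squares defining property $P$.

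The main obstacle I anticipate is step (a). The category ${\rm Fib}(E)$ lacks many small colimits, so a direct appeal to the small object argument is unavailable, and the factorization must instead be produced through Giraud's explicit stack-completion construction. The delicate point is precisely the special property of bicovering maps: one needs it both to keep the transfinite construction inside ${\rm Fib}(E)$ and to ensure that the right factor produced has the correct lifting property against every element of $C\cap W'$, rather than merely against the arrows $R\hookrightarrow E_{/S}$ that drive the construction.
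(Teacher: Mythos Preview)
Your overall strategy---apply Lemma 9 and use Giraud's stack completion $\mathsf{A}$---matches the paper, but several steps have genuine gaps.

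\textbf{Factorization.} You correctly sense that a ``special property of bicovering maps'' is the crux, but you neither state it nor use it correctly. The property is Lemma 33: bicovering maps are stable under pullback along isofibrations. The paper's factorization of $u\colon F\to G$ is explicit: form the naturality square for $a$, factor $\mathsf{A}F\to\mathsf{A}G$ in ${\rm Fib}(E)$ as an $E$-equivalence $\mathsf{A}F\to H$ followed by an isofibration $H\to\mathsf{A}G$ (so $H$ is a stack by Proposition 30(2)), and pull back along $aG$. The map $H\to\mathsf{A}G$ is an isofibration between stacks, so by Lemma 31 it has the right lifting property against $C\cap W'$; hence so does its pullback $G\times_{\mathsf{A}G}H\to G$. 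Lemma 33 then shows $G\times_{\mathsf{A}G}H\to H$ is bicovering, whence the canonical map $F\to G\times_{\mathsf{A}G}H$ is bicovering; finally factor this last map in ${\rm Fib}(E)$. Lemma 33 is not about ``keeping the construction inside ${\rm Fib}(E)$'' (that is Lemma 17); its role is to make the first factor a $\mathcal{C}$-local equivalence. Also, Giraud's $\mathsf{A}$ is not a transfinite small-object construction driven by the sieves; it is built from $\mathsf{S}$ and sheafification.

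\textbf{Fibrant objects.} Your argument that stacks are fibrant is incorrect. Lifting $X\to E$ against the particular maps $R\hookrightarrow E_{/S}$ does not yield the right lifting property against all of $C\cap W'$; Corollary 23 concerns $\mathsf{CART}$, not ${\bf Cart}_{E}$, and property $P$ plays no role here. The paper's Lemma 31 instead uses the very \emph{definition} of $\mathcal{C}$-local equivalence: if $u\in W'$ and $X$ is a stack then ${\bf Cart}_{E}(u,X)$ is an equivalence, and combining this with Proposition 21 forces the map to the pullback to be a surjective equivalence, hence surjective on objects. For the converse (fibrant implies stack), your direct lifting approach would require knowing that the relevant pushout-products with $R\hookrightarrow E_{/S}$ lie in $C\cap W'$, which itself needs Lemma 33 via Corollary 34. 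The paper sidesteps this with the retract argument of Lemma 36: factor $aF\colon F\to\mathsf{A}F$ as a cofibration followed by a trivial fibration; fibrancy of $F$ produces a retraction exhibiting $F$ as a retract of a stack, hence a stack by Proposition 30(4).

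\textbf{Right properness.} This does not come from ``pullback-stability of equivalences of categories in the squares defining property $P$''. It is an immediate consequence of Lemma 33, since every fibration of ${\rm Champ}(E)$ is in particular an isofibration.
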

The proof of Theorem 29 will be 
given after some preparatory results.
\begin{prop}
{\rm (1)} Every $E$-equivalence is a 
$\mathcal{C}$-local equivalence.

{\rm (2)} The class of maps having property $P$ 
is invariant under $E$-equivalences.

{\rm (3)} The class of maps having property $P$ 
contains $E$-equivalences and all maps
between stacks.

{\rm (4)} The class of maps having property $P$ 
is closed under compositions, pullbacks 
along isofibrations and retracts.
\end{prop}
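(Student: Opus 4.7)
The argument splits into four pieces, each following quickly from the defining square of property $P$ once one uses Proposition 14(5) and the fact that the functors ${\bf Cart}_E(E_{/S},-)$ and ${\bf Cart}_E(R,-)$ preserve pullbacks along isofibrations in {\rm Fib}$(E)$ (by Lemma 17(1), and since they are right adjoints). Part (1) is immediate from Proposition 14(5): an $E$-equivalence $u$ induces equivalences of categories ${\bf Cart}_E(u,X)$ for every $X$, and in particular for every stack $X$.

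For (2), given a commutative square
\begin{displaymath}
\xymatrix{F \ar[r]^{a} \ar[d]_{u} & F' \ar[d]^{u'} \\ G \ar[r]^{b} & G'}
\end{displaymath}
with $a$ and $b$ being $E$-equivalences, apply ${\bf Cart}_E(E_{/S},-)$ and ${\bf Cart}_E(R,-)$ for a fixed refinement $R\subset E_{/S}$ to obtain a cube in $CAT$ in which the four edges parallel to $a,b$ are equivalences of categories. The defining squares for property $P$ of $u$ and $u'$ are the two opposite faces of this cube, so by invariance of homotopy fiber squares under pointwise equivalence, one is a homotopy fiber square iff the other is. For the $E$-equivalence case of (3) the same ingredient applies directly: both verticals in the defining square become equivalences, so the square is trivially a homotopy fiber square. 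For the maps-between-stacks case, when $F$ and $G$ are stacks the two horizontal arrows in the defining square are equivalences of categories by the characterization of stacks as $\mathcal{C}$-local objects (recalled after Definition 27), and again the square is a homotopy fiber square.

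For (4), closure under composition is the pasting law for homotopy fiber squares, applied horizontally to the defining squares. For the pullback of an isofibration $v\colon H\to K$ along a map $g\colon G\to K$, Lemma 17(1) says the pullback $H\times_K G$ in {\rm Fib}$(E)$ coincides with the strict pullback in $CAT_{/E}$; the hom functors ${\bf Cart}_E(E_{/S},-)$ and ${\bf Cart}_E(R,-)$ preserve it, and by Corollary 22 the resulting strict pullbacks in $CAT$ are taken along isofibrations, hence are also homotopy pullbacks. The defining square for the projection $H\times_K G\to G$ thus arises as a strict/homotopy pullback of the defining square for $v$; combining with the hypothesis that $v$ has property $P$ and the pullback stability of homotopy fiber squares along isofibrations, the pullback inherits property $P$. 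Closure under retracts is the standard fact that retracts of homotopy fiber squares are homotopy fiber squares.

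The main obstacle is the pullback-along-isofibrations clause of (4), where one must carefully track three different pullbacks—the strict one in {\rm Fib}$(E)$, the strict one in $CAT$ after applying the hom functors, and the homotopy pullback condition—using Corollary 22 as the bridge that keeps all isofibration hypotheses intact so that the strict and homotopy notions agree.
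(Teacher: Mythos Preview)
Your argument is correct in substance and matches the paper's approach, which simply appeals to Proposition 14 and standard properties of homotopy fiber squares; you have supplied the details the paper leaves implicit. Two small fixes are needed. In (1) you want Proposition 14(4), not 14(5): the definition of $\mathcal{C}$-local equivalence uses ${\bf Cart}_{E}(u,X)$, the contravariant direction. In the pullback clause of (4) you have the roles reversed. Following the convention of Lemma 33, ``closed under pullbacks along isofibrations'' means that the isofibration is the map one pulls back \emph{along}, while the map carrying property $P$ is the one being pulled back and need not itself be an isofibration; as written you assume $v$ is both, which only establishes a special case. Once you swap the labels---taking $g$ to be the isofibration and $v$ the map with property $P$---your argument goes through unchanged, since Corollary 22 applied to $g$ still makes the strict pullbacks ${\bf Cart}_{E}(X,H)\times_{{\bf Cart}_{E}(X,K)}{\bf Cart}_{E}(X,G)$ in $CAT$ into homotopy pullbacks, and the rest of your cube argument proceeds as you wrote it.
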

\begin{proof}
(1) follows from Proposition 14. (2) says 
that for every commutative diagram 
\begin{displaymath}
\xymatrix{
{F}\ar[r] \ar[d]_{u}
&{H} \ar[d]^{v}\\
{G} \ar[r]
&{K}
}
\end{displaymath}
in which the horizontal maps are $E$-equivalences,
$u$ has property $P$ if and only if $v$ has it. This is 
so by Proposition 14 and \cite[ Proposition 13.3.13]{Hi}.
(3) follows from a standard property of homotopy
fiber squares. (4) follows from standard properties 
of homotopy fiber squares and the fact that 
equivalences are closed under retracts.
\end{proof}
\begin{lem}
A map between stacks has the right lifting property 
with respect to all maps that are both cofibrations 
and $\mathcal{C}$-local equivalences if and only 
if it is an isofibration.
\end{lem}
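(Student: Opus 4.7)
The plan is to establish the two directions separately, with the nontrivial direction being that an isofibration between stacks lifts against every trivial cofibration in the localized sense.

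For the easy direction, suppose $f\colon F\to G$ between stacks has the right lifting property with respect to every cofibration that is a $\mathcal{C}$-local equivalence. By Proposition 30(1) every $E$-equivalence is a $\mathcal{C}$-local equivalence, so in particular $f$ has the right lifting property with respect to every trivial cofibration of the natural generalized model category of Theorem 13. Hence $f$ is an isofibration.

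For the harder direction, let $f\colon F\to G$ be an isofibration between stacks and let $i\colon A\to B$ be a cofibration that is a $\mathcal{C}$-local equivalence. To produce a diagonal filler for any square from $i$ to $f$, it suffices to show that the canonical functor
\begin{displaymath}
\xymatrix{
{\bf Cart}_E(B,F)\ar[r]^-{\varphi}& {\bf Cart}_E(B,G)\times_{{\bf Cart}_E(A,G)}{\bf Cart}_E(A,F)
}
\end{displaymath}
is surjective on objects; in fact I will show that $\varphi$ is a surjective equivalence. By Proposition 21, $\varphi$ is an isofibration, so it remains to show that $\varphi$ is an equivalence of categories. This will follow from the two out of three property applied to the factorization
\begin{displaymath}
{\bf Cart}_E(i,F)\colon {\bf Cart}_E(B,F)\xrightarrow{\ \varphi\ } {\bf Cart}_E(B,G)\times_{{\bf Cart}_E(A,G)}{\bf Cart}_E(A,F)\xrightarrow{\ \mathrm{pr}\ } {\bf Cart}_E(A,F),
\end{displaymath}
once we know that both ${\bf Cart}_E(i,F)$ and $\mathrm{pr}$ are equivalences.

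The functor ${\bf Cart}_E(i,F)$ is an equivalence because $F$ is a stack and $i$ is a $\mathcal{C}$-local equivalence (by the very definition of $\mathcal{C}$-local equivalence in Definition 27). For $\mathrm{pr}$, observe that it is the pullback of ${\bf Cart}_E(i,G)\colon {\bf Cart}_E(B,G)\to {\bf Cart}_E(A,G)$ along ${\bf Cart}_E(A,f)\colon {\bf Cart}_E(A,F)\to {\bf Cart}_E(A,G)$. The former is an equivalence, since $G$ is a stack and $i$ is a $\mathcal{C}$-local equivalence; the latter is an isofibration in $CAT$ by Corollary 22. The key technical input is now that the model category $CAT$ is right proper, so the pullback of an equivalence along an isofibration is an equivalence, giving that $\mathrm{pr}$ is an equivalence. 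The two out of three property in $CAT$ then yields that $\varphi$ is an equivalence, and being an isofibration it is a surjective equivalence; surjectivity on objects gives the required lift.

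The main obstacle is making sure that the pullback square of hom categories really yields an equivalence via right properness of $CAT$, which is why it is essential that ${\bf Cart}_E(A,f)$ be an isofibration (Corollary 22) and that $f$ be a map between \emph{stacks} (so the horizontal map ${\bf Cart}_E(i,G)$ is an equivalence). Once these two ingredients are in place, the rest is a routine two out of three argument.
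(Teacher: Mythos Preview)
Your proof is correct and follows essentially the same route as the paper's. The paper proves only the sufficiency direction explicitly, reducing the lifting problem to surjectivity on objects of the same canonical functor $\varphi$, noting it is an isofibration by Proposition~21, and then showing it is an equivalence by the two out of three property; the only cosmetic difference is that the paper observes ${\bf Cart}_E(i,G)$ is a \emph{surjective} equivalence (being also an isofibration by Proposition~21) and uses stability of surjective equivalences under arbitrary pullback, whereas you invoke right properness of $CAT$ together with ${\bf Cart}_E(A,f)$ being an isofibration (Corollary~22)---these are two phrasings of the same fact.
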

\begin{proof}[Proof (sufficiency)]
Let $H\to K$ be an isofibration 
between stacks and $F\to G$ 
a map that is both a cofibration 
and a $\mathcal{C}$-local 
equivalence. A commutative diagram
\begin{displaymath}
\xymatrix{
{F}\ar[r] \ar[d]
&{H} \ar[d]\\
{G} \ar[r]
&{K}
}
\end{displaymath}
has a diagonal filler if and 
only if the functor
\begin{displaymath}
{\bf Cart}_{E}(G,H)\to{\bf Cart}_{E}(G,K)
\times_{{\bf Cart}_{E}(F,K)} {\bf Cart}_{E}(F,H)
\end{displaymath}
is surjective on objects. We show
that it is a surjective equivalence.
The functor is an isofibration by 
Proposition 21. Hence it suffices
to show that it is an equivalence.
The maps ${\bf Cart}_{E}(G,K) \to
{\bf Cart}_{E}(F,K)$ and
${\bf Cart}_{E}(G,H) \to
{\bf Cart}_{E}(F,H)$
are surjective equivalences 
by assumption and Proposition 21. 
Since surjective equivalences are 
stable under pullback, the required 
functor is an equivalence by the two
out of three property of equivalences.
\end{proof}
For the notion of bicovering (=\emph{bicouvrant}) 
map in {\rm Fib}$(E)$ we refer the reader to
\cite[Chapitre II D\'efinition 1.4.1]{Gi}. As in 
[\emph{loc. cit.}, Chapitre II 1.4.1.1], we 
informally say that a map is bicovering if 
it is `locally bijective on arrows' and 
`locally essentially surjective on objects'.
\begin{ex}
For every $S\in Ob(E)$ and every 
refinement $R$ of $S$, $R\subset E_{/S}$ 
is a bicovering map.
\end{ex}
By \cite[Chapitre II Proof of Th\'eor\`eme 
d'existence 2.1.3]{Gi} there are a $2$-functor 
$\mathsf{A}\colon \mathscr{F}ib(E)\to \mathscr{F}ib(E)$
and a $2$-natural transformation $a:Id_{\mathscr{F}ib(E)}
\to \mathsf{A}$ such that $\mathsf{A}F$ is a stack and 
$aF$ is bicovering for every object $F$ of $\mathscr{F}ib(E)$.
By \cite[Chapitre II Corollaire 2.1.4]{Gi} the class
of bicovering maps coincides with the class of 
$\mathcal{C}$-local equivalences in the sense 
of Definition 27.
\begin{lem}
Bicovering maps are closed
under pullbacks along isofibrations.
\end{lem}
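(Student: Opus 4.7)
The plan is to verify directly that the projection $p \colon F \times_H G \to G$ is locally essentially surjective on objects and locally bijective on arrows, in the informal sense recalled just before the lemma, whenever $u \colon F \to H$ is bicovering and $v \colon G \to H$ is an isofibration. By Lemma~17(1) (applied with the roles of the two maps interchanged, since a pullback is symmetric) the pullback $F \times_H G$ exists in $\mathrm{Fib}(E)$, and by Remark~18 its fibre over $S$ is $F_S \times_{H_S} G_S$; so an object is a pair $(x,y)$ with $u(x)=v(y)$, a vertical morphism is a pair $(\alpha,\beta)$ with $u(\alpha)=v(\beta)$, and $p$ picks out the second component in each case.

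For local essential surjectivity on objects, fix $y \in G_S$. Applying the corresponding property of $u$ to $v(y) \in H_S$ produces a refinement $R$ of $S$ and, for each $T\to S$ in $R$, an object $x \in F_T$ together with an isomorphism $\alpha \colon u(x) \to v(y|_T)$ in $H_T$. This is where the isofibration hypothesis is used: by Lemma~1(1) the fibre functor $v_T \colon G_T \to H_T$ is an isofibration, so $\alpha^{-1}$ lifts along $v_T$ to an isomorphism $\beta \colon y|_T \to y'$ in $G_T$ with $v(y')=u(x)$. Then $(x,y') \in (F \times_H G)_T$ and $p(x,y') = y' \cong y|_T$.

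For local bijectivity on arrows, fix objects $(x_i,y_i) \in (F \times_H G)_S$, $i=1,2$, and a vertical morphism $\beta \colon y_1 \to y_2$ in $G_S$. A preimage of $\beta$ in the fibre of $p$ is exactly a morphism $\alpha \colon x_1 \to x_2$ in $F_S$ with $u(\alpha)=v(\beta)$. Applying local bijectivity on arrows of $u$ to the morphism $v(\beta) \colon u(x_1) \to u(x_2)$ in $H_S$ produces such an $\alpha$ locally on a refinement of $S$ and locally uniquely, which is precisely what is needed.

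I expect the essential surjectivity step to be the only delicate point: it is there that the isofibration hypothesis on $v$ is genuinely used, in order to transport an isomorphism living downstairs in $H$ to one living in $G$ over the same base object. The arrow-bijectivity step, by contrast, is a formal consequence of the corresponding property of $u$ and would go through for any pullback along a cartesian functor in $\mathrm{Fib}(E)$.
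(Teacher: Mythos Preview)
Your overall strategy matches the paper's: verify the two halves of ``bicovering'' directly, using the isofibration hypothesis only for the object half. Your essential-surjectivity argument is essentially the paper's Step~2.

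The gap is in the arrow step. When you say that local bijectivity of $u$ applied to $v(\beta)$ ``produces such an $\alpha$ locally'', you are restricting to some $f\colon T\to S$ in a cover and seeking $\alpha_T\colon x_1|_T\to x_2|_T$ in $F_T$ with $u(\alpha_T)=v(\beta|_T)$. But for non-split cartesian functors one only has canonical isomorphisms $u(x_i|_T)\cong u(x_i)|_T$ and $v(y_i|_T)\cong v(y_i)|_T$ in $H_T$, so the two sides of that equation need not even have the same source and target, and the pair $(\alpha_T,\beta|_T)$ you propose is not literally an element of $(F\times_H G)_T$. Your element-chase is only correct on the nose when the whole square is split. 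This is precisely why the paper organises the proof in three steps: Step~1 carries out your arrow argument under the extra hypothesis that the pullback square consists of split fibrations and split functors, phrasing it as the statement that the map of presheaves $\mathrm{Hom}_S((x,y),(x',y'))\to\mathrm{Hom}_S(y,y')$ on $E_{/S}$ is the pullback of $\mathrm{Hom}_S(x,x')\to\mathrm{Hom}_S(u(x),u(x'))$, and then invoking that bicovering maps of presheaves are stable under pullback \cite[Chapitre~0, 3.5.1]{Gi}; Step~3 reduces the general case to the split one via the functor $\mathsf{S}=\Phi\mathcal{S}$ and the trivial fibrations $\mathsf{S}F\to F$ of 2.3(5). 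So the arrow step is not quite as formal as your final paragraph suggests: you need either this reduction to the split case, or an explicit argument that the square of Hom-presheaves is a pullback up to the natural isomorphisms comparing cleavages.
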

\begin{proof}
Let 
\begin{displaymath}
\xymatrix{
{F\times_{H}G}\ar[r]^-{u'} 
\ar[d]&{G} \ar[d]^{v}\\
{F} \ar[r]^{u}
&{H}
}
\end{displaymath}
be a pullback diagram
in {\rm Fib}$(E)$ with
$v$ an isofibration
(see Lemma 17(1)).

\emph{Step 1.} Suppose that
the above pullback diagram 
is a pullback diagram
of split fibrations and split functors
with $v$ an arbitrary split functor and 
$u$ `locally bijective on arrows'. We prove 
that $u'$ is `locally bijective on arrows'.
Let $S\in Ob(E)$ and $(x,y),(x',y')$ be two
objects of $(F\times_{H}G)_{S}$. Then, 
in the notation of \cite[Chapitre I 2.6.2.1]{Gi} 
and the terminology of 
\cite[Chapitre 0 D\'efinition 3.5]{Gi}
we have to show that the map 
\begin{displaymath}
\xymatrix{
{{\rm Hom}_{S}((x,y),(x',y'))} 
\ar[r] &{{\rm Hom}_{S}(y,y')}
}
\end{displaymath}
of presheaves on $E_{/S}$ is 
bicovering, where $E_{/S}$
has the induced topology 
\cite[Chapitre 0 3.1.4]{Gi}.
This map is the pullback of the map
\begin{displaymath}
\xymatrix{
{{\rm Hom}_{S}(x,x')} 
\ar[r] &{{\rm Hom}_{S}(u(x),u(x'))}
}
\end{displaymath}
which is by assumption bicovering.
But bicovering maps of presheaves 
are stable under pullbacks  
\cite[Chapitre 0 3.5.1]{Gi}.

\emph{Step 2.}
Suppose that in the above 
pullback diagram the map $u$ 
is `locally essentially surjective 
on objects'. We prove that $u'$
is `locally essentially surjective 
on objects'. Let $S\in Ob(E)$
and $y\in Ob(G_{S})$.
Let $R'$ be the set of maps 
$f\colon T\to S$ such that 
there are $x\in Ob(F_{T})$ and
$y'\in Ob(G_{T})$ with
$u_{T}(x)=v_{T}(y')$ and 
$y'\cong f^{\ast}(y)$ in $G_{T}$.
We have to show that $R'$ is a 
refinement of $S$.
Let $R$ be the set of maps 
$f\colon T\to S$ such that 
there is $x\in Ob(F_{T})$
with $u_{T}x\cong f^{\ast}v_{S}(y)$ 
in $H_{T}$. By assumption
$R$ is a refinement of $S$.
Since $v_{T}f^{\ast}(y)\cong 
f^{\ast}v_{S}(y)$ we have $R'\subset R$.
Conversely, let $f\colon T\to S$ be
in $R$ and $x$ as above. Let $\xi$ 
be the isomorphism 
$u_{T}(x)\cong v_{T}f^{\ast}(y)$.
By assumption there are 
$y'\in Ob(G_{T})$ and 
an isomorphism $y'\cong f^{\ast}(y)$
in $G_{T}$ which is sent by
$v_{T}$ to $\xi$. In particular
$u_{T}(x)=v_{T}(y')$ and so
$R\subset R'$.

\emph{Step 3.} 
Suppose that in the above
pullback diagram the map 
$u$ is bicovering. We 
can form the cube diagram
\begin{displaymath}
\xymatrix@=2ex{
{\mathsf{S}F\times_{\mathsf{S}H}\mathsf{S}G} 
\ar[rr]^-{(\mathsf{S}u)'} \ar[dr] \ar[dd]
&& {\mathsf{S}G} \ar[drr]^{\mathsf{S}v} \ar'[d][dd]\\
& {\mathsf{S}F} \ar[rrr]^{\mathsf{S}u} \ar[dd]
&&& {\mathsf{S}H} \ar[dd]\\
{F\times_{H}G} \ar'[r][rr] \ar[dr]
&& {G} \ar[drr]^{v}\\
& {F} \ar[rrr]^{u}
&&& {H}
}
\end{displaymath}
One clearly has $\mathsf{S}(F\times_{H}G)
\cong \mathsf{S}F\times_{\mathsf{S}H}\mathsf{S}G$.
By 2.3(5) the vertical arrows
of the cube diagram are trivial fibrations
and the map $\mathsf{S}v$ is
an isofibration. By Proposition 30(1) 
$\mathsf{S}u$ is bicovering, hence 
by Steps 1 and 2 the map 
$(\mathsf{S}u)'$ is bicovering, so 
$u'$ is bicovering.
\end{proof}
\begin{cor}
Let $F\in {\rm Fib}(E)$
and $u$ be a bicovering map.
Then $F\times u$ is a bicovering map.
\end{cor}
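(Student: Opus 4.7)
The plan is to express $F\times u$ as a pullback of the bicovering map $u$ along an isofibration, and then invoke Lemma 33 directly. Writing the bicovering map as $u\colon G\to H$, Example 19(1) provides the pullback square
\begin{displaymath}
\xymatrix{
{F\times G}\ar[r]^{F\times u} \ar[d] & {F\times H} \ar[d]^{\pi}\\
{G}\ar[r]^{u} & {H}
}
\end{displaymath}
in ${\rm Fib}(E)$, where the vertical arrows are the projections onto the second factor. Thus $F\times u$ is identified with the pullback of $u$ along $\pi\colon F\times H\to H$.

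The only thing that really needs verification is that $\pi$ is an isofibration. Since products in ${\rm Fib}(E)$ are the pullbacks $F\times_{E}H$ in $CAT$, the underlying functor of $\pi$ is the pullback in $CAT$ of the structure functor $F\to E$ along $H\to E$. Now $F\to E$ is a fibration by hypothesis, hence an isofibration by Lemma 1(2), and isofibrations in $CAT$ are defined by a right lifting property and are therefore stable under pullback. Consequently $\pi$ is an isofibration.

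With the square above recognized as the pullback of a bicovering map along an isofibration, Lemma 33 applies and yields at once that $F\times u$ is bicovering. I do not anticipate any real obstacle: the proof is a direct combination of Example 19(1) and Lemma 33, once the formal observation about the second-factor projection is in place.
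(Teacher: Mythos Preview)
Your proof is correct and follows exactly the paper's approach: the paper's proof is the one-line ``This follows from Example 19(1) and Lemma 33,'' and you have simply unpacked this by writing out the pullback square and verifying explicitly that the projection $\pi$ is an isofibration (which the paper leaves implicit, since it is immediate from the definition of the product in ${\rm Fib}(E)$ and Lemma 1(2)).
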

\begin{proof}
This follows from Example 19(1) 
and Lemma 33.
\end{proof}
The next result is the first part of 
\cite[Chapitre II Corollaire 2.1.5]{Gi},
with a different proof.
\begin{cor}
If $G$ is a stack then so is 
$\mathsf{CART}(F,G)$ for every 
$F\in {\rm Fib}(E)$.
\end{cor}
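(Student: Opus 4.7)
The plan is to reduce the stack condition on $\mathsf{CART}(F,G)$ to the $\mathcal{C}$-local property of $G$, using the tensor-hom adjunction that defines $\mathsf{CART}$ together with Corollary 34.

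First, I would observe that $\mathsf{CART}(F,G)$ is by construction fibred over $E$ (coming from a presheaf of categories on $E$ via the Grothendieck construction), so by Lemma 1(2) its structure map to $E$ is automatically an isofibration. Hence, to verify that $\mathsf{CART}(F,G)$ is $\mathcal{C}$-local in the sense of Definition 27, it remains only to check property $P$ of this structure map. Since $E$ is terminal in {\rm Fib}$(E)$, both ${\bf Cart}_{E}(E_{/S},E)$ and ${\bf Cart}_{E}(R,E)$ are terminal categories, and property $P$ reduces to the statement that for every refinement $R\subset E_{/S}$ in $\mathcal{C}$, the restriction functor
\[
{\bf Cart}_{E}(E_{/S},\mathsf{CART}(F,G))\longrightarrow {\bf Cart}_{E}(R,\mathsf{CART}(F,G))
\]
is an equivalence of categories.

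Next I would apply the natural equivalence ${\bf Cart}_{E}(-\times F,G)\simeq {\bf Cart}_{E}(-,\mathsf{CART}(F,G))$ (the adjunction defining $\mathsf{CART}$) on both sides, re-expressing the required condition as the assertion that the restriction functor
\[
{\bf Cart}_{E}(E_{/S}\times F,G)\longrightarrow {\bf Cart}_{E}(R\times F,G)
\]
induced by $i\times F\colon R\times F\subset E_{/S}\times F$, where $i\colon R\subset E_{/S}$ denotes the inclusion, is an equivalence.

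The concluding step is then immediate from the results already established. By Example 32 the inclusion $i$ is bicovering, so by Corollary 34 the map $i\times F$ is bicovering as well. Since bicovering maps coincide with $\mathcal{C}$-local equivalences (by \cite[Chapitre II Corollaire 2.1.4]{Gi}, recalled just before Lemma 33) and $G$ is a stack, Definition 27 gives precisely that the displayed restriction is an equivalence. I do not foresee any real difficulty; the whole content of the corollary is that the functor $\mathsf{CART}(F,-)$ sends the bicovering map $R\times F\to E_{/S}\times F$ to an equivalence of ${\bf Cart}$-categories, and this is just Corollary 34 combined with the defining property of a stack.
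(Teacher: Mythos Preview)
Your proof is correct and follows essentially the same route as the paper's: the paper's one-line proof cites equation (4) (the adjunction ${\bf Cart}_{E}(-\times F,G)\simeq {\bf Cart}_{E}(-,\mathsf{CART}(F,G))$), Example 32, and Corollary 34, which is exactly the chain you spell out in detail.
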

\begin{proof}
This follows from 2.3(4), Example 
32 and Corollary 34.
\end{proof}
\begin{lem}
An object of {\rm Fib}$(E)$ that has the right 
lifting property with respect to all maps that 
are both cofibrations and $\mathcal{C}$-local 
equivalences is a stack.
\end{lem}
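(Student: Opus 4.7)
The plan is to use Giraud's stack completion functor $\mathsf{A}$ together with the natural generalized model category from Theorem 13 to produce a bicovering cofibration out of $F$, against which the hypothesis supplies a retraction.

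More precisely, given $F\in {\rm Fib}(E)$ satisfying the hypothesis, I would consider the bicovering unit map $aF\colon F\to \mathsf{A}F$ and factorize it, using Theorem 13, as a cofibration $i\colon F\to F'$ followed by a trivial fibration $p\colon F'\to \mathsf{A}F$ in the natural generalized model category. By Proposition 30(1), $p$ is a $\mathcal{C}$-local equivalence, and since $aF=pi$ is bicovering, the two-out-of-three property (inherited from the two-out-of-three property of equivalences of categories via Definition 27) forces $i$ to be a $\mathcal{C}$-local equivalence as well. Thus $i$ is a cofibration and $\mathcal{C}$-local equivalence, so by hypothesis the identity $F\to F$ extends along $i$ to a retraction $r\colon F'\to F$ with $ri=1_F$.

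Next, I would argue that $F'$ is a stack, or rather that $F'\to E$ has property $P$. Since $\mathsf{A}F$ is a stack, the structure map $\mathsf{A}F\to E$ has property $P$; and $p\colon F'\to \mathsf{A}F$ has property $P$ because it is an $E$-equivalence (Proposition 30(3)). By closure under composition (Proposition 30(4)), the composite $F'\to E$ has property $P$. The square
\begin{displaymath}
\xymatrix{
{F}\ar[r]^{i}\ar[d] &{F'}\ar[r]^{r}\ar[d] &{F}\ar[d]\\
{E}\ar@{=}[r] &{E}\ar@{=}[r] &{E}
}
\end{displaymath}
with $ri=1_F$ exhibits $F\to E$ as a retract of $F'\to E$ in the arrow category, so $F\to E$ has property $P$ by Proposition 30(4). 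Finally, $F\to E$ is automatically an isofibration by Lemma 1(2), and hence it is a $\mathcal{C}$-local fibration, i.e., $F$ is a stack.

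The only mild subtlety I anticipate is making sure the two-out-of-three step for bicovering maps is legitimate (it follows immediately from Definition 27 together with the identification of bicovering maps with $\mathcal{C}$-local equivalences recorded after Example 32). Everything else is a direct assembly of the stack completion $(aF,\mathsf{A}F)$ from \cite[Chapitre II 2.1.3]{Gi}, the factorization axiom of Theorem 13, the hypothesis applied to $i$, and the closure properties of property $P$ from Proposition 30.
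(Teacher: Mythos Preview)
Your proof is correct and follows essentially the same route as the paper's: factorize $aF$ via Theorem 13, observe that the intermediate object is a stack, use the hypothesis to obtain a retraction, and conclude by closure of property $P$ under retracts. The only cosmetic difference is that the paper invokes Proposition 30(2) directly to see that $F'$ is a stack, whereas you reach the same conclusion via Proposition 30(3) and the composition clause of Proposition 30(4); you also make explicit the two-out-of-three step for $i$ and the isofibration check, both of which the paper leaves implicit.
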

\begin{proof}
Let $F$ be as in the statement of the Lemma. 
Using Theorem 13 we factorize the map  
$aF\colon F\to \mathsf{A}F$ 
as a cofibration $F\to G$ followed by a
trivial fibration $G\to {\rm A}F$.
By Proposition 30(2) $G$ is a stack. 
By hypothesis the diagram
\begin{displaymath}
\xymatrix{
{F}\ar @{=} [r] \ar[d]
&{F}\\
{G}\\
}
\end{displaymath}
has a diagonal filler, therefore $F$ 
is a retract of $G$. By Proposition 
30(4) $F$ is a stack. 
\end{proof}
\begin{proof}[Proof of Theorem ~\ref{second}]
We shall apply Lemma 9 to 
the natural generalized model category 
{\rm Fib}$(E)$ (Theorem 13). 
Since we have Proposition 30(1),
it only remains to prove that every map  
$F\to G$ of {\rm Fib}$(E)$ can be factorized 
as a map that is both a cofibration and a 
$\mathcal{C}$-local equivalence
followed by a map that has the right 
lifting property with respect to all maps that 
are both cofibrations and $\mathcal{C}$-local 
equivalences. Consider the diagram 
\begin{displaymath}
\xymatrix{
{F}\ar[r]^{aF} \ar[d]
&{\mathsf{A}F} \ar[d]\\
{G} \ar[r]^{aG}
&{\mathsf{A}G}
}
\end{displaymath}
We can factorize the map $\mathsf{A}F\to \mathsf{A}G$ 
as a map $\mathsf{A}F\to H$ that is an
$E$-equivalence followed by an isofibration
$H\to \mathsf{A}G$. By Proposition 30(2) $H$ is a stack, 
so by Lemma 31 the map $H\to \mathsf{A}G$  has the 
right lifting property with respect to all maps that are both 
cofibrations and $\mathcal{C}$-local equivalences. 
Therefore the pullback map $G\times_{\mathsf{A}G}H\to G$ 
has the right lifting property with respect to all maps that 
are both cofibrations and $\mathcal{C}$-local equivalences. 
By Lemma 33 the map $G\times_{\mathsf{A}G}H\to H$ 
is bicovering, therefore the canonical map 
$F\to G\times_{\mathsf{A}G}H$ is bicovering. We 
factorize it as a cofibration $F\to K$ followed by a trivial  
fibration $K\to G\times_{\mathsf{A}G}H$.
The desired factorization is $F\rightarrow K$ followed
by the composite $K\to G\times_{\mathsf{A}G}H\to G$.

The fact that the fibrant objects of {\rm Champ}$(E)$
are the stacks follows from Lemmas 31 and 36.
Left properness of {\rm Champ}$(E)$ is a 
consequence of the left properness of 
{\rm Fib}$(E)$ and right properness is a 
consequence of Lemma 33.
\end{proof}
\begin{prop}
Every fibration of {\rm Champ}$(E)$ is a 
$\mathcal{C}$-local fibration.
\end{prop}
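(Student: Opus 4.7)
The plan is to deduce, from the explicit factorization used in the proof of Theorem~29, that every fibration of {\rm Champ}$(E)$ is a retract of a manifest $\mathcal{C}$-local fibration, and then to apply the closure properties of Proposition~30.

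Concretely, let $u\colon F\to G$ be a fibration in {\rm Champ}$(E)$. First I would observe that $u$ is automatically an isofibration: since every $E$-equivalence is a $\mathcal{C}$-local equivalence by Proposition~30(1), every trivial cofibration of {\rm Fib}$(E)$ is a trivial cofibration of {\rm Champ}$(E)$, so $u$ lifts against all of them and is a fibration in {\rm Fib}$(E)$ by Theorem~13.

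Next I would factor $u$ exactly as in the proof of Theorem~29, obtaining $u=p\circ i$ where $i\colon F\to K$ is a trivial cofibration in {\rm Champ}$(E)$ and $p$ is the composite $K\to G\times_{\mathsf{A}G}H\to G$; here $K\to G\times_{\mathsf{A}G}H$ is a trivial fibration of {\rm Fib}$(E)$ and $H\to\mathsf{A}G$ is an isofibration between stacks. The key point is that $p$ is visibly a $\mathcal{C}$-local fibration: the map $H\to\mathsf{A}G$ is an isofibration between stacks, so it has property $P$ by Proposition~30(3); its pullback $G\times_{\mathsf{A}G}H\to G$, which exists in {\rm Fib}$(E)$ by Lemma~17(1), is again an isofibration with property $P$ by Proposition~30(4); composing with the trivial fibration $K\to G\times_{\mathsf{A}G}H$, which has property $P$ by Proposition~30(1) and~(3), yields $p$ with property $P$ by the composition clause of Proposition~30(4).

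Finally, the lifting of $u$ against $i$ produces a retraction exhibiting $u$ as a retract of $p$, and the retract clause of Proposition~30(4), together with the obvious stability of isofibrations under retracts, transfers $\mathcal{C}$-local fibration status from $p$ to $u$. I do not foresee a substantive obstacle: the argument is essentially a bookkeeping exercise, with the only subtlety being the consistent use of Lemma~17(1) to ensure that the relevant pullback lives inside {\rm Fib}$(E)$.
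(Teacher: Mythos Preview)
Your proposal is correct and follows essentially the same route as the paper: factor $u$ via the construction in the proof of Theorem~29, exhibit $u$ as a retract of the composite $K\to G\times_{\mathsf{A}G}H\to G$, and invoke Proposition~30(3) and (4) to transfer property~$P$. You supply a bit more detail (the explicit check that $u$ is an isofibration, and the decomposition of why the composite has property~$P$), but the argument is the same.
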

\begin{proof}
Let $F\to G$ be a fibration of {\rm Champ}$(E)$.
The argument used in the proof of Theorem 29
shows that $F\to G$ is a retract of the composite 
$K\to G\times_{\mathsf{A}G}H\to G$. We 
conclude by Proposition 30((3) and (4)).
\end{proof}
\begin{prop}[Compatibility with the $2$-category structure]
Let $u\colon F\to G$ be a cofibration 
and $v\colon H\to K$ a fibration in {\rm Champ}$(E)$. 
Then the canonical map
\begin{displaymath}
\xymatrix{
{{\bf Cart}_{E}(G,H)} \ar[r] &{{\bf Cart}_{E}(G,K)
\times_{{\bf Cart}_{E}(F,K)} {\bf Cart}_{E}(F,H)}
}
\end{displaymath}
is an isofibration that is a surjective 
equivalence if either $u$ or $v$ is a
$\mathcal{C}$-local equivalence.
\end{prop}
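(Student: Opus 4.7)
The plan is to reduce everything to Proposition~21 and to the identity ${\rm F}\cap {\rm W}={\rm F}'\cap {\rm W}'$ established in the proof of Lemma~9. The isofibration claim is immediate: the containment ${\rm C}\cap {\rm W}\subset {\rm C}\cap {\rm W}'$ forces ${\rm F}'\subset {\rm F}$, so $v$ is an isofibration in the sense of the natural generalized model category, and Proposition~21 applies directly. If $v$ is a $\mathcal{C}$-local equivalence, then $v$ is a trivial fibration in ${\rm Champ}(E)$, hence a trivial fibration in ${\rm Fib}(E)$ by the aforementioned identity; so $v$ is an $E$-equivalence and Proposition~21 again gives the conclusion.

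The substantive case is when $u$ is a $\mathcal{C}$-local equivalence, so that $u$ is a trivial cofibration in ${\rm Champ}(E)$. A functor in $CAT$ is a surjective equivalence if and only if it is a trivial fibration in the Joyal--Tierney model structure on $CAT$, that is, if and only if it has the right lifting property with respect to every map that is injective on objects. By the two-variable form of the adjunction~(1), this right lifting property for the canonical pullback-corner functor transposes to the following statement: for every cofibration $i\colon A\to B$ in $CAT$, the pushout-product map
\begin{displaymath}
w\colon A\times G\sqcup_{A\times F}B\times F \to B\times G
\end{displaymath}
has the left lifting property with respect to $v$. The pushout exists in ${\rm Fib}(E)$ by Axiom~A2, since $A\times u$ is a cofibration. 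Because $v$ is a fibration in ${\rm Champ}(E)$, it is then enough to show that $w$ is always a trivial cofibration in ${\rm Champ}(E)$.

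That $w$ is a cofibration is a routine set-theoretic check using that ${\rm Ob}(A\times F)={\rm Ob}(A)\times {\rm Ob}(F)$ and that cofibrations are the maps injective on objects. That $w$ is a $\mathcal{C}$-local equivalence follows from Corollary~34 together with left properness of ${\rm Champ}(E)$ (noted in Theorem~29): the maps $A\times u$ and $B\times u$ are $\mathcal{C}$-local equivalences by Corollary~34; the canonical map $B\times F\to A\times G\sqcup_{A\times F}B\times F$ is the pushout of $A\times u$ along the cofibration $i\times F$, hence is a $\mathcal{C}$-local equivalence by left properness; its composite with $w$ is $B\times u$, so the two-out-of-three property forces $w$ to be a $\mathcal{C}$-local equivalence as well. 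The main technical obstacle is justifying the transposition of lifting problems through the two-variable adjunction, namely matching hom-categories and cartesian natural transformations carefully using equation~(1) so that right lifting of the pullback-corner functor against injective-on-objects maps in $CAT$ becomes left lifting of the pushout-product $w$ against $v$ in ${\rm Fib}(E)$.
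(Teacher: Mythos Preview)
Your proof is correct and follows essentially the same route as the paper's: both reduce the isofibration claim and the case of $v$ a $\mathcal{C}$-local equivalence to Proposition~21, and both handle the case of $u$ a $\mathcal{C}$-local equivalence by transposing through the adjunction~(1) to the pushout-product map and invoking Corollary~34. Your write-up is simply more explicit where the paper is terse---in particular, the paper says only ``This follows, for example, from Corollary~34'' for the weak-equivalence part of $w$, while you spell out the left-properness and two-out-of-three argument (one could equally use closure of trivial cofibrations under pushout, since $A\times u$ is itself a trivial cofibration).
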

\begin{proof}
The first part is contained in 
Proposition 21 since every fibration
of {\rm Champ}$(E)$ is an isofibration.
If $v$ is a $\mathcal{C}$-local equivalence
then $v$ is a trivial fibration and 
the Proposition is contained in
Proposition 21. Suppose that $u$ is 
a $\mathcal{C}$-local equivalence.
By adjunction it suffices to prove
that for every injective on 
objects functor $A\to B$, the
canonical map
\begin{displaymath}
\xymatrix{
{A\times G\sqcup_{A\times F}B\times F}
\ar[r] & {B\times G}
}
\end{displaymath}
is a cofibration and a 
$\mathcal{C}$-local 
equivalence (the pushout
in the displayed arrow 
exists by Lemma 17(2)).
This follows, for example, 
from Corollary 34.
\end{proof}
\begin{cor}[Compatibility with 
the `internal hom']
Let $u\colon F\to G$ be a cofibration 
and $v\colon H\to K$ a fibration in 
{\rm Champ}$(E)$. Then the 
canonical map
\begin{displaymath}
\xymatrix{
{\mathsf{CART}(G,H)}\ar[r] &{\mathsf{CART}(G,K)
\times_{\mathsf{CART}(F,K)} \mathsf{CART}(F,H)}
}
\end{displaymath}
is a trivial fibration
if either $u$ or $v$ is a 
$\mathcal{C}$-local equivalence.
\end{cor}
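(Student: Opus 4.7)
The plan is to adapt the proof of Corollary 23 to the {\rm Champ}$(E)$ setting, replacing the fibrewise appeal to Proposition 21 by Proposition 39. Recall from (2) that the fibre of $\mathsf{CART}(X,Y)$ over $S\in Ob(E)$ is the category ${\bf Cart}_E(E_{/S}\times X, Y)$, so every $\mathsf{CART}$-claim will reduce fibrewise to a ${\bf Cart}_E$-claim about the cofibration $E_{/S}\times u$ and the {\rm Champ}$(E)$-fibration $v$.

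First I would verify that the pullback $\mathsf{CART}(G,K)\times_{\mathsf{CART}(F,K)}\mathsf{CART}(F,H)$ exists in {\rm Fib}$(E)$; this follows exactly as in the first sentence of the proof of Corollary 23, using Proposition 21 and 2.3(3) to see that $\mathsf{CART}(u,K)$ is an isofibration and then invoking Lemma 17(1).

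Next, since {\rm Champ}$(E)$ and the natural generalized model category of Theorem 13 share the same class of cofibrations, their classes of trivial fibrations coincide, both being the right orthogonal to the cofibrations. Thus it suffices to show that the canonical map $\theta$ into the pullback is a trivial fibration in the natural structure, and by Corollary 15 this amounts to checking that for every $S\in Ob(E)$ the fibre $\theta_S$ is a surjective equivalence. Combining (2) with Remark 18, the map $\theta_S$ is exactly
\[
{\bf Cart}_E(E_{/S}\times G, H) \to {\bf Cart}_E(E_{/S}\times G, K) \times_{{\bf Cart}_E(E_{/S}\times F, K)} {\bf Cart}_E(E_{/S}\times F, H),
\]
the canonical map of Proposition 39 associated to $E_{/S}\times u$ and $v$.

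Now $E_{/S}\times u$ is a cofibration, since tensoring by $E_{/S}$ preserves injectivity on objects; moreover, if $u$ is a $\mathcal{C}$-local equivalence then so is $E_{/S}\times u$ by Corollary 34, since the $\mathcal{C}$-local equivalences are exactly the bicovering maps. Proposition 39 applied to $E_{/S}\times u$ and $v$ then yields that $\theta_S$ is a surjective equivalence whenever one of $u$, $v$ is a $\mathcal{C}$-local equivalence, finishing the proof. I do not anticipate a serious obstacle: the argument is a fibrewise transcription of Corollary 23 with Proposition 39 replacing Proposition 21, and the only genuinely new ingredient is the stability result Corollary 34. The mildest subtlety is the identification of trivial fibrations across the two model structures, but this is automatic from the shared class of cofibrations.
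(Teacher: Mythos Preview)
Your argument is correct and is essentially the paper's approach: reduce fibrewise via 2.3(3) and apply the ${\bf Cart}_E$-compatibility for {\rm Champ}$(E)$ (this is Proposition~38 --- your ``Proposition~39'' is an off-by-one slip) to $E_{/S}\times u$ and $v$, invoking Corollary~34 when $u$ is a $\mathcal{C}$-local equivalence. The paper abbreviates the case where $v$ is a $\mathcal{C}$-local equivalence by observing that $v$ is then a trivial fibration and citing Corollary~23 directly, but this is the same argument you give, only packaged differently.
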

\begin{proof}
If $v$ is a $\mathcal{C}$-local equivalence
then $v$ is a trivial fibration and the Corollary
is Corollary 23. If $u$ is a $\mathcal{C}$-local 
equivalence the result follows from Proposition 38.
\end{proof}
\begin{prop}
The classes of bicoverings and isofibrations 
make {\rm Fib}$(E)$ a category of fibrant 
objects \cite{Br}.
\end{prop}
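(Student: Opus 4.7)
The plan is to verify Brown's axioms \cite{Br} for a category of fibrant objects on {\rm Fib}$(E)$, taking the isofibrations as fibrations and the bicovering maps as weak equivalences. The formal ingredients are immediate: the identity $E\to E$ is the terminal object; every structure map $F\to E$ is a fibration, hence an isofibration by Lemma 1(2), so every object is fibrant; isofibrations are closed under composition and contain the isomorphisms; and by \cite[Chapitre II Corollaire 2.1.4]{Gi} the bicoverings coincide with the $\mathcal{C}$-local equivalences, which are the weak equivalences of the generalized model category {\rm Champ}$(E)$ from Theorem 29, so they contain the isomorphisms and satisfy two-out-of-three. Pullbacks of isofibrations along arbitrary maps exist in {\rm Fib}$(E)$ and are again isofibrations by Lemma 17(1). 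A path object for any $F\in{\rm Fib}(E)$ is supplied by the factorisation $F\to F^{(J)}\to F\times F$ of Remark 20, whose first map is an $E$-equivalence, hence a bicovering by Proposition 30(1), and whose second map is an isofibration.

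The only substantive step is Brown's base-change axiom for trivial fibrations: a map $p\colon X\to Y$ that is both an isofibration and a bicovering must remain a bicovering after pullback along an arbitrary $f\colon Z\to Y$. Lemma 33 supplies this only when $f$ itself is an isofibration, so I would reduce to that case by factoring $f$ in {\rm Fib}$(E)$ via Theorem 13 as $f=qi$ with $i\colon Z\to Z'$ an $E$-equivalence (hence a bicovering by Proposition 30(1)) and $q\colon Z'\to Y$ an isofibration. A first application of Lemma 33 shows that the pullback of $p$ along $q$ is a bicovering isofibration $p'\colon X\times_{Y}Z'\to Z'$. A second application, to $p'$ and the bicovering $i$, shows that the pullback $p''\colon X\times_{Y}Z\to X\times_{Y}Z'$ of $p'$ along $i$ is a bicovering. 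Let $\pi\colon X\times_{Y}Z\to Z$ denote the remaining projection. Commutativity of the pullback square gives $i\pi=p'p''$, whose right-hand side is a composite of bicoverings and is therefore a bicovering; two-out-of-three applied to $i\pi$ together with the bicovering $i$ then forces $\pi$ to be a bicovering, as needed.

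This completes the verification of Brown's axioms. The main obstacle is the two-step application of Lemma 33 in the second paragraph, reducing base change along an arbitrary map to base change along an isofibration; everything else is a routine assembly of results already established in the paper.
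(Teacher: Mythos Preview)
Your proof is correct and essentially matches the paper's: the paper isolates the stability of bicovering isofibrations under arbitrary pullback as Lemma~41 and proves it by the very same factorisation $f=qi$ with $i$ an $E$-equivalence and $q$ an isofibration, then pulls back in two stages. The only cosmetic difference is that where you invoke Lemma~33 a second time (pulling back the bicovering $i$ along the isofibration $p'$), the paper instead notes that the same map is an $E$-equivalence by right properness of the Theorem~13 structure and then applies Proposition~30(1); also, your phrase ``the pullback $p''$ of $p'$ along $i$'' should read ``of $i$ along $p'$'' to match the map $X\times_Y Z\to X\times_Y Z'$ you actually use.
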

\begin{proof}
A path object was constructed in Remark 20. 
Since we have Lemma 17(1), we conclude by 
the next result.
\end{proof}
\begin{lem}
The maps that are both bicoverings 
and isofibrations are closed under pullbacks.
\end{lem}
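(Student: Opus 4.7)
The plan is to adapt the three-step argument of Lemma 33, with the roles of the two legs of the pullback interchanged. Fix a pullback square in {\rm Fib}$(E)$
\[
\xymatrix{
{X\times_{K}H}\ar[r]^-{u'}\ar[d]_{v'}&{H}\ar[d]^{v}\\
{X}\ar[r]_{u}&{K}
}
\]
in which $v\colon H\to K$ is both bicovering and an isofibration and $u\colon X\to K$ is arbitrary; by Lemma 17(1) the pullback exists in {\rm Fib}$(E)$ and agrees with the $CAT_{/E}$-pullback. The map $v'$ is an isofibration, since the class of isofibrations is defined by a right lifting property in $CAT$ and is therefore closed under pullback.

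For the bicovering property of $v'$, I first reduce to the split setting using the cube diagram of Step 3 of the proof of Lemma 33. Here $\mathsf{S}v$ is an isofibration by Corollary 22 and the argument of Proposition 24 (which shows that $\Phi\colon[E^{op},CAT]_{proj}\to {\rm Fib}(E)$ preserves fibrations), the vertical counit maps $\mathsf{v}(-)\colon\mathsf{S}(-)\to(-)$ are trivial fibrations by (5) hence bicovering, and $\mathsf{S}v$ is bicovering by two out of three (Theorem 29). Using the natural identification $\mathsf{S}(X\times_{K}H)\cong \mathsf{S}X\times_{\mathsf{S}K}\mathsf{S}H$, two out of three reduces the problem to showing that the pullback of $\mathsf{S}v$ along $\mathsf{S}u$ is bicovering in the split setting.

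In the split case, I verify the two defining conditions of a bicovering map separately, running the arguments of Steps 1 and 2 of Lemma 33 with the roles of the two pullback legs interchanged. For locally bijective on arrows: the Hom-presheaf map ${\rm Hom}_{T}((x_1,y_1),(x_2,y_2))\to {\rm Hom}_{T}(x_1,x_2)$ on $E_{/T}$ is the pullback of ${\rm Hom}_{T}(y_1,y_2)\to {\rm Hom}_{T}(v_T(y_1),v_T(y_2))$, which is bicovering by hypothesis on $v$, and bicovering presheaf maps are stable under pullback by \cite[Chapitre 0 3.5.1]{Gi}. For locally essentially surjective on objects: given $x\in Ob(X_S)$, applying the hypothesis on $v$ to $u_S(x)\in Ob(K_S)$ produces a refinement $R$ of $S$ such that for each $f\colon T\to S$ in $R$ there exist $y\in Ob(H_T)$ and an isomorphism $\xi\colon v_T(y)\to u_T f^{\ast}(x)$ in $K_T$; since $v$ is an isofibration, so is $v_T$ by Lemma 1(1), and lifting $\xi$ along $v_T$ gives an isomorphism $y\to y'$ in $H_T$ with $v_T(y')=u_T f^{\ast}(x)$, so $(f^{\ast}(x),y')\in Ob((X\times_{K}H)_T)$ and projects onto $f^{\ast}(x)$. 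The sieve of $f$ for which such a lift exists is closed under precomposition (apply cartesian lifts in $X\times_{K}H$) and contains $R$, so it is a refinement.

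The main obstacle is that Lemma 33 does not apply to this situation directly: it gives closure of bicoverings under pullback \emph{along} isofibrations, which in our setup would demand that the arbitrary map $u$ be itself bicovering or an isofibration. The key point which unlocks the proof is that the isofibration property of $v$ itself, namely of each fibre restriction $v_T\colon H_T\to K_T$ by Lemma 1(1), serves as a replacement for the isofibration property of the opposite leg at the crucial iso-lifting step of Step 2 of Lemma 33.
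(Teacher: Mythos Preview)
Your proof is correct. It is in fact the approach the paper explicitly mentions but does not write out: the first sentence of the paper's proof is ``A proof entirely similar to the proof of Lemma 33 can be given.'' You have supplied exactly that, rerunning the three-step argument with the roles of the two legs interchanged and observing that the isofibration hypothesis on $v$ (hence on each $v_T$ by Lemma 1(1)) furnishes the iso-lifting needed at the key point of Step 2.

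The paper's written proof takes a different and shorter route. Rather than redoing Steps 1--3, it uses Lemma 33 as a black box together with the factorization available from Theorem 13: factor the arbitrary leg as an $E$-equivalence $j$ followed by an isofibration $p$, and pull back in two stages. Pulling $u$ back along $p$ gives a bicovering isofibration by Lemma 33; pulling this back further along $j$ yields an $E$-equivalence by right properness of {\rm Fib}$(E)$. Two out of three for bicoverings (via Proposition 30(1)) then finishes. What the paper's argument buys is brevity and a demonstration that Lemma 41 is a formal consequence of Lemma 33 plus right properness; what your argument buys is self-containment and an explicit identification of where the extra isofibration hypothesis on $v$ is actually used (namely, only at the fibrewise iso-lifting in Step 2, not in Steps 1 or 3).
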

\begin{proof}
A proof entirely similar to the 
proof of Lemma 33 can be given. 
We shall give a proof that uses
Lemma 33. Let 
\begin{displaymath}
\xymatrix{
{F\times_{H}G}\ar[r]^-{u'} 
\ar[d]&{G} \ar[d]^{v}\\
{F} \ar[r]^{u}
&{H}
}
\end{displaymath}
be a pullback diagram
in {\rm Fib}$(E)$ with
$u$ both an isofibration 
and a bicovering map.
We factorize $v$ as
$v=pj\colon G\to K\to H$,
where $j$ is an $E$-equivalence
and $p$ is an isofibration
and then we take 
successive pullbacks.
The map $F\times_{H}K\to K$
is a bicovering map by 
Lemma 33 and an isofibration.
The map $F\times_{H}G\to F\times_{H}K$
is an $E$-equivalence. By Proposition 
30(1) the map $u'$ is bicovering.
\end{proof}
We give now, as Lemma 42, 
the analogues, in our context, 
of \cite[Lemma 2.2 and Remark 2.3]{Ho2}.

Let $E$ be a category. Let
\begin{displaymath}
\xymatrix{
{F}\ar[r]^{u} & {H} & {G}\ar[l]_{v}
}
\end{displaymath}
be a digram in {\rm Fib}$(E)$.
The discussion from Section 3.4
and Remark 20 suggest the following 
model for the homotopy pullback
of the previous diagram. The objects 
of the fibre category over $S\in Ob(E)$
are triples $(x,y,\theta)$ with 
$x\in Ob(F_{S}),y\in Ob(G_{S})$ 
and $\theta \colon u(x)\to v(y)$ an 
isomorphism in $H_{S}$. The arrows 
are pairs of arrows making the obvious 
diagram commute. This model 
is commonly known as the $2$-pullback
or the iso-comma object of $u$ and $v$
and from now on we shall designate it by 
$F\times_{H}^{h}G$.
\begin{lem}[Homotopy pullbacks 
in {\rm Champ}$(E)$]
Suppose that $E$ is a site.

{\rm (1)} If $F,G$ and $H$ are stacks, 
then $F\times_{H}^{h}G$ is weakly
equivalent in {\rm Fib}$(E)$ to the
homotopy pullback in {\rm Champ}$(E)$
of the previous diagram.

{\rm (2)} $F\times_{H}^{h}G$ is 
weakly equivalent in {\rm Champ}$(E)$ 
to the homotopy pullback in {\rm Champ}$(E)$
of the previous diagram.
\end{lem}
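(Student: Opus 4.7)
The plan is to deduce both parts from Proposition 11 of Section 3.4, applied with $\mathcal{M}={\rm Fib}(E)$ (Theorem 13) and ${\rm L}\mathcal{M}={\rm Champ}(E)$ (Theorem 29). The key preparatory observation is that $F\times^{h}_{H}G$ is a model for the homotopy pullback of $F\to H\leftarrow G$ in the generalized model category ${\rm Fib}(E)$. Indeed every object of ${\rm Fib}(E)$ is fibrant there, because by definition its structure map is a fibration, hence an isofibration by Lemma 1(2). The analogue of \cite[Proposition 13.3.4]{Hi} recorded in Section 3.4 then produces a model as $F\times_{H}Pv$, where $Pv$ is the mapping path factorization of $v\colon G\to H$ built from the path object $H\to H^{(J)}\to H\times H$ of Remark 20, and where the pullback exists by Lemma 17(1) since $Pv\to H$ is an isofibration. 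Comparing the fibrewise description of $Pv$ in Remark 20 against the fibrewise pullback formula of Remark 18 identifies $F\times_{H}Pv$ tautologically with the iso-comma $F\times^{h}_{H}G$: both have fibre over $S\in Ob(E)$ the category of triples $(x,y,\theta)$ with $x\in F_{S}$, $y\in G_{S}$ and $\theta$ an isomorphism between $u(x)$ and $v(y)$ in $H_{S}$.

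For part (1), the hypothesis that $F$, $G$ and $H$ are stacks means they are fibrant in ${\rm Champ}(E)$, so Proposition 11(1) applies directly and yields that the homotopy pullback in ${\rm Fib}(E)$---which is $F\times^{h}_{H}G$---is ${\rm Fib}(E)$-weakly equivalent to the homotopy pullback in ${\rm Champ}(E)$. For part (2) the objects are arbitrary but still automatically fibrant in ${\rm Fib}(E)$, so I would apply Proposition 11(2). Its hypothesis requires that the pullback in ${\rm Fib}(E)$ of a map that is simultaneously a fibration of ${\rm Fib}(E)$ (isofibration) and a weak equivalence in ${\rm Champ}(E)$ (bicovering) is a bicovering; this is immediate from Lemma 41, which gives the stronger closure of the intersection class under pullback. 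Proposition 11(2) then delivers the ${\rm Champ}(E)$-weak equivalence between $F\times^{h}_{H}G$ and the homotopy pullback in ${\rm Champ}(E)$.

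I do not expect any serious obstacle: once the preparatory identification is made, both parts are formal consequences of results already in the excerpt. The only step that requires genuine work is the comparison $F\times^{h}_{H}G\cong F\times_{H}Pv$, and it amounts to matching the two standard descriptions of the fibre categories, which is entirely routine. If any care is needed it is only in verifying that the mapping path construction of Section 3.2 is legitimate in ${\rm Fib}(E)$---the factorization of the diagonal is provided by Remark 20 and the existence of the defining pullback by Lemma 17(1)---and that Lemma 41 suffices as the input to Proposition 11(2), which it plainly does.
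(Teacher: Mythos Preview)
Your proposal is correct and follows exactly the paper's approach: the paper's proof is the two-line ``(1) follows from Proposition 11(1). (2) follows from Proposition 11(2) and Lemma 41,'' and you have simply unpacked the implicit preliminary identification of $F\times_{H}^{h}G$ with the ${\rm Fib}(E)$-homotopy pullback that the paper records in the paragraph preceding the lemma.
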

\begin{proof}
(1) follows from Proposition 11(1). 
(2) follows from Proposition 11(2) 
and Lemma 41.
\end{proof}
Let $E$ and $E'$ be two sites and
$f\colon E\to E'$ be a category
fibred in groupoids. Then for every 
$S\in Ob(E)$, the induced map 
$f_{/S}\colon E_{/S}\to E'_{/f(S)}$
sends sieves to sieves.
Recall from Section
2.3 the adjoint pair $(f^{\bullet},
f_{\bullet}^{fib})$.
\begin{prop}[A change of base]
Let $E$ and $E'$ be two sites and
$f\colon E\to E'$ be a category
fibred in groupoids. Suppose that
for every $S\in Ob(E)$, the map 
$f_{/S}$ sends a refinement of $S$
to a refinement of $f(S)$. Then
the adjoint pair $(f^{\bullet},
f_{\bullet}^{fib})$ is a Quillen pair
between {\rm Champ}$(E)$
and {\rm Champ}$(E')$.
\end{prop}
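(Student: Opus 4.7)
The plan is to reduce the Quillen pair condition to showing that $f_{\bullet}^{fib}$ preserves stacks, and then to verify stack preservation using the fibred-in-groupoids hypothesis on $f$ together with the adjunction in equation (6).

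First, the classes of cofibrations in {\rm Champ}$(E)$ and {\rm Fib}$(E)$ coincide by the definition of a left Bousfield localization (and similarly for $E'$), so preservation of cofibrations by $f^{\bullet}$ is immediate from Proposition 25. The remaining task is to check that $f^{\bullet}$ sends trivial cofibrations of {\rm Champ}$(E)$ to $\mathcal{C}'$-local equivalences. For this it suffices, via the $2$-adjunction ${\bf Cart}_{E'}(f^{\bullet}(-),-)\cong {\bf Cart}_E(-,f_{\bullet}^{fib}(-))$, to show that $f_{\bullet}^{fib}$ carries every stack $Z'$ over $E'$ to a stack over $E$: given such preservation and a trivial cofibration $X\to Y$ in {\rm Champ}$(E)$, for any stack $Z'$ the map ${\bf Cart}_{E'}(f^{\bullet}(Y),Z')\to {\bf Cart}_{E'}(f^{\bullet}(X),Z')$ is the image under the adjunction of ${\bf Cart}_E(Y,f_{\bullet}^{fib}(Z'))\to {\bf Cart}_E(X,f_{\bullet}^{fib}(Z'))$, which is an equivalence since $f_{\bullet}^{fib}(Z')$ is a stack and $X\to Y$ is a $\mathcal{C}$-local equivalence.

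To establish that $f_{\bullet}^{fib}(Z')$ is a stack over $E$, fix $S\in Ob(E)$ and a refinement $R$ of $S$, and rewrite the restriction
\[
{\bf Cart}_{E}(E_{/S}, f_{\bullet}^{fib}(Z')) \to {\bf Cart}_{E}(R, f_{\bullet}^{fib}(Z'))
\]
using equation (6) (with $P=\eta(S)$ and with $P$ the subpresheaf corresponding to $R$) as
\[
{\bf Cart}_{E'}(f^{\bullet}(E_{/S}), Z') \to {\bf Cart}_{E'}(f^{\bullet}(R), Z').
\]
Because $f$ is fibred in groupoids, Section 2.3 gives that $f_{/S}\colon E_{/S}\to E'_{/f(S)}$ is a surjective equivalence, hence an $E'$-equivalence $f^{\bullet}(E_{/S})\to E'_{/f(S)}$ in {\rm Fib}$(E')$. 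Since a surjective equivalence takes sieves to sieves (Section 2.2) and $f$ is a fibration, $f_{/S}$ restricts to a surjective equivalence $f^{\bullet}(R)\to f_{/S}(R)$ onto the image sieve, and by hypothesis $f_{/S}(R)$ is a refinement of $f(S)$. This yields a commutative square in {\rm Fib}$(E')$
\[
\xymatrix{
f^{\bullet}(R) \ar[r] \ar[d] & f_{/S}(R) \ar[d]\\
f^{\bullet}(E_{/S}) \ar[r]_-{f_{/S}} & E'_{/f(S)}
}
\]
whose horizontal arrows are $E'$-equivalences and whose right vertical arrow lies in $\mathcal{C}'$. Applying ${\bf Cart}_{E'}(-,Z')$, Proposition 14 turns the horizontal arrows into equivalences and the stack condition on $Z'$ turns the right vertical into an equivalence; two-out-of-three then forces the left vertical to be an equivalence, which is exactly the stack condition for $f_{\bullet}^{fib}(Z')$ at $(S,R)$.

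I expect the main obstacle to be the careful verification that $f^{\bullet}(R)\to f_{/S}(R)$ is genuinely a surjective equivalence onto a sieve of $E'_{/f(S)}$: one needs both that $f$ is a fibration (so the literal image of $R$ is already closed under precomposition and is therefore a sieve) and that $f_{/S}$ is full and faithful (to guarantee the restriction is itself an equivalence). Once this combinatorial point is in place, the rest of the argument is a formal manipulation of the $2$-adjunction and two-out-of-three.
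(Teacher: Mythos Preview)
Your proof is correct and follows essentially the same route as the paper: reduce to showing that $f_{\bullet}^{fib}$ preserves stacks, then verify this via the same commutative square comparing restriction along $R\subset E_{/S}$ with restriction along $f_{/S}(R)\subset E'_{/f(S)}$, using that $f_{/S}$ and its restriction to $R$ are $E'$-equivalences together with Proposition~14 and two-out-of-three. The only cosmetic differences are that the paper's reduction step invokes the Joyal criterion of Section~3.5 (it suffices that $f_{\bullet}^{fib}$ preserve trivial fibrations and fibrations between fibrant objects, the latter being isofibrations between stacks by Section~3.3) rather than your direct $2$-adjunction argument, and the adjunction isomorphism you cite as ``equation~(6)'' is the one labeled~(7) in the paper.
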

\begin{proof}
Since we have Proposition 25,
it suffices to show that $f_{\bullet}^{fib}$
preserves stacks (Sections 3.5
and 3.3). Let $F$ be a stack in 
{\rm Fib}$(E')$, $S\in Ob(E)$ and
$R$ be a refinement of $S$. The map 
$f_{/S}$ is an $E'$-equivalence
and its restriction to $R$ is an
$E'$-equivalence $f_{/S}\colon R\to f_{/S}(R)$.
It follows from Proposition 14 that the
maps ${\bf Cart}_{E'}(f_{/S},F)$
are equivalences. We have the
following commutative diagram 
(see 2.3(7))
\begin{displaymath}
\xymatrix{
{{\bf Cart}_{E'}(E'_{/f(S)},F)}
\ar[r]\ar[d]&{{\bf Cart}_{E'}(E_{/S},F)}
\ar[d]\ar[r]^{\cong}&
{{\bf Cart}_{E}(E_{/S},f_{\bullet}^{fib}F)}\ar[d]\\
{{\bf Cart}_{E'}(f_{/S}(R),F)}\ar[r]
&{{\bf Cart}_{E'}(R,F)}\ar[r]^{\cong}&
{{\bf Cart}_{E}(R,f_{\bullet}^{fib}F)}
}
\end{displaymath}
The left vertical arrow is an equivalence
by assumption, hence $f_{\bullet}^{fib}F$
is a stack.
\end{proof}

\section{Categories fibred in groupoids}

Let $E$ be a category. In this section 
we give the analogues of Theorems 
13 and 29 for the category 
{\rm Fibg}$(E)$ defined in Section 2.3.
\begin{thm}
The category {\rm Fibg}$(E)$ is a proper 
generalized model category 
with the $E$-equivalences as weak equivalences,
the maps that are injective on objects as 
cofibrations and the isofibrations as fibrations.
\end{thm}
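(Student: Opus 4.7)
The plan is to transfer the generalized model structure of Theorem 13 from {\rm Fib}$(E)$ to the full subcategory {\rm Fibg}$(E)$. Since the three distinguished classes of maps are defined by conditions on underlying functors, axiom A3 and the lifting halves of axiom A4 are immediate from Theorem 13. What requires verification is that {\rm Fibg}$(E)$ is closed in {\rm Fib}$(E)$ under all the limit and colimit constructions demanded by axioms A1, A2, and the factorization halves of A4, and then that properness is inherited.

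For A1 and A2, the terminal object $E$ of {\rm Fib}$(E)$ lies in {\rm Fibg}$(E)$ because its fibres are singletons, and the initial (empty) object is vacuously fibred in groupoids. Pullbacks of fibrations and pushouts of cofibrations are produced in {\rm Fib}$(E)$ by Lemma 17(1) and Lemma 17(2) respectively; Remark 18 then ensures that when the input diagram lies in {\rm Fibg}$(E)$ so does the apex, and because the inclusion ${\rm Fibg}(E)\subset {\rm Fib}(E)$ is full, these squares are the pullbacks and pushouts in {\rm Fibg}$(E)$ as well.

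For the factorization half of A4, given $u\colon F\to G$ in {\rm Fibg}$(E)$ I apply the two factorizations of Theorem 13 in {\rm Fib}$(E)$ to obtain $F\to H\to G$. The key observation is that $H$ is again fibred in groupoids. In the $(\mathrm{C},\mathrm{F}\cap\mathrm{W})$ factorization, $H\to G$ is an $E$-equivalence, so by Proposition 14 each $H_{S}\to G_{S}$ is an equivalence of categories; since $G_{S}$ is a groupoid and equivalences of categories reflect isomorphisms, $H_{S}$ is a groupoid. The $(\mathrm{C}\cap\mathrm{W},\mathrm{F})$ factorization is symmetric: now $F\to H$ is an $E$-equivalence and each $F_{S}$ is a groupoid, so again each $H_{S}$ is a groupoid. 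Combined with the lifting properties inherited from {\rm Fib}$(E)$, this gives both weak factorization systems on {\rm Fibg}$(E)$.

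Properness is then inherited formally. A pushout in {\rm Fibg}$(E)$ of an $E$-equivalence along a cofibration is computed in {\rm Fib}$(E)$ by the discussion of A2, hence is an $E$-equivalence by the left properness of {\rm Fib}$(E)$; right properness is dual, using pullbacks of fibrations. The only step that genuinely uses the groupoid hypothesis is the closure of {\rm Fibg}$(E)$ under the two factorizations, via the observation that an equivalence of categories between a category and a groupoid forces both to be groupoids. I do not expect any serious obstacle: everything else is essentially bookkeeping on top of Theorem 13, Remark 18, and the fact that weak equivalences, cofibrations, and fibrations are tested on underlying functors.
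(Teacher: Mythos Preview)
Your proposal is correct and follows the same approach as the paper: transfer the structure from Theorem 13 and use Remark 18 for axiom A2. The paper's own proof is the single line ``It only remains to check axiom A2 from Definition 8. This is satisfied by Remark 18,'' so your explicit verification that the factorizations of Theorem 13 stay inside {\rm Fibg}$(E)$---via the observation that a category equivalent to a groupoid is a groupoid---spells out a point the paper leaves implicit, but the overall strategy is identical.
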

\begin{proof}
It only remains to check axiom A2 
from Definition 8. This is satisfied
by Remark 18.
\end{proof}
Suppose now that $E$ is a site. Notice
that for every $S\in Ob(E)$ and every 
refinement $R$ of $S$, $E_{/S}$ and $R$
are objects of {\rm Fibg}$(E)$.
We recall from \cite[Chapitre II D\'efinition 1.2.1(ii)]{Gi}
that an object $F$ of {\rm Fib}$(E)$ is a 
\emph{prestack} if for every $S\in Ob(E)$
and every refinement $R\subset E_{/S}$
of $S$, the restriction functor
\begin{displaymath}
{\bf Cart}_{E}(E_{/S},F)\to {\bf Cart}_{E}(R,F)
\end{displaymath}
is full and faithful.
\begin{lem}
{\rm (}\cite{Gi} {\rm and} 
\cite[Proposition 4.20]{Vi}{\rm )}
If an object $F$ of {\rm Fib}$(E)$ is 
a stack, so is $F^{cart}$. The converse 
holds provided that $F$ is a prestack.
\end{lem}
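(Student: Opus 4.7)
The plan is to reduce both implications to the identity
\[
\mathbf{Cart}_E(X,F^{cart}) \cong \max\,\mathbf{Cart}_E(X,F)
\]
for every $X\in\text{Fibg}(E)$, which is essentially a restatement of the adjunction isomorphism (5) from Section~2.3 (using that when $F^{cart}$ is fibred in groupoids, every vertical arrow of $F^{cart}$ is an isomorphism, so $\mathbf{Cartg}_E(X,F^{cart})=\mathbf{Cart}_E(X,F^{cart})$). I would first record this identification naturally in $X$, noting in particular that both $X=E_{/S}$ and $X=R$ are fibred in groupoids, so for any refinement $R\subset E_{/S}$ the square
\[
\xymatrix{
\mathbf{Cart}_E(E_{/S},F^{cart})\ar[r]\ar[d]_{\cong} & \mathbf{Cart}_E(R,F^{cart})\ar[d]^{\cong}\\
\max\mathbf{Cart}_E(E_{/S},F)\ar[r] & \max\mathbf{Cart}_E(R,F)
}
\]
commutes, where the horizontal arrows are the restrictions.

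For the forward direction, suppose $F$ is a stack. Then the bottom horizontal arrow of the square is obtained by applying $\max$ to the restriction functor $\mathbf{Cart}_E(E_{/S},F)\to \mathbf{Cart}_E(R,F)$, which is an equivalence of categories by hypothesis. Since $\max$ preserves equivalences, the top horizontal arrow is an equivalence of groupoids, and this holds for every refinement $R$. Hence $F^{cart}$ is a stack.

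For the converse, suppose $F$ is a prestack and $F^{cart}$ is a stack. Fix a refinement $R\subset E_{/S}$. The hypothesis that $F$ is a prestack tells us that the restriction
\[
u\colon\mathbf{Cart}_E(E_{/S},F)\to \mathbf{Cart}_E(R,F)
\]
is fully faithful, so to prove $F$ is a stack it suffices to show $u$ is essentially surjective. The assumption that $F^{cart}$ is a stack, combined with the identification above, shows that $\max(u)$ is an equivalence of groupoids, hence essentially surjective. Now any object of $\mathbf{Cart}_E(R,F)$ is isomorphic, \emph{in} $\mathbf{Cart}_E(R,F)$, to an object of $\max\mathbf{Cart}_E(R,F)$ (it equals such an object), and every such object is isomorphic in $\max\mathbf{Cart}_E(R,F)$ — equivalently in $\mathbf{Cart}_E(R,F)$ — to one coming from $\max \mathbf{Cart}_E(E_{/S},F)\subset \mathbf{Cart}_E(E_{/S},F)$. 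Thus $u$ is essentially surjective, completing the proof.

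There is no real obstacle here beyond the bookkeeping of the identification $\mathbf{Cart}_E(X,F^{cart})\cong\max\mathbf{Cart}_E(X,F)$; once that is set up, both implications follow from the elementary fact that a fully faithful functor is an equivalence iff its restriction to maximal subgroupoids is essentially surjective, together with the fact that $\max$ preserves equivalences.
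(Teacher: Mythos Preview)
Your proof is correct and follows essentially the same route as the paper's: both set up the commutative square coming from the natural isomorphism ${\bf Cart}_{E}(X,F^{cart})\cong max\,{\bf Cart}_{E}(X,F)$ for $X\in{\rm Fibg}(E)$ (the paper's 2.3(6)), and both reduce the two implications to the facts that $max$ preserves equivalences and that essential surjectivity is detected on maximal subgroupoids. The only cosmetic difference is that the paper writes ${\bf Cartg}_{E}$ where you write ${\bf Cart}_{E}$, which you correctly note coincide here.
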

\begin{proof}
We recall that $max\colon CAT\to GRPD$ 
denotes the maximal groupoid functor.
We recall that an arbitrary functor $f$
is essentially surjective if and only if
the functor $max(f)$ is so and that 
if $f$ is full and faithful then so is 
$max(f)$. The Lemma follows then from 
the following commutative diagram
(see 2.3(6))
\begin{displaymath}
\xymatrix{
{{\bf Cartg}_{E}(E_{/S},F^{cart})}
\ar[r]^{\cong} \ar[d]&
{max{\bf Cart}_{E}(E_{/S},F)} \ar[d]\\
{{\bf Cartg}_{E}(R,F^{cart})} \ar[r]^{\cong}
&{max{\bf Cart}_{E}(R,F)}
}
\end{displaymath}
\end{proof}
Let $F$ be an object of {\rm Fibg}$(E)$,
$G$ an object of {\rm Fib}$(E)$ and 
$u\colon F\to G$ a bicovering map.
We claim that $u^{cart}$ is a bicovering
map as well. For, consider the diagram
\begin{displaymath}
\xymatrix{
{F^{cart}}\ar @{=}[r]
\ar[d]_{u^{cart}}&{F} \ar[d]^{u}\\
{G^{cart}} \ar[r]
&{G}
}
\end{displaymath}
One can readily check that the 
inclusion map $G^{cart}\to G$
is an isofibration and that by Lemma 
17(1) the above diagram is a pullback.
We conclude by Lemma 33. If, 
in addition, $G$ is a stack, then
the map $G^{cart}\to G$ is a 
bicovering map between stacks 
(see Lemma 45), hence by 
\cite[Chapitre II Proposition 1.4.5]{Gi}
it is an $E$-equivalence. It follows that 
$G$ is an object of {\rm Fibg}$(E)$.
\begin{thm}
There is a proper generalized model category 
{\rm Champg}$(E)$ on the category 
{\rm Fibg}$(E)$ in which the weak 
equivalences are the bicovering maps,
the cofibrations are the maps 
that are injective on objects and the
fibrantions are the fibrations of 
{\rm Champ}$(E)$.
\end{thm}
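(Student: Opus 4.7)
The plan is to apply Lemma 9 to the natural generalized model category on {\rm Fibg}$(E)$ supplied by Theorem 44, with the enlarged class ${\rm W}'$ taken to be the bicovering maps. The inclusion $\text{$E$-equivalences}\subset\text{bicoverings}$ is Proposition 30(1), and the two-out-of-three property is inherited from the already established left Bousfield localization {\rm Champ}$(E)$. Closure of ${\rm C}\cap {\rm W}'$ under codomain retracts is routine (cofibrations are closed under codomain retracts, and bicoverings under retracts by Proposition 30(4)), so by the second part of Lemma 9 the whole problem reduces to producing, for every $u\colon F\to G$ in {\rm Fibg}$(E)$, a factorization as a cofibration-bicovering followed by a map in ${\rm F}'$.

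For this factorization I would imitate the proof of Theorem 29 step by step, verifying at each step that the construction remains inside {\rm Fibg}$(E)$. The essential input is that Giraud's stack completion preserves {\rm Fibg}$(E)$: if $F\in {\rm Fibg}(E)$, then $aF\colon F\to \mathsf{A}F$ is a bicovering map with stack target, and the paragraph just before the statement of Theorem 46 (a bicovering map with source in {\rm Fibg}$(E)$ and stack target forces the target into {\rm Fibg}$(E)$) gives $\mathsf{A}F\in{\rm Fibg}(E)$. Given $u$, I would factor $\mathsf{A}u\colon \mathsf{A}F\to \mathsf{A}G$ by Theorem 44 as an $E$-equivalence $\mathsf{A}F\to H$ followed by an isofibration $H\to \mathsf{A}G$; then $H$ is a stack by Proposition 30(2), and lies in {\rm Fibg}$(E)$ for the same reason as $\mathsf{A}F$. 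Lemma 17(1) together with Remark 18 keeps the pullback $G\times_{\mathsf{A}G}H$ inside {\rm Fibg}$(E)$, and Lemmas 31 and 33 (applied exactly as in Theorem 29) show both that $G\times_{\mathsf{A}G}H\to G$ has the right lifting property with respect to the trivial cofibrations of {\rm Champ}$(E)$ and that $F\to G\times_{\mathsf{A}G}H$ is bicovering. Factoring this last map in Fibg$(E)$ via Theorem 44 as a cofibration $F\to K$ followed by a trivial fibration $K\to G\times_{\mathsf{A}G}H$ gives the desired factorization entirely inside {\rm Fibg}$(E)$.

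To identify the fibrations of the new structure with the fibrations of {\rm Champ}$(E)$ that happen to lie in {\rm Fibg}$(E)$, one inclusion is immediate: every trivial cofibration of {\rm Champg}$(E)$ is in particular a cofibration and a bicovering in {\rm Fib}$(E)$, hence a trivial cofibration of {\rm Champ}$(E)$. For the converse I would argue by a retract: given a fibration $p$ of {\rm Champg}$(E)$, factor $p=fi$ with $i$ a trivial cofibration and $f$ the composite built above; $f$ is a fibration of {\rm Champ}$(E)$ because it is a composite of a trivial fibration and a pullback along an isofibration of a map between stacks, and the lifting of $p$ against $i$ exhibits $p$ as a retract of $f$, after which closure of Champ$(E)$-fibrations under retracts finishes the job. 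Left properness transfers from Theorem 44 by the usual localization argument, and right properness follows from Lemma 33 together with Lemma 17(1) and Remark 18, which identify pullbacks in {\rm Fibg}$(E)$ along fibrations with the corresponding pullbacks in {\rm Fib}$(E)$.

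The main obstacle, and the only real content beyond bookkeeping, is the stability of {\rm Fibg}$(E)$ under the stack-completion construction; without it the factorization strategy would leak into {\rm Fib}$(E)$ exactly at the moment one wants to stay groupoidal. Happily, the discussion preceding the theorem already gives this stability by combining the right-adjoint $(-)^{cart}$ with Lemma 33.
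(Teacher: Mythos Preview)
Your proposal is correct and follows the same approach as the paper's proof, which is very terse: it simply invokes Theorem 44 and Lemma 9, observes that the factorization argument is identical to that of Theorem 29, and notes that the only thing to check is that $\mathsf{A}$ preserves {\rm Fibg}$(E)$, which is exactly the content of the paragraph preceding the theorem. You have additionally spelled out the retract argument identifying the fibrations with those of {\rm Champ}$(E)$ and the properness verification, both of which the paper leaves to the reader.
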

\begin{proof}
Using Theorem 44 and Lemma 9
it only remains to prove 
the factorization of an arbitrary 
map of {\rm Fibg}$(E)$ into a 
map that is both a cofibration and 
bicovering followed by a map 
that has the right lifting property 
with respect to all maps that are both 
cofibrations and bicoverings. The 
argument is the same as the one 
given in the proof of Theorem 29.
For it to work one needs the functor
$\mathsf{A}$ to send objects of 
{\rm Fibg}$(E)$ to objects of 
{\rm Fibg}$(E)$. This is so by the 
considerations preceding the 
statement of the Theorem, applied
to the map $F\to \mathsf{A}F$.
\end{proof}

\section{Sheaves of categories}

We begin by recalling the notion
of sheaf of categories.

Let $E$ be a small site. We recall
that $\widehat{E}$ is the category of 
presheaves on $E$ and $\eta\colon 
E\to \widehat{E}$ is the Yoneda 
embedding. We denote by $\widetilde{E}$ 
the category of sheaves on $E$ and 
by $a$ the associated sheaf functor, 
left adjoint to the inclusion functor 
$i\colon \widetilde{E}\to \widehat{E}$.

We denote by $\underline{Hom}$
the internal $CAT$-hom of the 
$2$-category $[E^{op},CAT]$
and by $X^{(A)}$ the cotensor 
of $X\in [E^{op},CAT]$ with 
a category $A$. Let 
$Ob\colon CAT\to SET$ denote
the set of objects functor;
it induces a functor $Ob\colon 
[E^{op},CAT]\to\widehat{E}$.
\begin{lem} 
Let $X$ be an object of 
$[E^{op},CAT]$. The 
following are equivalent.

$(a)$ For every category $A$, 
$ObX^{(A)}$ is a sheaf 
\cite[Expos\'e ii D\'{e}finition 6.1]{AGV}.

$(b)$ For every $S\in Ob(E)$ and
every refinement $R$ of $S$,
the natural map
\begin{displaymath}
\underline{Hom}(D\eta(S),X)\to
\underline{Hom}(DR',X)
\end{displaymath}
is an isomorphism, where $R'$ is
the sub-presheaf of $\eta(S)$ which 
corresponds to $R$.

$(c)$ For every $S\in Ob(E)$ and
every refinement $R$ of $S$,
the natural map
\begin{displaymath}
X(S)\to \underset{R^{op}}{\rm lim}(X|R)
\end{displaymath}
is an isomorphism, 
where $(X|R)$ is the composite
$R^{op}\to (E_{/S})^{op}\to E^{op}
\overset{X}\to CAT$.
\end{lem}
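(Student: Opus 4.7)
The plan is to prove $(b)\Leftrightarrow(c)$ and $(a)\Leftrightarrow(c)$ directly rather than cycling. The central tools are the $CAT$-enriched Yoneda lemma for $[E^{op},CAT]$, the description of the sub-presheaf $R'\subset \eta(S)$ as a colimit of representables indexed by $R$, and the fact that both the cotensor $[A,-]\colon CAT\to CAT$ (a right adjoint to $A\times -$) and $Ob\colon CAT\to SET$ (a right adjoint to $D$) preserve limits.

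For $(b)\Leftrightarrow(c)$ I would first identify $\underline{Hom}(D\eta(S),X)\cong X(S)$ via the $CAT$-enriched Yoneda lemma: viewing $E$ as a $CAT$-enriched category with discrete hom-categories, the enriched representable at $S$ is exactly $D\eta(S)$. Next, a sieve $R$ realizes $R'$ as the colimit $\underset{T\in R}{\mathrm{colim}}\,\eta(T)$ in $\widehat{E}$, and since $D\colon \widehat{E}\to [E^{op},CAT]$ is a left adjoint to $Ob$, one has $DR'\cong \underset{T\in R}{\mathrm{colim}}\,D\eta(T)$ in $[E^{op},CAT]$. Applying $\underline{Hom}(-,X)$ converts this colimit to the limit $\underset{R^{op}}{\mathrm{lim}}(X|R)$, again by enriched Yoneda on each vertex, and the comparison map in $(b)$ is identified with the canonical map in $(c)$.

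For $(c)\Rightarrow(a)$, assume $X(S)\to \underset{R^{op}}{\mathrm{lim}}(X|R)$ is an isomorphism of categories. Applying the limit-preserving cotensor $[A,-]$ gives an isomorphism $X^{(A)}(S)\cong \underset{R^{op}}{\mathrm{lim}}(X^{(A)}|R)$, and applying the limit-preserving functor $Ob$ then yields the bijection defining the sheaf condition for $Ob X^{(A)}$. For $(a)\Rightarrow(c)$, test $(a)$ on two specific categories $A$. Taking $A=\ast$ gives $Ob X^{(\ast)}=Ob X$, so the comparison functor $X(S)\to \underset{R^{op}}{\mathrm{lim}}(X|R)$ is bijective on objects. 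Taking $A$ to be the category with two objects and a single non-identity arrow gives $Ob X^{(A)}=Mor X$, and the corresponding bijection, combined with the one on objects (noting that source and target are preserved by the comparison functor), yields a bijection on each hom-set. A functor that is bijective on objects and on every hom-set is an isomorphism of categories, which is $(c)$.

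The main obstacle is mostly bookkeeping: identifying $DR'$ as the colimit $\underset{T\in R}{\mathrm{colim}}\,D\eta(T)$ in $[E^{op},CAT]$, and verifying that the ``limit of categories'' $\underset{R^{op}}{\mathrm{lim}}(X|R)$ is computed levelwise, so that $Ob$ and $Mor$ of it are the pointwise limits of $Ob(X|R)$ and $Mor(X|R)$. Once these identifications are in place, all three conditions unfold to the same bijection at the level of objects and morphisms of a single comparison map.
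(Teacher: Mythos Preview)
The paper states this lemma without proof, so there is no argument to compare against. Your proof is correct and is the natural one: the enriched Yoneda isomorphism $\underline{Hom}(D\eta(S),X)\cong X(S)$ together with the colimit presentation $R'\cong \underset{T\in R}{\mathrm{colim}}\,\eta(T)$ (preserved by the left adjoint $D$) immediately gives $(b)\Leftrightarrow(c)$, and testing $(a)$ on $A=\ast$ and on the walking arrow recovers the object and arrow parts of the comparison functor in $(c)$, while the converse $(c)\Rightarrow(a)$ follows because both $[A,-]$ and $Ob$ preserve limits. The bookkeeping you flag---that limits of categories are created by $Ob$ and $Mor$---is exactly what makes the last step go through, and it holds because $CAT$ is the category of models of a finite-limit sketch in $SET$.
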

An object $X$ of $[E^{op},CAT]$
is a \emph{sheaf on} $E$ 
\emph{with values in} $CAT$ 
(simply, \emph{sheaf of categories})
if it satisfies one of the conditions of
Lemma 47. We denote by 
{\rm Faisc}$(E;CAT)$ the full subcategory 
of $[E^{op},CAT]$ whose objects 
are the sheaves of categories.
The category $[E^{op},CAT]$ is
equivalent to the category
{\rm Cat}$(\widehat{E})$ 
of internal categories and internal 
functors in $\widehat{E}$ and
{\rm Faisc}$(E;CAT)$ is equivalent to 
the category {\rm Cat}$(\widetilde{E})$
of internal categories and internal functors 
in $\widetilde{E}$ 
\cite[Expos\'e ii Proposition 6.3.1]{AGV}.

Consider now the adjunctions 
\begin{displaymath}
\xymatrix{
{{\rm Fib}(E)}\ar[r]^{\mathsf{L}}&
{{\rm Cat}(\widehat{E})}\ar@<1ex>[l]^{\Phi}
\ar[r]^{a}&{{\rm Cat}(\widetilde{E})}
\ar@<1ex>[l]^{i}
}
\end{displaymath}
(see Section 2.3 for the adjoint pair
$(\mathsf{L},\Phi)$). We denote the 
unit of the adjoint pair $(a,i)$ by $\mathsf{k}$. 
\begin{thm}
\label{third}
There is a right proper model category
{\rm Stack}$(\widetilde{E})_{proj}$
on the category {\rm Cat}$(\widetilde{E})$ 
in which the weak equivalences
and the fibrations are the maps that $\Phi$
takes into weak equivalences and fibrations
of {\rm Champ}$(E)$. The adjoint pair 
$(a\mathsf{L},\Phi i)$ is a Quillen equivalence 
between {\rm Champ}$(E)$ and 
{\rm Stack}$(\widetilde{E})_{proj}$.
\end{thm}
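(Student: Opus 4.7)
The plan is to transfer the generalized model structure of {\rm Champ}$(E)$ to {\rm Cat}$(\widetilde{E})$ along the right adjoint $\Phi i$: declare a map $f$ in {\rm Cat}$(\widetilde{E})$ to be a weak equivalence (respectively, fibration) if and only if $\Phi i(f)$ is one in {\rm Champ}$(E)$, and define the cofibrations by the left lifting property against trivial fibrations. Since {\rm Cat}$(\widetilde{E})$, being the category of internal categories in a Grothendieck topos, is complete and cocomplete, the limit/colimit axioms hold outright, so in contrast to {\rm Champ}$(E)$ one obtains an honest model category. The two-out-of-three property for weak equivalences, their closure under retracts, and the halves of the two weak factorization systems defined by right lifting against (trivial) fibrations are inherited from {\rm Champ}$(E)$ via the fact that $\Phi i$ preserves finite limits.

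The core work lies in producing the two factorizations, and the delicate one is (trivial cofibration, fibration). Here I would invoke Lemma 49, the variation of Quillen's path object argument advertised in the introduction. Concretely, for fibrant $Y$ in {\rm Cat}$(\widetilde{E})$ one takes as path object the internal cotensor $Y^{(J)}$, formed inside {\rm Cat}$(\widetilde{E})$ with $J$ the groupoid of Section 2; because $\Phi i$ is a right adjoint it commutes with this cotensor and with the defining pullbacks of the mapping path factorization of Section 3.2, so that applying $\Phi i$ to the mapping path construction performed in {\rm Cat}$(\widetilde{E})$ reproduces the analogous construction on $\Phi i f$ in {\rm Champ}$(E)$, and Remark 20 supplies the required factorization properties for $\Phi i f$. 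The other factorization is produced analogously via a dual path-object/retract argument. Right properness of {\rm Stack}$(\widetilde{E})_{proj}$ is then immediate: $\Phi i$ preserves pullbacks, and {\rm Champ}$(E)$ is right proper by Theorem 29.

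For the Quillen equivalence the adjunction $(a\mathsf{L},\Phi i)$ is Quillen by the very definition of the weak equivalences and fibrations in {\rm Stack}$(\widetilde{E})_{proj}$. To upgrade it to a Quillen equivalence it suffices, via a two-out-of-three argument against $\Phi i$ applied to the counit, to check that for every cofibrant $F$ in {\rm Champ}$(E)$ the unit $F \to \Phi i a \mathsf{L}F$ is a weak equivalence in {\rm Champ}$(E)$, i.e., bicovering. This unit factors as $F \overset{\mathsf{l}F}{\longrightarrow} \Phi\mathsf{L}F \longrightarrow \Phi i a \mathsf{L}F$; the first arrow is an $E$-equivalence by Section 2.3, while the second arrow is $\Phi$ applied to the unit $\mathsf{L}F \to i a \mathsf{L}F$ of $(a,i)$ in {\rm Cat}$(\widehat{E})$. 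The \emph{main obstacle} of the whole theorem is showing that this second arrow is bicovering; here one must exploit Giraud's \emph{explicit} construction of the associated stack via sheaves of categories (the construction alluded to in the introduction), which identifies $\Phi i a \mathsf{L} F$ up to canonical equivalence with a model of the stack completion $\mathsf{A} F$ in such a way that the above composite is the bicovering unit $aF\colon F \to \mathsf{A}F$ of Section 5. Once this identification is in hand, the Quillen equivalence follows formally.
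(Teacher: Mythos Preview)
Your overall strategy---transfer along $\Phi i$ via Lemma 49---matches the paper's, and your treatment of the path object (condition (4) of Lemma 49) using $Y^{(J)}$ is correct. But there are two genuine gaps and one misplacement of difficulty.

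First, you do not address condition (3) of Lemma 49 at all: the construction of a fibrant replacement functor on ${\rm Cat}(\widetilde{E})$. This is where the real substance lies. The paper sets $\widehat{F}X=ia\mathcal{S}\Phi X$ and must verify both that $\Phi\widehat{F}X$ is a stack and that $\Phi X\to\Phi\widehat{F}X$ is bicovering; these rely on Giraud's results that for a sheaf of categories $X$ the object $\Phi ia\mathcal{S}\Phi X$ is a stack and that $\Phi\mathsf{k}(X)\colon\Phi X\to\Phi iaX$ is bicovering for every presheaf of categories $X$. Your proposal mentions Giraud's explicit associated-stack construction only at the very end, for the Quillen equivalence, and never for the existence of the model structure itself; without a fibrant replacement the mapping-path factorization in Lemma 49 cannot be carried out for maps between non-fibrant objects.

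Second, your remark that ``the other factorization is produced analogously via a dual path-object/retract argument'' is wrong. In Lemma 49 the (cofibration, trivial fibration) factorization comes from the small object argument applied to $F(I)$, where $I$ is a set of maps detecting trivial fibrations in ${\rm Champ}(E)$. The paper exhibits such a set explicitly ($I=\{f\times E_{/S}\}$ with $f$ ranging over a generating set for surjective equivalences) and checks via Proposition 16(2) that RLP against $I$ characterizes trivial fibrations. There is no dual path-object argument here.

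Finally, you overstate the difficulty of the Quillen equivalence. The factorization $F\to\Phi\mathsf{L}F\to\Phi ia\mathsf{L}F$ you wrote down is exactly the paper's, but the second map is bicovering immediately from Giraud's lemma that $\Phi\mathsf{k}(X)$ is bicovering for \emph{every} $X$---no identification of $\Phi ia\mathsf{L}F$ with the stack completion $\mathsf{A}F$ is needed. The ``main obstacle'' you name is in fact the easy step; the hard step is the fibrant replacement you omitted.
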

The prove the existence of 
the model category 
{\rm Stack}$(\widetilde{E})_{proj}$
we shall use  Lemma 49 below and 
the following facts:

(1) if $X$ is a sheaf of categories
then $\Phi X$ is a prestack 
\cite[Chapitre II 2.2.1]{Gi};

(2) for every $X\in [E^{op},CAT]$, the
natural map $\Phi \mathsf{k}(X)
\colon \Phi X\to \Phi iaX$ is bicovering
 \cite[Chapitre II Lemme 2.2.2(ii)]{Gi};

(3) if $X$ is a sheaf of categories
then $\Phi ia\mathcal{S}\Phi X$ is a 
stack (which is a consequence of) 
\cite[Chapitre II Lemme 2.2.2(iv)]{Gi}.

See the end of this section for 
another proof of (1).

We recall that the weak equivalences 
of {\rm Stack}$(\widetilde{E})_{proj}$ 
have a simplified description. Let $f$ be 
a map of {\rm Cat}$(\widetilde{E})$.
By \cite[Chapitre II Proposition 1.4.5]{Gi} 
the map $\Phi f$ is bicovering
if and only if $\Phi f$ is full and faithful 
and $\Phi f$ is `locally essentially 
surjective on objects'. Given any map $u$ 
of {\rm Fib}$(E)$, the underlying functor
of $u$ is full and faithful if and only 
if for every $S\in Ob(E)$, $u_{S}$
is full and faithful 
\cite[Expos\'e VI Proposition 6.10]{Gr}. 
Hence $f$ is a weak equivalence if and 
only if $f$ is full and faithful and $\Phi f$ 
is `locally essentially surjective on objects'.

\begin{lem}
Let $\mathcal{M}$ be a generalized 
model category. Suppose that there is a
set $I$ of maps of $\mathcal{M}$
such that a map of $\mathcal{M}$
is a trivial fibration if and only
if it has the right lifting property
with respect to every element of $I$.
Let $\mathcal{N}$ be a complete and 
cocomplete category and let 
$F\colon \mathcal{M}\rightleftarrows 
\mathcal{N}\colon G$ be a pair of adjoint
functors. Assume that

(1) the set $F(I)=\{F(u)\ |\ u\in I\}$ permits 
the small object argument 
\cite[Definition 10.5.15]{Hi};

(2) $\mathcal{M}$ is right proper;

(3) $\mathcal{N}$ has a fibrant
replacement functor, which means 
that there are 

$(i)$ a functor $\widehat{{\rm F}}\colon \mathcal{N}
\to \mathcal{N}$ such that for every object $X$ of 
$\mathcal{N}$ the object $G\widehat{{\rm F}}X$ 
is fibrant and

$(ii)$ a natural transformation from the identity functor 
of $\mathcal{N}$ to $\widehat{{\rm F}}$ such that 
for every object $X$ of $\mathcal{N}$ the map 
$GX\to G\widehat{{\rm F}}X$ is a
weak equivalence;

(4) every fibrant object of 
$\mathcal{N}$ has a path object, which 
means that for every object $X$ of 
$\mathcal{N}$ such that $GX$ is
fibrant there is a factorization 
\begin{displaymath}
\xymatrix{
{X}\ar[r]^{s} &{PathX} 
\ar[r]^{p_{0}\times p_{1}} & {X\times X}
}
\end{displaymath}
of the diagonal map $X\rightarrow X\times X$
such that $G(s)$ is a weak equivalence
and $G(p_{0}\times p_{1})$ is a fibration.

Then $\mathcal{N}$ becomes a right proper
model category in which the weak equivalences
and the fibrations are the maps that $G$
takes into weak equivalences and fibrations.

The adjoint pair $(F,G)$ is a Quillen
equivalence if and only if for every
cofibrant object $A$ of $\mathcal{M}$,
the unit map $A\to GFA$ of the adjunction
is a weak equivalence.
\end{lem}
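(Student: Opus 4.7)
The plan is to transfer a model structure from $\mathcal{M}$ to $\mathcal{N}$ along $G$, adapting the classical Kan--Quillen recognition to the generalized setting. I declare a map of $\mathcal{N}$ to be a weak equivalence or a fibration if $G$ sends it to one, and a cofibration to be a map with the left lifting property against the trivial fibrations. Two-out-of-three and closure of weak equivalences under retracts descend from $\mathcal{M}$ via $G$. By adjunction, a map of $\mathcal{N}$ has the right lifting property against $F(I)$ exactly when its $G$-image has it against $I$, i.e.\ when the $G$-image is a trivial fibration in $\mathcal{M}$, equivalently when the map is a trivial fibration in $\mathcal{N}$. Hence by hypothesis (1) the small object argument for $F(I)$ yields the factorization of every map as a cofibration followed by a trivial fibration, and the cofibrations are exactly the left class of this factorization system.

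The crux is the factorization as a trivial cofibration followed by a fibration, which I obtain by a variant of Quillen's path object argument arranged so that every pullback needed actually exists in the generalized category $\mathcal{M}$. Given $f\colon X\to Y$, use hypothesis (3) to obtain fibrant replacements $j_{X}\colon X\to \widehat{{\rm F}}X$ and $j_{Y}\colon Y\to \widehat{{\rm F}}Y$ together with an induced map $\widehat{{\rm F}}f$. Take a path object $\widehat{{\rm F}}Y\to \mathrm{Path}(\widehat{{\rm F}}Y)\to \widehat{{\rm F}}Y\times \widehat{{\rm F}}Y$ by hypothesis (4); the product on the right exists because $G\widehat{{\rm F}}Y$ is fibrant, and two-out-of-three forces both projections $p_{0},p_{1}\colon \mathrm{Path}(\widehat{{\rm F}}Y)\to \widehat{{\rm F}}Y$ to have $G$-image a trivial fibration. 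Form $P=\widehat{{\rm F}}X\times_{\widehat{{\rm F}}Y}\mathrm{Path}(\widehat{{\rm F}}Y)$ via $\widehat{{\rm F}}f$ and $p_{0}$; the $G$-image of this pullback exists in $\mathcal{M}$ because $Gp_{0}$ is a fibration. The section $(1,s\widehat{{\rm F}}f)\colon \widehat{{\rm F}}X\to P$ is a weak equivalence because the opposite projection $P\to \widehat{{\rm F}}X$ is a pullback of the trivial fibration $p_{0}$. The canonical map $P\to \widehat{{\rm F}}X\times \widehat{{\rm F}}Y$ is itself a pullback of $p_{0}\times p_{1}$, hence a fibration, and the projection $\widehat{{\rm F}}X\times \widehat{{\rm F}}Y\to \widehat{{\rm F}}Y$ is a fibration because $\widehat{{\rm F}}X$ is fibrant; composing these shows $P\to \widehat{{\rm F}}Y$ is a fibration. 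Now form $W=Y\times_{\widehat{{\rm F}}Y}P$; the projection $W\to Y$ is a fibration (pullback of a fibration), and the universal property of the pullback produces $X\to W$ with $X\to W\to Y$ equal to $f$. Right properness (2), applied to the pullback $GW=GY\times_{G\widehat{{\rm F}}Y}GP$, shows that the projection $GW\to GP$ is a weak equivalence; combined with the weak equivalence $GX\to G\widehat{{\rm F}}X\to GP$ and two-out-of-three, one concludes that $X\to W$ is a weak equivalence in $\mathcal{N}$. Finally, factor $X\to W$ as a cofibration followed by a trivial fibration using the first factorization; this produces the desired factorization of $f$ as a trivial cofibration followed by a fibration.

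The remaining axioms are routine. Lifting of trivial cofibrations against fibrations follows from the standard retract argument: factor any trivial cofibration $i$ as $pj$ with $j$ produced by the small object argument and $p$ a trivial fibration; then lift $i$ against $p$ to exhibit $i$ as a retract of $j$, so the left lifting property of $j$ transfers to $i$. Right properness of $\mathcal{N}$ is inherited from that of $\mathcal{M}$ because $G$ preserves pullbacks and reflects weak equivalences and fibrations. For the Quillen equivalence criterion, $(F,G)$ is a Quillen pair by construction; for cofibrant $A$ and fibrant $X$, the adjunct of any $FA\to X$ factors as $A\to GFA\to GX$, and two-out-of-three applied to this composite shows that if $A\to GFA$ is always a weak equivalence on cofibrant $A$, then $FA\to X$ is a weak equivalence iff $A\to GX$ is one. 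The converse is obtained by testing against a fibrant replacement of $FA$. The main obstacle throughout is arranging the path object construction so that every pullback makes sense in $\mathcal{M}$, which forces the systematic use of fibrant replacements on both source and target together with an appeal to right properness to transport the weak equivalence $j_{Y}$ across the fibration $P\to \widehat{{\rm F}}Y$.
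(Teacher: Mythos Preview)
Your factorization argument, the treatment of right properness, and the Quillen equivalence criterion are correct and essentially match the paper. The gap is in your lifting argument for trivial cofibrations against fibrations. You factor a trivial cofibration $i$ as $pj$ with $j$ produced by the small object argument and $p$ a trivial fibration, then exhibit $i$ as a retract of $j$. But the only small object argument available is the one for $F(I)$, which produces $j$ as a relative $F(I)$-cell complex; such a $j$ has the left lifting property against \emph{trivial} fibrations, not against all fibrations. So your retract argument only recovers $i\in\mathrm{LLP}(\mathrm{F}\cap\mathrm{W})=\mathrm{C}$, which you already knew. There is no generating set for the trivial cofibrations here---$\mathcal{M}$ is only a generalized model category and no such set is assumed---so no retract argument of this shape can work.

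The paper closes this gap by a direct construction that uses the $(\mathrm{C}\cap\mathrm{W},\mathrm{F})$ factorization you have just built. Given a trivial cofibration $j\colon A\to B$ and a fibration $p\colon X\to Y$, factor $B\to Y$ as a trivial cofibration $B\to Y'$ followed by a fibration $Y'\to Y$, and then factor the canonical map $A\to Y'\times_{Y}X$ as a trivial cofibration $A\to X'$ followed by a fibration $X'\to Y'\times_{Y}X$. The composite $q\colon X'\to Y'$ is a fibration (composite of two fibrations) and, by two-out-of-three applied to $A\to X'$, $A\to B$, $B\to Y'$, a weak equivalence; hence $q$ is a trivial fibration. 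Now $j$, being a cofibration, lifts against $q$, and composing the lift with $X'\to X$ yields the desired diagonal filler for the original square. This is the missing idea.
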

\begin{proof}
Let $f$ be a map of $\mathcal{N}$.
We say that $f$ is a \emph{trivial fibration} 
if $G(f)$ is a trivial fibration and we say that
$f$ is a \emph{cofibration} if it is an
$F(I)$-cofibration in the sense of
\cite[Definition 10.5.2(2)]{Hi}.
By (1) and \cite[Corollary 10.5.23]{Hi}
every map of $\mathcal{N}$
can be factorized into a cofibration
followed by a trivial fibration and 
every cofibration has the left lifting 
property with respect to every trivial 
fibration. 

Let $f\colon X\to Y$ be a map of $\mathcal{N}$
such that $GX$ and $GY$ are fibrant. Then (4)
implies that we can construct the mapping 
path factorization of $f$ (see 
Section 3.0.6, for instance), that is,
$f$ can be factorized into a map $X\to Pf$ 
that is a weak equivalence followed by a 
map $Pf\to Y$ that is a fibration. 
Moreover, $GPf$ is fibrant.

We show that every map $f\colon X\to Y$
of $\mathcal{N}$ can be factorized 
into a map that is both a cofibration 
and a weak equivalence followed by 
a map that is a fibration. 
By (3) we have a commutative diagram
\begin{displaymath}
\xymatrix{
{X} \ar[r] \ar[d]_{f} &{\widehat{{\rm F}}X}
\ar[d]^{\widehat{{\rm F}}f}\\
{Y} \ar[r] &{\widehat{{\rm F}}Y}
}
\end{displaymath}
The map $\widehat{{\rm F}}f$ 
can be factorized into a map 
$\widehat{{\rm F}}X\to P\widehat{{\rm F}}f$
that is a weak equivalence followed by a map 
$P\widehat{{\rm F}}f\to \widehat{{\rm F}}Y$
that is a fibration.
Let $Z$ be the pullback of $P\widehat{{\rm F}}f
\to \widehat{{\rm F}}Y$ along $Y\to \widehat{{\rm F}}Y$.
By (2) the map $Z\to P\widehat{{\rm F}}f$ is 
a weak equivalence, therefore the canonical 
map $X\to Z$ is a weak equivalence.
We factorize $X\to Z$ into a map $X\to X'$ 
that is a cofibration followed by a map $X'\to Z$
that is a trivial fibration. The desired factorization
of $f$ is $X\to X'$ followed by the composite
$X'\to Z\to Y$.

We show that every commutative 
diagram in $\mathcal{N}$
\begin{displaymath}
\xymatrix{
{A} \ar[r] \ar[d]_{j} & {X} \ar[d]^{p}\\
{B} \ar[r] & {Y}
}
\end{displaymath}
where $j$ is both a cofibration and a 
weak equivalence and $p$ is a fibration 
has a diagonal filler. We shall construct a 
commutative diagram
\begin{displaymath}
\xymatrix{
{A} \ar[r]  \ar[d]_{j} & {X'} 
\ar[r] \ar[d]^{q} & {X} \ar[d]^{p}\\
{B} \ar[r] & {Y'} \ar[r] & {Y}
}
\end{displaymath}
with $q$ a trivial fibration. We factorize the 
map $B\to Y$ into a map $B\to Y'$
that is both a cofibration and a weak equivalence
followed by a map $Y'\to Y$ that is a fibration. 
Similarly, we factorize the canonical 
map $A\to Y'\times_{Y}X$ into a map $A\to X'$
that is both a cofibration and a weak equivalence
followed by a map $X'\to Y'\times_{Y}X$ 
that is a fibration. Let $q$ be the 
composite map $X'\to Y'$. Then $q$ is 
a trivial fibration.

The model category $\mathcal{N}$
is right proper since $\mathcal{M}$ 
is right proper.

Suppose that $(F,G)$ is a Quillen
equivalence. Let $A$ be a cofibrant 
object of $\mathcal{M}$. We can find
a weak equivalence $f\colon FA\to X$ 
with $X$ fibrant. The composite map
$A\to GFA\to GX$ is the adjunct 
of $f$, hence it is a weak
equivalence. Thus, $A\to GFA$ is a 
weak equivalence. Conversely,
let $A$ be a cofibrant object of
$\mathcal{M}$ and $X$ a fibrant
object of $\mathcal{N}$. If $FA\to X$
is a weak equivalence then its
adjunct is the composite
$A\to GFA\to GX$, which is a weak 
equivalence. If $f\colon A\to GX$
is a weak equivalence, then it factorizes
as $A\to GFA\overset{Gf'}\to GX$,
where $f'$ is the adjunct of $f$. 
Hence $Gf'$ is a weak equivalence,
which means that $f'$ is a weak equivalence.
\end{proof}
\begin{proof}[Proof of Theorem ~\ref{third}]
In Lemma 49 we take 
$\mathcal{M}={\rm Champ}(E)$,
$\mathcal{N}={\rm Cat}(\widetilde{E})$,
$F=a\mathsf{L}$, $G=\Phi i$ and
$I$ to be the set of maps
$\{f\times E_{/S}\}$ with $S\in Ob(E)$ and
$f\in \mathscr{M}$, where $\mathscr{M}$
is the set of functors such that a functor
is a surjective equivalence if and only if it 
has the right lifting property with respect
to every element of $\mathscr{M}$.

By Theorem 13 and Proposition 16(2)
a map of {\rm Fib}$(E)$ is a trivial fibration
if and only if it has the right lifting property
with respect to every element of $I$.

We shall now check the assumptions 
(1)-(4) of Lemma 49. (1) and (2) 
are clear. We check (3).
Let $X$ be a sheaf of categories.
We put $\widehat{F}X=ia\mathcal{S}\Phi X$
and the natural transformation from the identity 
functor of {\rm Cat}$(\widetilde{E})$ to 
$\widehat{F}$ to be the composite map
\begin{displaymath}
\xymatrix{
{X}\ar[r]&{\mathcal{S}\Phi X}
\ar[r]^{\mathsf{k}(\mathcal{S}\Phi X)}
&{ia\mathcal{S}\Phi X}
}
\end{displaymath}
Assumption (3) of Lemma 49 is fulfilled
by the facts (2) and (3) mentioned
right after the statement of Theorem 48.
We check (4). Let $X$ be a sheaf of 
categories such that $\Phi X$ is a stack.
Let $J$ be the groupoid with two objects
and one isomorphism between them. The 
diagonal $X\to X\times X$ factorizes as
\begin{displaymath}
\xymatrix{
{X}\ar[r]^{s} &{X^{(J)}} 
\ar[r]^{p_{0}\times p_{1}} & {X\times X}
}
\end{displaymath}
Since $\Phi$ preserves cotensors 
and the cotensor of a stack and a 
category is a stack (2.3(2)), (4)
follows from Remark 20.

We now prove that $(a\mathsf{L},\Phi i)$
is a Quillen equivalence. For this we 
use Lemma 49. For every object $F$ 
of {\rm Fib}$(E)$, the unit 
$F\to \Phi ia\mathsf{L}F$ of this 
adjoint pair is the composite 
\begin{displaymath}
\xymatrix{
{F}\ar[r]^{\mathsf{l}_{F}}&
{\Phi \mathsf{L}F}
\ar[r]^{\Phi \mathsf{k}(\mathsf{L}F)}&
{\Phi ia\mathsf{L}F}
}
\end{displaymath}
which is a bicovering map.
\end{proof}
\begin{thm}
The model category
{\rm Cat}$(\widehat{E})_{proj}$ 
admits a proper left Bousfield localization
{\rm Stack}$(\widehat{E})_{proj}$
in which the weak equivalences
and the fibrations are the maps that $\Phi$
takes into weak equivalences and fibrations
of {\rm Champ}$(E)$. The adjoint pair 
$(\mathsf{L},\Phi)$ is a Quillen equivalence 
between {\rm Champ}$(E)$ and 
{\rm Stack}$(\widehat{E})_{proj}$.
\end{thm}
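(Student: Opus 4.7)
The plan is to proceed in close analogy with the proof of Theorem 48, applying Lemma 49 with $\mathcal{M}={\rm Champ}(E)$, $\mathcal{N}={\rm Cat}(\widehat{E})$, $F=\mathsf{L}$, $G=\Phi$, and $I$ the same set of maps $\{f\times E_{/S}\}$ with $f\in\mathscr{M}$ and $S\in Ob(E)$ that was used there. By Theorem 13 and Proposition 16(2), a map of {\rm Fib}$(E)$ is a trivial fibration if and only if it has the right lifting property with respect to every element of $I$; since {\rm Champ}$(E)$ shares its cofibrations---and, by the definition of a generalized model category, its trivial fibrations---with {\rm Fib}$(E)$, the same characterization persists there.

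Assumptions $(1)$ and $(2)$ of Lemma 49 are immediate, the first from local presentability of $[E^{op},CAT]$ and the second from Theorem 29. For $(3)$ I would take $\widehat{{\rm F}}X=\mathcal{S}\mathsf{A}\Phi X$, with the natural transformation $X\to \mathcal{S}\mathsf{A}\Phi X$ adjunct to the stack-completion map $a(\Phi X)\colon \Phi X\to \mathsf{A}\Phi X$. Applying $\Phi$ factors $a(\Phi X)$ as $\Phi X\to \mathsf{S}\mathsf{A}\Phi X\overset{\mathsf{v}}\to \mathsf{A}\Phi X$, in which $\mathsf{v}$ is a trivial fibration by 2.3(4); since $a(\Phi X)$ is bicovering, the first factor is bicovering by two-out-of-three, and $\mathsf{S}\mathsf{A}\Phi X$ is itself a stack because it is $E$-equivalent via $\mathsf{v}$ to the stack $\mathsf{A}\Phi X$. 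For $(4)$ I would take the cotensor $X^{(J)}$ as a path object, exactly as in the proof of Theorem 48: $\Phi$ preserves cotensors by 2.3(2), and the cotensor of a stack with a category is a stack (Corollary 35).

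Lemma 49 then produces a right proper model structure on {\rm Cat}$(\widehat{E})$ whose weak equivalences and fibrations are as required. To identify it as a left Bousfield localization of {\rm Cat}$(\widehat{E})_{proj}$, I would observe that $f$ is a trivial fibration in the new structure if and only if $\Phi f$ is a trivial fibration in {\rm Champ}$(E)$, equivalently (by Corollary 15) an objectwise surjective equivalence, equivalently a trivial fibration of {\rm Cat}$(\widehat{E})_{proj}$. The two model structures thus have the same trivial fibrations and hence the same cofibrations, and the new class of weak equivalences trivially contains the projective ones; left properness then transfers from {\rm Cat}$(\widehat{E})_{proj}$. Finally, the Quillen equivalence is the second half of Lemma 49 applied to the unit $\mathsf{l}_A\colon A\to \Phi\mathsf{L}A$, which is fibrewise an equivalence of categories by Section 2.3 and therefore an $E$-equivalence, in particular bicovering. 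The one delicate step is the identification of cofibrations, but it falls out automatically once one notices that the trivial fibrations in {\rm Champ}$(E)$ coincide with the objectwise surjective equivalences.
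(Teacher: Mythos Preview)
Your proposal is correct and follows essentially the same approach as the paper: apply Lemma 49 with the adjoint pair $(\mathsf{L},\Phi)$ and the fibrant replacement functor $X\mapsto \mathcal{S}\mathsf{A}\Phi X$, exactly the data the paper specifies in its one-line proof. You supply more detail than the paper does, in particular the verification that the resulting structure is a left Bousfield localization of ${\rm Cat}(\widehat{E})_{proj}$ and is left proper; one small point is that the claim ``the cotensor of a stack with a category is a stack'' is more directly justified by 2.3(2) (as in the proof of Theorem 48) than by Corollary 35, though the latter also yields it after noting that $G^{(A)}$ is $E$-equivalent to $\mathsf{CART}(A\times E,G)$.
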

\begin{proof}
The proof is similar to the proof
of Theorem 48, using the adjoint
pair $(\mathsf{L},\Phi)$ and 
the fibrant replacement functor
\begin{displaymath}
\xymatrix{
{X}\ar[r]&{\mathcal{S}\Phi X}
\ar[r]^{\mathcal{S}(a\Phi X)}
&{\mathcal{S}\mathsf{A}\Phi X}
}
\end{displaymath}
\end{proof}
\begin{prop}[Compatibility with 
the $2$-category structure]
Let $A\to B$ be an injective on
objects functor and $X\to Y$
a fibration of 
{\rm Stack}$(\widetilde{E})_{proj}$.
Then the canonical map
\begin{displaymath}
\xymatrix{
{X^{(B)}} \ar[r] &{X^{(A)}
\times_{Y^{(A)}}Y^{(B)}}
}
\end{displaymath}
is a fibration that is a trivial
fibration if $A\to B$ is an 
equivalence of categories 
or $X\to Y$ is a weak equivalence.
\end{prop}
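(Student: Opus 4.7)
The plan is to transfer the problem to $\mathrm{Champ}(E)$ via the Quillen equivalence $(a\mathsf{L},\Phi i)$ of Theorem 48, and then to deduce it from Proposition 38 via the standard tensor--cotensor--hom adjunction.

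By Theorem 48, a map $f$ of $\mathrm{Cat}(\widetilde E)$ is a (trivial) fibration in $\mathrm{Stack}(\widetilde E)_{proj}$ if and only if $\Phi i f$ is a (trivial) fibration in $\mathrm{Champ}(E)$. Since $i$ and $\Phi$ are both right adjoints, $\Phi i$ preserves the pullback that appears in the statement. It also commutes with the $CAT$-cotensor: for each $S\in Ob(E)$, $(\Phi i X^{(A)})_{S}=[A,X(S)]=[A,(\Phi iX)_{S}]=((\Phi iX)^{(A)})_{S}$, and both fibred categories are constructed as the same pullback of $[A,X]\to [A,E]$ along $E\to [A,E]$. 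Hence $\Phi i$ carries the map under consideration to the analogous cotensor leibniz map
\[
(\Phi iX)^{(B)}\to (\Phi iX)^{(A)}\times_{(\Phi iY)^{(A)}}(\Phi iY)^{(B)}
\]
in $\mathrm{Fib}(E)$, and it suffices to show that this is a fibration of $\mathrm{Champ}(E)$, which is trivial when $A\to B$ is an equivalence of categories or $\Phi iX\to \Phi iY$ is a $\mathcal{C}$-local equivalence.

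To prove this $\mathrm{Champ}(E)$-analogue I would invoke Proposition 38 through the natural isomorphism (2.3(1)). Writing $v\colon H\to K$ for the fibration in $\mathrm{Champ}(E)$ and taking an arbitrary cofibration $u\colon F\to G$ of $\mathrm{Champ}(E)$, a lifting problem of $u$ against the cotensor leibniz of $i$ and $v$ corresponds bijectively, by iterating the adjunctions in (2.3(1)), to a lifting problem of $i$ in $CAT$ against the hom leibniz
\[
R\colon \mathbf{Cart}_{E}(G,H)\to \mathbf{Cart}_{E}(G,K)\times_{\mathbf{Cart}_{E}(F,K)}\mathbf{Cart}_{E}(F,H)
\]
(the pullback existing by Lemma 17(1)). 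Proposition 38 says $R$ is an isofibration, which becomes a surjective equivalence whenever $u$ or $v$ is a $\mathcal{C}$-local equivalence. In the general fibration case ($u$ a trivial cofibration, $v$ any fibration) $R$ is already a trivial fibration in $CAT$, so any cofibration $i$ lifts against it; in the two trivial-fibration cases, either $i$ is a trivial cofibration lifting against the isofibration $R$, or $v$ is a $\mathcal{C}$-local equivalence making $R$ a trivial fibration in $CAT$ against the cofibration $i$. In each case the diagonal filler exists.

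The main obstacle is purely formal: one must verify that $\Phi i$ commutes with the $CAT$-cotensor and with the relevant pullback, and that the natural isomorphism (2.3(1)) really converts lifting problems into lifting problems in the expected way, so that the SM7-style three-way equivalence (pushout-product, hom-leibniz, cotensor-leibniz) may be applied within the generalized model category setting. All of these checks are routine because the required (co)limits exist in $\mathrm{Fib}(E)$ (the pullback is a pullback of isofibrations by Lemma 17(1), and the cotensor is defined by a limit preserved by $\Phi$, which admits both adjoints).
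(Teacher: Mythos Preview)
Your proposal is correct and follows the same route as the paper: transfer the question to $\mathrm{Champ}(E)$ using that $\Phi$ (hence $\Phi i$) preserves cotensors and the relevant pullback, and then deduce the cotensor--Leibniz statement from Proposition~38. The paper's proof is a single sentence (``Since $\Phi$ preserves cotensors, the Proposition follows from Proposition~38''); you have simply unfolded the standard SM7-style adjunction argument---via the isomorphism ${\bf Cart}_{E}(A\times F,G)\cong [A,{\bf Cart}_{E}(F,G)]\cong {\bf Cart}_{E}(F,G^{(A)})$---that converts the hom--Leibniz statement of Proposition~38 into the cotensor--Leibniz statement required here.
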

\begin{proof}
Since $\Phi$ preserves cotensors,
the Proposition follows from 
Proposition 38.
\end{proof}
We recall \cite[Theorem 4]{JT}
that {\rm Cat}$(\widetilde{E})$
is a model category in which
the weak equivalences are the 
maps that $\Phi$ takes into
bicovering maps and the cofibrations
are the internal functors that are 
monomorphisms on objects. 
See Appendix 3 for another 
approach to this result. We denote 
this model category by 
{\rm Stack}$(\widetilde{E})_{inj}$.
\begin{prop}
The identity functors on
${\rm Cat}(\widetilde{E})$ form
a Quillen equivalence between
{\rm Stack}$(\widetilde{E})_{proj}$
and {\rm Stack}$(\widetilde{E})_{inj}$.
\end{prop}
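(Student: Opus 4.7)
The plan rests on one observation: the two model structures on {\rm Cat}$(\widetilde{E})$ share the same class of weak equivalences. By the construction in Theorem 48, a map $f$ is a weak equivalence in {\rm Stack}$(\widetilde{E})_{proj}$ iff $\Phi f$ is a weak equivalence in {\rm Champ}$(E)$, i.e.\ bicovering; and the description of weak equivalences in {\rm Stack}$(\widetilde{E})_{inj}$ recalled just before the Proposition is literally the same.

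Consequently, any Quillen adjunction between the two structures via identity functors is automatically a Quillen equivalence: for $A$ cofibrant in the source and $X$ fibrant in the target, the map $A\to X$ and its adjunct $A\to X$ are the same map of {\rm Cat}$(\widetilde{E})$, so one is a weak equivalence iff the other is. The task therefore reduces to showing that the identity ${\rm Stack}(\widetilde{E})_{proj}\to {\rm Stack}(\widetilde{E})_{inj}$ is left Quillen; and since the two classes of weak equivalences coincide, preservation of trivial cofibrations is automatic once we have preservation of cofibrations. So I only need to show that every projective cofibration is a monomorphism on objects.

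I would proceed as follows. The proof of Theorem 48 (via Lemma 49) presents ${\rm Stack}(\widetilde{E})_{proj}$ as cofibrantly generated by $a\mathsf{L}(I)$, where $I=\{f\times E_{/S}:f\in \mathscr{M},\ S\in Ob(E)\}$ is the set used in the proof of Theorem 48 and each $f\in \mathscr{M}$ is injective on objects. The class of monomorphisms on objects in ${\rm Cat}(\widetilde{E})$ is closed under transfinite composition, pushout and retract (since $Ob\colon {\rm Cat}(\widetilde{E})\to \widetilde{E}$ is a left adjoint and monomorphisms in $\widetilde{E}$ are preserved by these colimits), so it suffices to check each generator $a\mathsf{L}(f\times E_{/S})$ is a monomorphism on objects. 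Now $f\times E_{/S}$ is injective on objects, hence a cofibration in {\rm Fib}$(E)$; by Proposition 24, $\mathsf{L}\colon {\rm Fib}(E)\to [E^{op},CAT]_{proj}$ takes it to a projective cofibration, which is in particular objectwise a cofibration in $CAT$ and therefore objectwise a monomorphism on objects—equivalently a monomorphism on objects in ${\rm Cat}(\widehat{E})$; and finally the sheafification $a$ preserves finite limits, hence monomorphisms.

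The main obstacle—really the only non-routine ingredient—is recognizing that projective cofibrations in $[E^{op},CAT]_{proj}$ are objectwise monomorphisms on objects, which is clean because the generating cofibrations of $CAT$ are injective on objects and this property is preserved under the coproducts with representables and under the colimits used to build cells. Everything else is assembly: chain the preservation properties of $\mathsf{L}$ and $a$, reduce the Quillen adjunction to cofibrations by the coincidence of weak equivalences, and invoke the argument of the second paragraph to upgrade from Quillen adjunction to Quillen equivalence.
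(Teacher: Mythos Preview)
Your proposal is correct and follows essentially the same approach as the paper: both reduce to showing that the generating projective cofibrations $a\mathsf{L}(f\times E_{/S})$ are monomorphisms on objects by observing that $\mathsf{L}(f\times F)$ is objectwise injective on objects (via Proposition 24) and that the associated sheaf functor $a$ preserves this property, then conclude from the coincidence of weak equivalences. You are somewhat more explicit than the paper about the closure argument for cofibrations and about why a Quillen pair with identical weak equivalences is automatically a Quillen equivalence, but the substance is the same.
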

\begin{proof}
We show that the identity functor
${\rm Stack}(\widetilde{E})_{proj}
\to{\rm Stack}(\widetilde{E})_{inj}$
preserves cofibrations. For that, 
it suffices to show that for every
object $F$ of {\rm Fib}$(E)$ and every
injective on objects functor $f$, the map
$a\mathsf{L}(f\times F)$ is a cofibration
of {\rm Stack}$(\widetilde{E})_{inj}$.
The map $\mathsf{L}(f\times F)$
is objectwise injective on objects
(see the proof of Proposition 24),
which translates in {\rm Cat}$(\widehat{E})$
as: $\mathsf{L}(f\times F)$ is an internal 
functor having the property that is a 
monomorphism on objects. But the 
associated sheaf functor $a$ is known 
to preserve this property.

Since the classes of weak equivalences
of the two model categories are the 
same, the result follows.
\end{proof}
Let $E'$ be another small site and 
$f^{-1}\colon E\to E'$ be the functor
underlying a morphisms of sites
$f\colon E'\to E$ \cite[Chapitre 0 
D\'{e}finition 3.3]{Gi}. The adjoint 
pair $f^{\ast}\colon \widetilde{E}
\rightleftarrows \widetilde{E'}\colon f_{\ast}$
induces an adjoint pair 
$f^{\ast}\colon {\rm Cat}(\widetilde{E})
\rightleftarrows {\rm Cat}(\widetilde{E'})
\colon f_{\ast}$.
\begin{prop}[Change of site]
The adjoint pair $(f^{\ast},f_{\ast})$
is a Quillen pair between 
{\rm Stack}$(\widetilde{E})_{proj}$
and {\rm Stack}$(\widetilde{E'})_{proj}$.
\end{prop}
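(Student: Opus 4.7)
I plan to verify that $f_{*}$ preserves trivial fibrations and fibrations between fibrant objects, which suffices to establish that $(f^{*},f_{*})$ is a Quillen pair by the criterion from Section~3.5 due to Joyal. The starting point is a natural isomorphism
\begin{equation*}
\Phi i \circ f_{*} \;\cong\; (f^{-1})_{\bullet}^{fib} \circ \Phi i
\end{equation*}
of functors {\rm Cat}$(\widetilde{E'}) \to {\rm Fib}(E)$, where $(f^{-1})_{\bullet}^{fib}$ denotes pullback along the underlying functor $f^{-1}\colon E \to E'$ of the morphism of sites $f$ (Section~2.4). Both sides, applied to $X'$, produce the fibered category over $E$ whose fiber over $S$ is $X'(f^{-1}(S))$, with transition maps inherited from those of $X'$.

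For trivial fibrations the verification is immediate: by Corollary~15 a trivial fibration $g$ in {\rm Stack}$(\widetilde{E'})_{proj}$ is exactly a map with $g(S')$ a surjective equivalence for every $S' \in Ob(E')$, so $(f_{*}g)(S)=g(f^{-1}(S))$ is a surjective equivalence for every $S \in Ob(E)$. For fibrations between fibrant objects, the fibrant objects of {\rm Stack}$(\widetilde{E'})_{proj}$ are the sheaves of categories $X'$ with $\Phi iX'$ a stack, and by Lemma~31 a fibration between such corresponds to a map $g$ with $\Phi ig$ an isofibration between stacks over $E'$. Since $(f^{-1})_{\bullet}^{fib}$ preserves isofibrations by Lemma~17(1), it remains only to show that $(f^{-1})_{\bullet}^{fib}$ sends stacks over $E'$ to stacks over $E$---the morphism-of-sites analog of Proposition~43---and then invoke Lemma~31 in the opposite direction.

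To establish that $(f^{-1})_{\bullet}^{fib}$ preserves stacks, for $F'$ a stack over $E'$ and $R \subset E_{/S}$ a refinement in $E$, I will use the adjunction formula~2.3(7) to identify the restriction square defining property $P$ for $(f^{-1})_{\bullet}^{fib}F'$ with the square ${\bf Cart}_{E'}(E_{/S},F') \to {\bf Cart}_{E'}(R,F')$, in which $E_{/S}$ and $R$ are regarded as categories over $E'$ by composition with $f^{-1}$. The continuity of $f^{-1}$, inherent in being the underlying functor of a morphism of sites in Giraud's sense, ensures that the sieve of $E'_{/f^{-1}(S)}$ generated by the image of $R$ is a refinement of $f^{-1}(S)$. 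Factoring through the slice $E'_{/f^{-1}(S)}$ and using that the restriction of $F'$ there is a stack (Proposition~43 applied to the fibration in groupoids $E'_{/f^{-1}(S)} \hookrightarrow E'$), one can compare the displayed map with the stack condition for $F'$ at this image refinement to deduce the desired equivalence.

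The main obstacle will be this final comparison, because $f^{-1}\colon E \to E'$ is not assumed to be a fibration in groupoids; in particular the induced functor $E_{/S} \to E'_{/f^{-1}(S)}$ need not be an $E'$-equivalence, so Proposition~43 cannot be applied to $f^{-1}$ directly. The workaround is to exploit only the continuity of $f^{-1}$ and the stack property of $F'$ restricted to the slice over $f^{-1}(S)$, comparing sieves in $E_{/S}$ with their generated sieves in $E'_{/f^{-1}(S)}$, rather than any equivalence-like property of the slice functor itself.
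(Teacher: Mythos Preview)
Your overall strategy matches the paper's exactly: both use the compatibility $\Phi i \circ f_{*} \cong (f^{-1})_{\bullet}^{fib}\circ \Phi' i'$ together with Joyal's criterion from Section~3.5, reducing the problem to showing that $(f^{-1})_{\bullet}^{fib}$ preserves trivial fibrations, isofibrations, and stacks. The paper's proof is much shorter only because it outsources the stack-preservation step to \cite[Chapitre II Proposition 3.1.1]{Gi} rather than arguing it directly.

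There is, however, a gap in your direct argument for stack preservation that you did not flag. You invoke formula 2.3(7) to rewrite ${\bf Cart}_{E}(-,(f^{-1})_{\bullet}^{fib}F')$ as ${\bf Cart}_{E'}(-,F')$, but 2.3(7) is derived from the adjunction $(m^{\bullet},m_{\bullet}^{fib})$, and in the paper that adjunction is only set up when $m$ is fibred in groupoids---precisely the hypothesis you correctly note is missing for $f^{-1}$ when you rule out a direct appeal to Proposition~43. Without it, the composites $E_{/S}\to E'$ and $R\to E'$ need not be fibrations, so the expression ${\bf Cart}_{E'}(E_{/S},F')$ is not even well-formed in the sense used throughout the paper. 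Your proposed workaround through the slice $E'_{/f^{-1}(S)}$ points in the right direction (and is close in spirit to how Giraud argues), but as stated it does not close the gap: you would still have to identify ${\bf Cart}_{E}(R,(f^{-1})_{\bullet}^{fib}F')$ with a category of descent data for $F'$ along the sieve of $E'_{/f^{-1}(S)}$ generated by the image of $R$, and this is genuine work not supplied by the tools already assembled in the paper. Citing Giraud, as the paper does, is the cleanest route.
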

\begin{proof}
Consider the diagram
\begin{displaymath}
\xymatrix{
{{\rm Fib}(E)}&{{\rm Fib}(E')}\ar[l]_{f_{\bullet}^{fib}}\\
{[E^{op},CAT]}\ar[u]^{\Phi}&{[E'^{op},CAT]}
\ar[u]_{\Phi'}\ar[l]^{f_{\ast}}\\
{{\rm Cat}(\widetilde{E})}\ar[u]^{i}
&{{\rm Cat}(\widetilde{E'})}\ar[l]^{f_{\ast}}
\ar[u]_{i'}
}
\end{displaymath}
where $f_{\bullet}^{fib}$ was defined 
in Section 2.3 and  $f_{\ast}\colon 
[E'^{op},CAT]\to [E^{op},CAT]$ is the 
functor obtained by composing with $f$.
It is easy to check that the functor 
$f_{\bullet}^{fib}$ preserves 
isofibrations and trivial fibrations.
By \cite[Chapitre II Proposition 3.1.1]{Gi}
it also preserves stacks. Since 
$f_{\bullet}^{fib}\Phi '=\Phi
f_{\ast}$, it follows that $f_{\ast}$
preserves trivial fibrations and the 
fibrations between fibrant objects.
\end{proof}
Let $p\colon C\to I$ be a fibred site 
\cite[Expos\'{e} vi 7.2.1]{AGV2}
and $\tilde{p}\colon \tilde{C}^{/I}\to I$
be the (bi)fibred topos associated to $p$
\cite[Expos\'{e} vi 7.2.6]{AGV2}.
Using the above considerations 
we obtain a bifibration 
${\rm Cat}(\tilde{C}^{/I})\to I$ whose 
fibres are isomorphic to 
${\rm Cat}(\widetilde{C_{i}})$, hence
by they are model categories. 
Moreover, by Proposition 53 the inverse 
and direct image functors are Quillen pairs.

An elementary example of a fibred site
is the Grothendieck construction associated
to the functor that sends a topological
space $X$ to the category $\mathcal{O}(X)$
whose objects are the open 
subsets of $X$ and whose arrows 
are the inclusions of subsets.
\begin{prop}
Let $E$ and $E'$ be two small sites 
and $f\colon E\to E'$ be a category
fibred in groupoids. Suppose that
for every $S\in Ob(E)$, the map 
$E_{/S}\to E'_{/f(S)}$ 
sends a refinement of $S$
to a refinement of $f(S)$. Then
$f$ induces a Quillen pair between 
{\rm Stack}$(\widetilde{E})_{proj}$
and {\rm Stack}$(\widetilde{E'})_{proj}$.
\end{prop}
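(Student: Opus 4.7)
The plan is to extend the strategy of Proposition 53 to this setting. Since $f$ is not given as a morphism of sites, I will first exhibit by hand an adjoint pair between ${\rm Cat}(\widetilde{E})$ and ${\rm Cat}(\widetilde{E'})$, and then use Proposition 43 and Theorem 48 to verify that it is a Quillen pair.

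The first step is to consider the precomposition functor $\widetilde{f}^{\ast}\colon [E'^{op},CAT]\to [E^{op},CAT]$, $Y\mapsto Y\circ f^{op}$, and to show that it restricts to a functor $f^{\ast}\colon {\rm Cat}(\widetilde{E'})\to {\rm Cat}(\widetilde{E})$. Fix $Y\in {\rm Cat}(\widetilde{E'})$, $S\in Ob(E)$ and a refinement $R$ of $S$. Since $f$ is fibred in groupoids, by the characterization recalled in Section 2.3 the functor $f_{/S}\colon E_{/S}\to E'_{/f(S)}$ is a surjective equivalence; since $R$ is a full subcategory of $E_{/S}$, its corestriction $f_{/S}\colon R\to f_{/S}(R)$ is again a surjective equivalence. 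The refinement hypothesis says that $f_{/S}(R)$ is a refinement of $f(S)$, so the sheaf condition of Lemma 47(c) for $Y$ gives $Y(f(S))\cong \underset{f_{/S}(R)^{op}}{\rm lim}Y$; transporting along the equivalence $f_{/S}\colon R\to f_{/S}(R)$ yields the sheaf condition of Lemma 47(c) for $\widetilde{f}^{\ast}Y$ at $R\subset E_{/S}$. The functor $f^{\ast}$ admits a left adjoint $f_{\sharp}:=a\widetilde{f}_{!}i$, where $\widetilde{f}_{!}$ is the left Kan extension along $f^{op}$ and $(a,i)$ is the sheafification adjunction between ${\rm Cat}(\widehat{E'})$ and ${\rm Cat}(\widetilde{E'})$ recalled just before Theorem 48.

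The second step is to verify the identity
\[
\Phi i f^{\ast}=f_{\bullet}^{fib}\Phi' i'
\]
of functors ${\rm Cat}(\widetilde{E'})\to {\rm Fib}(E)$. This is a direct unwinding of the Grothendieck construction: for $Y\in {\rm Cat}(\widetilde{E'})$, the objects of both sides over $S\in Ob(E)$ are the pairs $(S,x)$ with $x\in Y(f(S))$, and morphisms match. With this in hand, suppose $g$ is a fibration (respectively, a trivial fibration) of {\rm Stack}$(\widetilde{E'})_{proj}$. By Theorem 48, $\Phi'i'g$ is a fibration (respectively, trivial fibration) in {\rm Champ}$(E')$, and by Proposition 43 the functor $f_{\bullet}^{fib}$ is right Quillen, so $f_{\bullet}^{fib}\Phi'i'g=\Phi i f^{\ast}g$ is a fibration (respectively, trivial fibration) in {\rm Champ}$(E)$. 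Invoking Theorem 48 once more, $f^{\ast}g$ is a fibration (respectively, trivial fibration) in {\rm Stack}$(\widetilde{E})_{proj}$. This proves that $(f_{\sharp},f^{\ast})$ is a Quillen pair.

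The only non-routine step is the first one, where both hypotheses on $f$ are used in an essential way: the refinement condition provides a refinement of $f(S)$ at which to apply the sheaf property of $Y$, and the fibred-in-groupoids condition supplies the equivalence of indexing diagrams needed to identify the two limits. Everything after that reduces to bookkeeping and a direct application of the change of base result for {\rm Champ}$(E)$ established in Proposition 43.
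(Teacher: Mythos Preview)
Your proof is correct and follows the same architecture as the paper's: construct the right adjoint $f^{\ast}$ on sheaves of categories, identify $\Phi i f^{\ast}$ with $f_{\bullet}^{fib}\Phi' i'$, and conclude via Proposition~43 and the definition of {\rm Stack}$(\widetilde{E})_{proj}$ in Theorem~48. The one substantive difference is in how you verify that $f^{\ast}$ preserves sheaves. You argue directly from Lemma~47(c), using that the surjective equivalence $f_{/S}\colon R\to f_{/S}(R)$ is an initial functor and hence induces an isomorphism on limits; the paper instead reduces to sheaves of \emph{sets}, invokes Example~28 (sheaves are stacks), and applies Proposition~43 once more via the identity $\Phi D f^{\ast}X=f_{\bullet}^{fib}\Phi' DX$. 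Your route is more elementary and self-contained; the paper's route reuses the stack machinery already in place and avoids the (easy but unstated) lemma that surjective equivalences are initial. One notational slip: in $f_{\sharp}=a\widetilde{f}_{!}i$, the $i$ you need is the inclusion ${\rm Cat}(\widetilde{E})\subset {\rm Cat}(\widehat{E})$ for $E$, while $a$ is sheafification for $E'$; they are not the two halves of a single adjunction as your parenthetical suggests (the paper writes this as $a'f_{!}i$).
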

\begin{proof}
The proof is similar to the
proof of Proposition 53.
Consider the solid arrow diagram
\begin{displaymath}
\xymatrix{
{{\rm Fib}(E)}&{{\rm Fib}(E')}\ar[l]_{f_{\bullet}^{fib}}\\
{[E^{op},CAT]}\ar@<.5ex>[r]^{f_{!}}
\ar[u]^{\Phi}\ar@<.5ex>[d]^{a}
&{[E'^{op},CAT]}\ar@<.5ex>[d]^{a'}\ar[u]_{\Phi '}
\ar@<.5ex>[l]^{f^{\ast}}\\
{{\rm Cat}(\widetilde{E})}\ar@<.5ex>[u]^{i}
&{{\rm Cat}(\widetilde{E'})}\ar@{.>}[l]^{f^{\ast}}
\ar@<.5ex>[u]^{i'}
}
\end{displaymath}
where $f_{!}$ is the left adjoint to
the functor $f^{\ast}$ obtained by 
composing with $f$. We claim 
that the composition with $f$ functor 
$f^{\ast}\colon \widehat{E'}\to\widehat{E}$
preserves sheaves. Let  $X$ be a sheaf
on $E'$. By Example 28 it suffices to
show that $\Phi Df^{\ast}X$ is a stack.
But $\Phi Df^{\ast}X=f_{\bullet}^{fib}
\Phi 'DX$, so $f^{\ast}X$ is a sheaf by 
Proposition 43. Therefore, $f^{\ast}$
induces a functor $f^{\ast}\colon
{\rm Cat}(\widetilde{E'})\to
{\rm Cat}(\widetilde{E})$.
Since $f^{\ast}i'=if^{\ast}$, a
formal argument implies that 
$a'f_{!}i$ is left adjoint to $f^{\ast}$.

The fact that $(a'f_{!}i,f^{\ast})$
is a Quillen pair follows from 
Proposition 43.
\end{proof}
Here is an application of 
Proposition 54. For every
$S\in Ob(E)$, the category
$E_{/S}$ has the induced topology
\cite[Chapitre 0 3.1.4]{Gi}.
A map $T\to S$ of $E$ induces 
a category fibred in groupoids
$E_{/T}\to E_{/S}$. The assumption
of Proposition 54 is satisfied. 
By \cite[Chapitre II Proposition 3.4.4]{Gi} 
we obtain a stack over $E$ whose 
fibres are model categories and 
such that the inverse and direct 
image functors are Quillen pairs.

\subsection*{Sheaves of categories 
are prestacks}
Let $E$ be a site. We recall that an object $F$
of {\rm Fib}$(E)$ is a prestack if for every
$S\in Ob(E)$ and every refinement 
$R\subset E_{/S}$ of $S$, the 
restriction functor
\begin{displaymath}
{\bf Cart}_{E}(E_{/S},F)\to {\bf Cart}_{E}(R,F)
\end{displaymath}
is full and faithful. 

We give here an essentially-from-the-definition 
proof of \cite[Chapitre II 2.2.1]{Gi},
namely that if $X\in [E^{op},CAT]$
is a sheaf of categories
then $\Phi X$ is a prestack. 

Let first $X\in [E^{op},CAT]$. 
Let $D\colon SET\to CAT$ be the
discrete category functor; it
induces a functor $D\colon 
\widehat{E}\to [E^{op},CAT]$.
Let $R'$ be the sub-presheaf of 
$\eta(S)$ which corresponds to $R$.
We have the following commutative 
diagram
\begin{displaymath}
\xymatrix{
{{\bf Cart}_{E}(E_{/S},\Phi X)}
\ar[r]\ar[dd]&{(\Phi X)_{S}=
\underline{Hom}(D\eta(S),X)}
\ar[d]^{(I)}\\
&{\underline{Hom}(DR',X)}\ar[d]^{(II)}\\
{{\bf Cart}_{E}(R,\Phi X)}\ar[r]
&{\underline{Hom}(DR',\mathcal{S}\Phi X)}
}
\end{displaymath}
The top horizontal 
arrow is a surjective equivalence (2.3(5)).
Since $(\Phi,\mathcal{S})$ is a 
$2$-adjunction, the bottom horizontal 
arrow is an isomorphism. We will show 
below that the map $(II)$ is full and 
faithful. If $X$ is now a sheaf of 
categories, then the map $(I)$
is an isomorphism by Lemma 47,
therefore in this case $\Phi X$ is a prestack.

Let $P\in \widehat{E}$. We denote by
$E_{/P}$ the category $\Phi DP$.
 Let $m\colon E_{/P}\to E$
be the canonical map. The natural functor
\begin{displaymath}
m^{\ast}\colon [E^{op},CAT]
\to [(E_{/P})^{op},CAT]
\end{displaymath}
has a left adjoint $m_{!}$
that is the left Kan extension
along $m^{op}$. Since 
$m^{op}$ is an opfibration,
$m_{!}$ has a simple description.
For example, let $A$ be a category
and let $cA\in [(E_{/P})^{op},CAT]$ 
be the constant object at $A$; then 
$m_{!}cA$ is the tensor between $A$
and $DP$ in the $2$-category
$[E^{op},CAT]$. It follows that 
for every $X\in [E^{op},CAT]$ 
we have an isomorphism
\begin{displaymath}
\underset{(E_{/P})^{op}}{\rm lim}m^{\ast}
X\cong \underline{Hom}(DP,X)
\end{displaymath}
The map
$X\to \mathcal{S}\Phi X$ is objectwise
both an equivalence of categories and
injective on objects, hence so is the map
$m^{\ast}X\to m^{\ast}\mathcal{S}\Phi X$.
Therefore the map
\begin{displaymath}
\underset{(E_{/P})^{op}}{\rm lim}m^{\ast}
X\to \underset{(E_{/P})^{op}}{\rm lim}m^{\ast}
\mathcal{S}\Phi X
\end{displaymath}
is both full and faithful 
and injective on objects.

\section{Appendix 1: Stacks vs. 
the homotopy sheaf condition}

Throughout this section $E$
is a site whose topology
is generated by a pretopology.

\subsection{}
We recall that the model category
$CAT$ is a simplicial model category.
The cotensor $A^{(K)}$ between
a category $A$ and a simplicial set
$K$ is constructed as follows.

Let {\bf S} be the category of 
simplicial sets. Let $cat\colon {\bf S}\to CAT$ 
be the fundamental category functor,
left adjoint to the nerve functor.
Let $(-)_{1}^{-1}\colon CAT\to GRPD$
be the free groupoid functor, left
adjoint to the inclusion functor. Then
\begin{displaymath}
A^{(K)}=[(cat K)_{1}^{-1},A]
\end{displaymath}
One has $A^{(\Delta[n])}=[J^{n},A]$,
where $J^{n}$ is the free groupoid on $[n]$.

Let {\bf X} be a cosimplicial
object in $CAT$. The total object
of {\bf X} \cite[Definition 18.6.3]{Hi}
is calculated as
\begin{displaymath}
{\rm Tot}{\bf X}=\underline{Hom}(J,{\bf X})
\end{displaymath}
where $\underline{Hom}$ is the $CAT$-hom 
of the $2$-category $[\Delta,CAT]$ and $J$ 
is the cosimplicial object in $CAT$
that $J^{n}$ defines. The category
$\underline{Hom}(J,{\bf X})$ has a 
simple description. For $n\geq 2$,
$J^{n}$ is constructed from $J^{1}$
by iterated pushouts, so by adjunction 
an object of $\underline{Hom}(J,{\bf X})$
is a pair $(x,f)$, where $x\in Ob({\bf X}^{0})$
and $f\colon d^{1}(x)\to d^{0}(x)$
is an isomorphism of ${\bf X}^{1}$ 
such that $s^{0}(f)$ is the identity on 
$x$ and $d^{1}(f)=d^{0}(f)d^{2}(f)$.
An arrow $(x,f)\to (y,g)$ is an arrow
$u\colon x\to y$ of ${\bf X}^{0}$
such that $d^{0}(u)f=gd^{1}(u)$.

If {\bf X} moreover a coaugmented cosimplicial
object in $CAT$ with coaugmentation 
${\bf X}^{-1}$, there is a natural map
\begin{displaymath}
{\bf X}^{-1}\to {\rm Tot}{\bf X}
\end{displaymath}
We recall \cite[Theorem 18.7.4(2)]{Hi} 
that if {\bf X} is Reey fibrant in
$[\Delta,CAT]$, then the natural map
\begin{displaymath}
{\rm Tot}{\bf X}\to {\rm holim}{\bf X}
\end{displaymath}
is an equivalence of categories.

\subsection{}
For each $S\in Ob(E)$ and each
covering family $\mathscr{S}=(S_{i}\to  S)_{i\in I}$ 
there is a simplicial object $E_{/\mathscr{S}}$
in {\rm Fib}$(E)$ given by
\begin{displaymath}
(E_{/\mathscr{S}})_{n}=
\underset{i_{0},...,i_{n}\in I^{n+1}}
\coprod E_{/S_{i_{0},...,i_{n}}}
\end{displaymath}
where $S_{i_{0},...,i_{n}}=
S_{i_{0}}\times_{S}...\times_{S}S_{i_{n}}$.
$E_{/\mathscr{S}}$ is augmented
with augmentation $E_{/S}$.
\begin{prop}
An object $F$ of {\rm Fib}$(E)$
is a stack if and only if for every 
$S\in Ob(E)$ and every covering family 
$\mathscr{S}=(S_{i}\to  S)$,
the natural map
\begin{displaymath}
{\bf Cart}_{E}(E_{/S},F)\to 
{\rm Tot}{\bf Cart}_{E}(E_{/\mathscr{S}},F)
\end{displaymath}
is an equivalence of categories.
\end{prop}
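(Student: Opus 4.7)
The plan is to identify both sides with descent data for $F$ along $\mathscr{S}$ and then invoke the pretopology hypothesis to match the resulting condition with Definition 27.

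Using the surjective equivalences ${\bf Cart}_E(E_{/T},F)\simeq F_T$ from 2.3(5) and the fact that ${\bf Cart}_E(-,F)$ carries coproducts in {\rm Fib}$(E)$ to products in $CAT$, the cosimplicial object ${\bf Cart}_E(E_{/\mathscr{S}},F)$ is levelwise equivalent to the cosimplicial category whose $n$-th level is $\prod_{i_0,\dots,i_n}F_{S_{i_0\cdots i_n}}$, with cofaces and codegeneracies induced by pulling back along projections and inserting identities along diagonals. Substituting into the explicit description of ${\rm Tot}\,{\bf X}$ recalled in Section 7.1 identifies the objects of ${\rm Tot}\,{\bf Cart}_E(E_{/\mathscr{S}},F)$ with tuples $(x_i)_{i\in I}$, $x_i\in Ob(F_{S_i})$, equipped with isomorphisms $\pi_1^{\ast}x_i\cong \pi_2^{\ast}x_j$ in $F_{S_{ij}}$ satisfying the identity condition on diagonals and the cocycle condition over $S_{ijk}$---that is, descent data for $F$ relative to $\mathscr{S}$, with morphisms the obvious compatible families in $\prod_i F_{S_i}$.

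Next, let $R_{\mathscr{S}}\subset E_{/S}$ denote the sieve generated by $\mathscr{S}$. The canonical map $E_{/\mathscr{S}}\to R_{\mathscr{S}}$ (with the latter regarded as a constant simplicial object) induces a factorization
\begin{displaymath}
{\bf Cart}_E(E_{/S},F)\longrightarrow {\bf Cart}_E(R_{\mathscr{S}},F)\overset{\alpha}{\longrightarrow} {\rm Tot}\,{\bf Cart}_E(E_{/\mathscr{S}},F).
\end{displaymath}
I would prove directly that $\alpha$ is an equivalence of categories for \emph{every} $F\in {\rm Fib}(E)$: a cartesian functor $\varphi\colon R_{\mathscr{S}}\to F$ produces the objects $\varphi(S_i\to S)\in Ob(F_{S_i})$ together with the canonical descent isomorphisms obtained from the two cartesian lifts out of $\varphi(S_{ij}\to S)$; conversely descent data extend to a cartesian functor on the whole sieve by choosing a cartesian lift at each $(T\to S)\in R_{\mathscr{S}}$ along any factorisation through some $S_i$, the cocycle condition ensuring well-definedness up to canonical isomorphism. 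Granting this, the proposition reduces to the claim that $F$ is a stack if and only if ${\bf Cart}_E(E_{/S},F)\to {\bf Cart}_E(R_{\mathscr{S}},F)$ is an equivalence for every covering family $\mathscr{S}$. One direction is immediate from Definition 27 since each $R_{\mathscr{S}}$ is a refinement. For the converse, given an arbitrary refinement $R$, the pretopology hypothesis provides for every $(T\to S)\in R$ a cover $(T_i\to T)$ with each composite $T_i\to S$ lying in $R_{\mathscr{S}}\subset R$; applying the assumed equivalence at $T$ then promotes full faithfulness and essential surjectivity from $R_{\mathscr{S}}$ to $R$, so $F$ is $\mathcal{C}$-local.

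The main technical obstacle is establishing that $\alpha$ is an equivalence. The descent-theoretic identification of ${\bf Cart}_E(R_{\mathscr{S}},F)$ is intuitively clear, but an object $(T\to S)$ of $R_{\mathscr{S}}$ may factor through several $S_i$'s simultaneously, so showing that the two constructions above are functorial, independent of choices, and mutually quasi-inverse requires a careful cocycle manipulation combined with the universal property of cartesian arrows.
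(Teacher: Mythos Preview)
Your proposal is correct and is exactly the unraveling of definitions that the paper's one-line proof (``The proof consists of unraveling the definitions.'') leaves to the reader: identify ${\rm Tot}$ with descent data via the explicit description in Section~7.1, identify ${\bf Cart}_E(R_{\mathscr{S}},F)$ with the same, and reduce the stack condition from arbitrary refinements to those generated by covering families using the pretopology hypothesis. The two points you flag as requiring care---the equivalence $\alpha$ and the final reduction step---are standard descent-theoretic facts (cf.\ the paper's reference to Vistoli) and are indeed the only places where more than bookkeeping is needed.
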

\begin{proof}
The proof consists of 
unraveling the definitions.
\end{proof}
Following \cite{Ho1}, we say that
an object $F$ of {\rm Fib}$(E)$ 
\emph{satisfies the homotopy sheaf
condition} if for every $S\in Ob(E)$ and 
every covering family $\mathscr{S}=(S_{i}\to  S)$,
the natural map
\begin{displaymath}
{\bf Cart}_{E}(E_{/S},F)\to 
{\rm holim}{\bf Cart}_{E}(E_{/\mathscr{S}},F)
\end{displaymath}
is an equivalence of categories.
\begin{prop}\cite[Theorem 1.1]{Ho1}
An object of {\rm Fib}$(E)$ 
satisfies the homotopy sheaf
condition if and only if it is a stack.
\end{prop}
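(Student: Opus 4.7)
The plan is to deduce this from Proposition 55 by showing that the two totalizations appearing in the definitions of ``stack'' and ``satisfies the homotopy sheaf condition'' are equivalent, via the Reedy fibrancy criterion recalled at the end of Section 7.1 (i.e.\ \cite[Theorem 18.7.4(2)]{Hi}). Concretely, given an object $F$ of {\rm Fib}$(E)$, a cover $\mathscr{S}=(S_{i}\to S)$, and the cosimplicial object ${\bf X}^{\bullet}={\bf Cart}_{E}(E_{/\mathscr{S}},F)$ in $CAT$, the goal is to prove that ${\bf X}^{\bullet}$ is Reedy fibrant in $[\Delta,CAT]$. Once this is established, the natural comparison ${\rm Tot}{\bf X}\to {\rm holim}{\bf X}$ is an equivalence of categories, so by the two-out-of-three property the map
\begin{displaymath}
{\bf Cart}_{E}(E_{/S},F)\to {\rm Tot}{\bf Cart}_{E}(E_{/\mathscr{S}},F)
\end{displaymath}
is an equivalence if and only if
\begin{displaymath}
{\bf Cart}_{E}(E_{/S},F)\to {\rm holim}{\bf Cart}_{E}(E_{/\mathscr{S}},F)
\end{displaymath}
is, and Proposition 55 finishes the argument.

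To verify Reedy fibrancy, I would first identify the latching objects of the simplicial object $E_{/\mathscr{S}}$ in {\rm Fib}$(E)$. Since
\begin{displaymath}
(E_{/\mathscr{S}})_{n}=\coprod_{(i_{0},\dots,i_{n})}E_{/S_{i_{0},\dots,i_{n}}},
\end{displaymath}
the $n$-th latching object $L_{n}(E_{/\mathscr{S}})$ is the sub-coproduct indexed by degenerate tuples (those in which some $i_{j}=i_{j+1}$), and the latching map $L_{n}(E_{/\mathscr{S}})\to (E_{/\mathscr{S}})_{n}$ is an inclusion of a sub-coproduct, in particular a cofibration of {\rm Fib}$(E)$ in the sense of Theorem 13. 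Since $F\to E$ is a fibration and ${\bf Cart}_{E}(-,E)$ is terminal in $CAT$, the compatibility statement of Proposition 21 (applied with $v\colon F\to E$) shows that ${\bf Cart}_{E}(-,F)$ sends each such cofibration to an isofibration in $CAT$. Using that ${\bf Cart}_{E}(-,F)$ turns coproducts into products and pushouts (of the kind appearing in the matching-object construction) into pullbacks, the $n$-th matching map of ${\bf X}^{\bullet}$
\begin{displaymath}
{\bf X}^{n}\to M^{n}{\bf X}
\end{displaymath}
is identified with ${\bf Cart}_{E}((E_{/\mathscr{S}})_{n},F)\to {\bf Cart}_{E}(L_{n}(E_{/\mathscr{S}}),F)$, hence is an isofibration; this is precisely Reedy fibrancy in the model category $CAT$.

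The main technical obstacle is the bookkeeping for the latching objects and verifying that the matching map of ${\bf X}^{\bullet}$ really coincides with the image under ${\bf Cart}_{E}(-,F)$ of the latching map of $E_{/\mathscr{S}}$. This requires that the relevant colimits used to form $L_{n}(E_{/\mathscr{S}})$ (a finite colimit of a diagram of inclusions of sub-coproducts of objects of the form $E_{/T}$) exist in {\rm Fib}$(E)$, which is straightforward since the diagram reduces to unions of sub-coproducts and the coproducts themselves are computed as in $CAT_{/E}$ and remain fibrations. Everything else is formal: Reedy fibrancy gives ${\rm Tot}{\bf X}\simeq {\rm holim}{\bf X}$, and the equivalence of the two conditions follows by comparing with Proposition 55.
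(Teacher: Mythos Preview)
Your overall plan coincides with the paper's: reduce to Proposition~55, show that the cosimplicial category ${\bf Cart}_{E}(E_{/\mathscr{S}},F)$ is Reedy fibrant (this is exactly the paper's Lemma~57), and deduce Reedy fibrancy from Reedy cofibrancy of the simplicial object $E_{/\mathscr{S}}$ via Proposition~21.

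There is, however, an error in your computation of the latching objects. It is \emph{not} true that $L_{n}(E_{/\mathscr{S}})$ is the sub-coproduct indexed by tuples with some $i_{j}=i_{j+1}$. Already for $n=1$ one has $L_{1}(E_{/\mathscr{S}})=(E_{/\mathscr{S}})_{0}=\coprod_{i}E_{/S_{i}}$, and the latching map carries the summand $E_{/S_{i}}$ into $E_{/S_{i,i}}=E_{/S_{i}\times_{S}S_{i}}$ via the diagonal $S_{i}\to S_{i}\times_{S}S_{i}$; this is an isomorphism onto the whole summand only when $S_{i}\to S$ is a monomorphism. In general the latching map lands inside the sub-coproduct of degenerate indices but does not surject onto it, so the description ``inclusion of a sub-coproduct'' fails, and with it your argument for the existence of $L_{n}$ in {\rm Fib}$(E)$ as a ``union of sub-coproducts''. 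The conclusion you actually need---that each latching map is injective on objects---is still true (diagonals are split monomorphisms, and one inducts), but both the existence of $L_{n}$ in {\rm Fib}$(E)$ and the verification that the latching maps are cofibrations require the inductive pushout analysis of Section~7.3 (the paper's Lemma~58) rather than the shortcut you propose. With that correction your argument is the paper's.
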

\begin{proof}
This follows from Proposition 55
and Lemma 57.
\end{proof}
\begin{lem}
For every $S\in Ob(E)$, every
covering family $\mathscr{S}=(S_{i}\to  S)_{i\in I}$ 
and every object $F$ of {\rm Fib}$(E)$,
the cosimplicial object in $CAT$
${\bf Cart}_{E}(E_{/\mathscr{S}},F)$
is Reedy fibrant.
\end{lem}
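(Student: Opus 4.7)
The plan is to identify, at each cosimplicial level $n$, the matching object of ${\bf Cart}_{E}(E_{/\mathscr{S}},F)$ with ${\bf Cart}_{E}(L_{n}E_{/\mathscr{S}},F)$, where $L_{n}E_{/\mathscr{S}}$ denotes the $n$-th latching object of the simplicial object $E_{/\mathscr{S}}$ in {\rm Fib}$(E)$. The latching object is a colimit over the codegeneracy part, and since ${\bf Cart}_{E}(-,F)$ is contravariant and carries the colimits that define latching objects to the limits that define matching objects, this identification is automatic; under it, the $n$-th matching map is the restriction functor induced by the latching map $L_{n}E_{/\mathscr{S}}\to (E_{/\mathscr{S}})_{n}$.

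To conclude Reedy fibrancy, it suffices to show that each such matching map is an isofibration in $CAT$. I would deduce this from Proposition 21 applied with the cofibration $u$ being the latching map and the isofibration $v$ being the structure map $F\to E$ (a fibration, hence an isofibration by Lemma 1(2)); since $E$ is terminal in {\rm Fib}$(E)$, the categories ${\bf Cart}_{E}(-,E)$ are terminal and the pullback in Proposition 21 degenerates, yielding that the restriction ${\bf Cart}_{E}(G,F)\to {\bf Cart}_{E}(F',F)$ is an isofibration for every cofibration $F'\to G$. The task thus reduces to showing that each latching map $L_{n}E_{/\mathscr{S}}\to (E_{/\mathscr{S}})_{n}$ is injective on objects.

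For this, I would write $E_{/\mathscr{S}}$ as $\Phi D C(\mathscr{S})$, where $C(\mathscr{S})$ is the \v{C}ech nerve of the covering family in $\widehat{E}$ (with $C(\mathscr{S})_{n}=\coprod_{i_{0},\dots,i_{n}}\eta(S_{i_{0},\dots,i_{n}})$) and $D\colon\widehat{E}\to [E^{op},CAT]$ is the pointwise discrete category functor. The functor $\Phi$ is a left adjoint (Section 2.3), and $D$ preserves colimits and sends monomorphisms of presheaves to objectwise injective-on-objects natural transformations. Consequently $\Phi D$ preserves latching objects and carries monomorphisms in $\widehat{E}$ to maps in {\rm Fib}$(E)$ that are injective on objects. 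It therefore suffices to show that each latching map of the simplicial presheaf $C(\mathscr{S})$ is a monomorphism in $\widehat{E}$.

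This last point is the standard fact that in any simplicial object in $SET$ (and hence, pointwise, in any simplicial presheaf of sets) the latching map is a monomorphism, since every codegeneracy $s_{i}$ is a split mono via $d_{i}s_{i}=\mathrm{id}$ and the latching object is realized as the union of the images of such split monos. The main conceptual step is the reduction via Proposition 21, together with the transport of latching objects and monomorphisms through $\Phi D$; once these are in hand, no substantive obstacle remains, and the rest is routine simplicial bookkeeping.
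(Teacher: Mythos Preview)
Your reduction is exactly the paper's: by Proposition~21 the functor ${\bf Cart}_{E}(-,F)\colon {\rm Fib}(E)^{op}\to CAT$ is right Quillen (the paper phrases this as an adjoint pair with $F^{(-)}$), so it suffices that $E_{/\mathscr{S}}$ be Reedy cofibrant, i.e.\ that each latching map $L_{n}E_{/\mathscr{S}}\to (E_{/\mathscr{S}})_{n}$ exist and be injective on objects. Where you diverge is in the verification of Reedy cofibrancy. The paper works internally in {\rm Fib}$(E)$ and appeals to Lemma~58, the inductive pushout description of latching objects, to show simultaneously that each $L_{n}E_{/\mathscr{S}}$ exists (not automatic in a generalized model category) and that the latching map is a cofibration. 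You instead transport from $\widehat{E}$ via $\Phi D$: since $\Phi$ is left adjoint to $\mathcal{S}$ and $D$ is left adjoint to $Ob$, the composite preserves the latching colimits and sends monomorphisms of presheaves to maps injective on objects, so it is enough that the \v{C}ech nerve $C(\mathscr{S})$ have monomorphic latching maps. This is a clean alternative and has the advantage of sidestepping the existence issue in {\rm Fib}$(E)$ entirely.

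One point deserves more care. Your justification that latching maps of simplicial sets are always monomorphisms --- ``every codegeneracy $s_{i}$ is a split mono \ldots\ and the latching object is realized as the union of the images'' --- is not a proof. That each $s_{i}$ is a split monomorphism does not by itself force the colimit $L_{n}X$ to embed in $X_{n}$; one must know that the identifications in the colimit match those in $X_{n}$. The correct argument is the Eilenberg--Zilber lemma: every simplex of a simplicial set is uniquely of the form $\sigma^{\ast}(y)$ with $\sigma$ a surjection and $y$ non-degenerate, whence $L_{n}X$ is identified with the subset of degenerate $n$-simplices. Once you cite this, your argument is complete.
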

\begin{proof}
By Proposition 21 the adjoint pair 
\begin{displaymath}
F^{(-)}\colon CAT\rightleftarrows 
{\rm Fib}(E)^{op}\colon {\bf Cart}_{E}(-,F)
\end{displaymath}
is a Quillen pair. Therefore, to prove
the Lemma it suffices to show that 
$E_{/\mathscr{S}}$ is Reedy cofibrant,
by which we mean that for every 
$[n]\in Ob(\Delta)$ the latching object
of $E_{/\mathscr{S}}$ at $[n]$,
denoted by $L_{n}E_{/\mathscr{S}}$, 
exists and the natural map 
$L_{n}E_{/\mathscr{S}}\to 
(E_{/\mathscr{S}})_{n}$
is injective on objects.
A way to prove this 
is by using Lemma 58.
\end{proof}

\subsection{Latching objects 
of simplicial objects}

In general, the following considerations 
may help deciding whether a 
simplicial object in a generalized
model category is Reedy cofibrant.

Let $\mathcal{M}$ be a category 
and {\bf X} a simplicial object in 
$\mathcal{M}$. We recall that the 
latching object of {\bf X} at 
$[n]\in Ob(\Delta)$ is
\begin{displaymath}
L_{n}{\bf X}=\underset{\partial([n]\downarrow 
\overleftarrow{\Delta})^{op}}{\rm colim}{\bf X}
\end{displaymath}
provided that the colimit exists.
Here $\overleftarrow{\Delta}$
is the subcategory of $\Delta$
consisting of the surjective maps
and $\partial([n]\downarrow 
\overleftarrow{\Delta})$ is the
full subcategory of $([n]\downarrow 
\overleftarrow{\Delta})$ containing 
all the objects except the identity 
map of $[n]$. Below we shall review 
the construction of $L_{n}{\bf X}$.

The category $([n]\downarrow 
\overleftarrow{\Delta})$ has the 
following description \cite[VII 1]{GJ}.
The identity map of $[n]$ is its initial
object. Any other object is of the form
$s^{i_{1}}...s^{i_{k}}\colon [n]\to [n-k]$,
where $s^{j}$ denotes a codegeneracy 
operator, $1\leq k\leq n$ and 
$0\leq i_{1}\leq...\leq i_{k}\leq n-1$.

For $n\geq 0$ we let $\underline{n}$ be 
the set $\{1,2,...,n\}$, with the convention 
that $\underline{0}$ is the empty set.
We denote by $\mathcal{P}(\underline{n})$ 
the power set of $\underline{n}$. 
$\mathcal{P}(\underline{n})$ is a 
partially ordered set. We set
$\mathcal{P}_{0}(\underline{n})=
\mathcal{P}(\underline{n})
\setminus \{\emptyset\}$ and
$\mathcal{P}_{1}(\underline{n})=
\mathcal{P}(\underline{n})
\setminus \{\underline{n}\}$.
There is an isomorphism
\begin{displaymath}
([n]\downarrow 
\overleftarrow{\Delta})\cong
\mathcal{P}(\underline{n})
\end{displaymath}
which sends the  
identity map of $[n]$ to 
$\emptyset$ and the object
$s^{i_{1}}...s^{i_{k}}\colon 
[n]\to [n-k]$ as above to 
$\{i_{1}+1,...,i_{k}+1\}$.
Under this isomorphism
the category $\partial([n]
\downarrow \overleftarrow{\Delta})$
corresponds to 
$\mathcal{P}_{0}(\underline{n})$,
therefore $\partial([n]\downarrow 
\overleftarrow{\Delta})^{op}$ is isomorphic 
to $\mathcal{P}_{1}(\underline{n})$.
The displayed isomorphism is natural
in the following sense. Let $Dec^{1}\colon
\Delta \to\Delta$ be $Dec^{1}([n])=
[n]\sqcup [0]\cong [n+1]$.
Then we have a commutative diagram
\begin{displaymath}
\xymatrix{
{([n]\downarrow \overleftarrow{\Delta})}
\ar[r]^{\cong}\ar[d]_{Dec^{1}}
&{\mathcal{P}(\underline{n})}\ar[d]\\
{([n+1]\downarrow \overleftarrow{\Delta})}
\ar[r]^{\cong}&{\mathcal{P}(\underline{n+1})}
}
\end{displaymath}
in which the unlabelled vertical arrow 
is the inclusion. Restricting the arrow
$Dec^{1}$ to $\partial$ and then 
taking the opposite category we
obtain a commutative diagram
in which the unlabelled vertical
arrow becomes $-\cup \{n+1\}\colon
\mathcal{P}_{1}(\underline{n})
\to \mathcal{P}_{1}(\underline{n+1})$.

For $n\geq 1$ the category
$\mathcal{P}_{1}(\underline{n})$
is constructed inductively as
the Grothendieck
construction applied to the 
functor $(2\leftarrow 1\to 0)\to CAT$
given by $\ast\leftarrow 
\mathcal{P}_{1}(\underline{n-1})=
\mathcal{P}_{1}(\underline{n-1})$.
Therefore colimits indexed by 
$\mathcal{P}_{1}(\underline{n})$
have the following description.
Let ${\bf Y}\colon 
\mathcal{P}_{1}(\underline{n})
\to \mathcal{M}$. We denote by 
{\bf Y} the precomposition of 
{\bf Y} with the inclusion
$\mathcal{P}_{1}(\underline{n-1})
\subset \mathcal{P}_{1}(\underline{n})$;
then $\underset{\mathcal{P}_{1}(\underline{n})}
{\rm colim}{\bf Y}$ is the pushout of the diagram
\begin{displaymath}
\xymatrix{
{\underset{\mathcal{P}_{1}
(\underline{n-1})}{\rm colim}{\bf Y}}
\ar[r]\ar[d] &{\underset{\mathcal{P}_{1}
(\underline{n-1})}{\rm colim}{\bf Y}(-\cup \{n\})}\\
{{\bf Y}_{\underline{n-1}}}
}
\end{displaymath}
provided that the pushout and all
the involved colimits exist. 

Let $\overleftarrow{{\bf X}}$ 
be the restriction of {\bf X} to 
$(\overleftarrow{\Delta})^{op}$.
Notice that the definition of the 
latching object of {\bf X} uses 
only $\overleftarrow{{\bf X}}$.
Summing up, $L_{n}{\bf X}$ is 
the pushout of the diagram
\begin{displaymath}
\xymatrix{
{L_{n-1}\overleftarrow{{\bf X}}}
\ar[r]\ar[d] &{L_{n-1}Dec^{1}
(\overleftarrow{{\bf X}})}\\
{{\bf X}_{n-1}}
}
\end{displaymath}
provided that the pushout and all
the involved colimits exist, where
$\overleftarrow{{\bf X}}\to
Dec^{1}(\overleftarrow{{\bf X}})$ 
is induced by $s^{n}\colon 
[n]\sqcup [0]\to [n]$. Thus, we have
\begin{lem}
Let $\mathcal{M}$ be a generalized 
model category and {\bf X} a 
simplicial object in $\mathcal{M}$.
Let $n\geq 1$. If $L_{n-1}\overleftarrow{{\bf X}}$
and $L_{n-1}Dec^{1}(\overleftarrow{{\bf X}})$
exist and the map $L_{n-1}\overleftarrow{{\bf X}}
\to {\bf X}_{n-1}$ is a cofibration, then
$L_{n}{\bf X}$ exists.
\end{lem}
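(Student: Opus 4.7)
The plan is to apply axiom A2 of Definition 8 directly to the explicit inductive description of $L_{n}{\bf X}$ that was worked out in the paragraphs preceding the lemma. Recall from that discussion that the identification $([n]\downarrow \overleftarrow{\Delta})\cong \mathcal{P}(\underline{n})$ and the inductive realization of $\mathcal{P}_{1}(\underline{n})$ as a Grothendieck construction over $(2\leftarrow 1\to 0)$ exhibit $L_{n}{\bf X}$ as the pushout of the span
\[
{\bf X}_{n-1}\;\longleftarrow\; L_{n-1}\overleftarrow{{\bf X}}\;\longrightarrow\; L_{n-1}Dec^{1}(\overleftarrow{{\bf X}}),
\]
where the left leg is induced by the structure maps of ${\bf X}$ (the copy of the indexing category containing the top element $\underline{n-1}$) and the right leg is induced by the inclusion $\mathcal{P}_{1}(\underline{n-1})\hookrightarrow\mathcal{P}_{1}(\underline{n})$ obtained from $-\cup\{n\}$, equivalently from $s^{n}\colon [n]\sqcup [0]\to [n]$.

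By hypothesis, the two vertices $L_{n-1}\overleftarrow{{\bf X}}$ and $L_{n-1}Dec^{1}(\overleftarrow{{\bf X}})$ exist in $\mathcal{M}$, so the span above is well-defined. Again by hypothesis, the left leg $L_{n-1}\overleftarrow{{\bf X}}\to {\bf X}_{n-1}$ is a cofibration. Axiom A2 for a generalized model category then guarantees that the pushout of this span exists in $\mathcal{M}$, and that pushout is by construction $L_{n}{\bf X}$.

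There is essentially no obstacle to overcome: the combinatorial identification of the indexing category for $L_{n}{\bf X}$ and its inductive decomposition as a pushout were already established before the statement of the lemma, so the sole model-categorical ingredient needed is the pushout axiom A2, whose hypothesis is precisely the cofibrancy assumption on $L_{n-1}\overleftarrow{{\bf X}}\to {\bf X}_{n-1}$.
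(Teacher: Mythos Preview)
Your proposal is correct and follows exactly the approach of the paper: the lemma is stated there as an immediate consequence (``Thus, we have'') of the preceding inductive description of $L_{n}{\bf X}$ as the pushout of the span ${\bf X}_{n-1}\leftarrow L_{n-1}\overleftarrow{{\bf X}}\to L_{n-1}Dec^{1}(\overleftarrow{{\bf X}})$, together with axiom A2 guaranteeing that pushouts along cofibrations exist.
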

\section{Appendix 2: Left Bousfield localizations
and change of cofibrations}
In this section we essentially propose 
an approach to the existence
of left Bousfield localizations of 
`injective'-like model categories.
The approach is based on the existence 
of both the un-localized `injective'-like 
model category and the left Bousfield 
localization of the `projective'-like 
model category. We give a full description
of the fibrations of these localized 
`injective'-like model categories;
depending on one's  taste, the description
may or may not be satisfactory.
The approach uses only simple 
factorization and lifting arguments.

Let $\mathcal{M}_{1}=({\rm W},
{\rm C}_{1},{\rm F}_{1})$
and $\mathcal{M}_{2}=({\rm W},
{\rm C}_{2},{\rm F}_{2})$ 
be two model categories
on a category $\mathcal{M}$, 
where, as usual, {\rm W} stands
for the class of weak equivalences,
{\rm C} stands for the class of cofibrations, 
and {\rm F} for the class of fibrations.
We assume that ${\rm C}_{1}\subset {\rm C}_{2}$.
Let ${\rm W}'$ be a class of maps of 
$\mathcal{M}$ that contains {\rm W} 
and has the two out of three property. 
We define ${\rm F}_{1}'$
to be the class of maps having the 
right lifting property with respect
to every map of ${\rm C}_{1}\cap {\rm W}'$,
and we define ${\rm F}_{2}'$
to be the class of maps having the 
right lifting property with respect
to every map of 
${\rm C}_{2}\cap {\rm W}'$.
One can think of $\mathcal{M}_{1}$ 
as the `projective' model category, of 
$\mathcal{M}_{2}$ as the `injective' 
model category, and of ${\rm W}'$ as  
the class of `local', or `stable, equivalences'. 
Of course, other adjectives can be used.
Recall from Section 3.3 the notion of 
left Bousfield localization of a 
(generalized) model category.
\begin{thm}
(1) {\rm (Restriction)} If ${\rm L}\mathcal{M}_{2}=
({\rm W}',{\rm C}_{2},{\rm F}_{2}')$ 
is a left Bousfield localization of 
$\mathcal{M}_{2}$, then 
the class of fibrations of ${\rm L}\mathcal{M}_{2}$
is the class ${\rm F}_{2}\cap {\rm F}_{1}'$
and ${\rm L}\mathcal{M}_{1}=
({\rm W}',{\rm C}_{1},{\rm F}_{1}')$
is a left Bousfield localization of 
$\mathcal{M}_{1}$.

(2) {\rm (Extension)} If ${\rm L}\mathcal{M}_{1}
=({\rm W}',{\rm C}_{1},{\rm F}_{1}')$ 
is a left Bousfield localization 
of $\mathcal{M}_{1}$ that is right proper, 
then ${\rm L}\mathcal{M}_{2}=
({\rm W}',{\rm C}_{2},{\rm F}_{2}')$
is a left Bousfield localization of 
$\mathcal{M}_{2}$.
\end{thm}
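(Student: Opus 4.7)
The plan is to apply Lemma 9 in both directions by constructing the factorization $({\rm C}_i\cap {\rm W}')\circ {\rm F}_i'$ from factorizations already at hand in $\mathcal{M}_1$, $\mathcal{M}_2$, and the given Bousfield localization. Three observations will be used throughout: ${\rm C}_1\subseteq {\rm C}_2$ forces ${\rm F}_2\subseteq {\rm F}_1$ and ${\rm F}_2'\subseteq {\rm F}_1'$; the identity ${\rm F}_i\cap {\rm W}={\rm C}_i^{\perp}$ together with ${\rm W}\subseteq {\rm W}'$ gives ${\rm F}_i\cap {\rm W}\subseteq {\rm F}_i'$; and ${\rm W}'$ is closed under retracts and has two-out-of-three, so ${\rm C}_i\cap {\rm W}'$ is closed under codomain retracts. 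What remains for Lemma 9 in each case is the factorization axiom.

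For the \emph{Restriction} part (1), to produce ${\rm L}\mathcal{M}_1$ I factor a given $f$ in ${\rm L}\mathcal{M}_2$ as $f=q'\circ i'$ with $i'\in {\rm C}_2\cap {\rm W}'$ and $q'\in {\rm F}_2'$, and then refine $i'$ in $\mathcal{M}_1$ as $i'=g_1\circ k_1$ with $k_1\in {\rm C}_1$ and $g_1\in {\rm F}_1\cap {\rm W}$. Two-out-of-three for ${\rm W}'$ puts $k_1$ in ${\rm C}_1\cap {\rm W}'$, while $q'g_1$ lies in ${\rm F}_1'$ since both factors do and ${\rm F}_1'$ is closed under composition. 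This is the factorization required by Lemma 9.

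The main technical step is the description ${\rm F}_2'={\rm F}_2\cap {\rm F}_1'$. The inclusion $\subseteq$ is formal. For the reverse, given $p\in {\rm F}_2\cap {\rm F}_1'$ and $j\in {\rm C}_2\cap {\rm W}'$, I factor $j$ in $\mathcal{M}_1$ as $j=g\circ k_1$, $k_1\in {\rm C}_1$, $g\in {\rm F}_1\cap {\rm W}$ (so $k_1\in {\rm C}_1\cap {\rm W}'$), and then factor $g$ in $\mathcal{M}_2$ as $g=q_2\circ k_2$, $k_2\in {\rm C}_2$, $q_2\in {\rm F}_2\cap {\rm W}$ (so $k_2\in {\rm C}_2\cap {\rm W}$ by two-out-of-three for ${\rm W}$). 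The key observation is that $j$ is a retract, in the arrow category, of the composite $k_2\circ k_1$: the $\mathcal{M}_2$-lifting square with $j\in {\rm C}_2$ on the left, $q_2\in {\rm F}_2\cap {\rm W}$ on the right, top map $k_2k_1$ and bottom map the identity admits a diagonal filler $s$, and the data $(s,\mathrm{id})$ together with the obvious domain maps exhibits the retraction. Since $p\in {\rm F}_1'$ lifts against $k_1$ and $p\in {\rm F}_2$ lifts against $k_2$, it lifts against the composite $k_2k_1$, hence against its retract $j$; this shows $p\in {\rm F}_2'$.

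For the \emph{Extension} part (2), the strategy is once more to produce the required factorization. The natural candidate is to factor $f$ in ${\rm L}\mathcal{M}_1$ as $f=h'\circ i'$, $i'\in {\rm C}_1\cap {\rm W}'\subseteq {\rm C}_2\cap {\rm W}'$, $h'\in {\rm F}_1'$, and then factor $h'$ in $\mathcal{M}_2$ as $h'=\beta\circ\gamma$ with $\gamma\in {\rm C}_2\cap {\rm W}$ and $\beta\in {\rm F}_2$. The composite $\gamma\circ i'$ then lies in ${\rm C}_2\cap {\rm W}'$, so the factorization axiom for $\mathrm{L}\mathcal{M}_2$ reduces to the claim $\beta\in {\rm F}_2'$. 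Using the characterization of (1), which depends only on the hypotheses already in force, this reduces in turn to $\beta\in {\rm F}_1'$. This is the main obstacle, and I expect it to be where the right-properness of ${\rm L}\mathcal{M}_1$ enters: pulling back $h'\in {\rm F}_1'$ appropriately and invoking right-properness (pullback of $\gamma\in {\rm W}'$ along an ${\rm F}_1'$-fibration remains in ${\rm W}'$) should transfer the ${\rm F}_1'$-lifting property of $h'$ to $\beta$, after which (1) identifies $\beta$ with an element of ${\rm F}_2'$ and Lemma 9 yields ${\rm L}\mathcal{M}_2$.
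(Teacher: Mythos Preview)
Your argument for part~(1) is correct. The factorization producing ${\rm L}\mathcal{M}_1$ is the same Cole--Blander trick the paper uses. For the identity ${\rm F}_2'={\rm F}_2\cap{\rm F}_1'$ you take a different route: the paper factors the \emph{right} column of the lifting square through a map in ${\rm F}_2\cap{\rm W}$, building that map from $({\rm C}_2\cap{\rm W}',{\rm F}_2')$-factorizations supplied by ${\rm L}\mathcal{M}_2$; you instead exhibit the left map $j$ as a retract of a composite $k_2k_1$ with $k_1\in{\rm C}_1\cap{\rm W}'$ and $k_2\in{\rm C}_2\cap{\rm W}$, using only factorizations in $\mathcal{M}_1$ and $\mathcal{M}_2$. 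This buys you something real: your proof of the characterization does not assume ${\rm L}\mathcal{M}_2$ exists, so it is legitimately available under the hypotheses of part~(2), exactly as you observe.

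Part~(2), however, has a genuine gap at the step you yourself flag. After your reduction you need: if $h'=\beta\gamma$ with $h'\in{\rm F}_1'$, $\gamma\in{\rm C}_2\cap{\rm W}$ and $\beta\in{\rm F}_2$, then $\beta\in{\rm F}_1'$. Your gesture at right-properness does not lead anywhere concrete: $\gamma$ and $h'$ share a domain rather than a codomain, so there is no pullback square in which they sit as parallel edges, and $\beta$ is not a base change of $h'$ along anything. The real obstruction is that the codomain $Y$ of $\beta$ is the codomain of the original map $f$ and need not be ${\rm L}\mathcal{M}_1$-fibrant. The paper's argument (following Bousfield) confronts this directly. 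It first proves, as a separate step not using right-properness, that any ${\rm F}_2$-map between ${\rm L}\mathcal{M}_1$-fibrant objects lies in ${\rm F}_2'$: one factors both legs of the lifting square through ${\rm L}\mathcal{M}_1$-fibrant replacements and uses that a ${\rm W}'$-map between ${\rm L}\mathcal{M}_1$-fibrant objects lies in ${\rm W}$. Only then does it replace $f$ by an ${\rm F}_2$-map $g\colon X''\to Y''$ between ${\rm L}\mathcal{M}_1$-fibrant objects via fibrant replacements, and right-properness enters precisely to pull $g$ back along the ${\rm W}'$-map $Y\to Y''$ and land over $Y$. The missing idea in your sketch is this preliminary passage to ${\rm L}\mathcal{M}_1$-fibrant source and target before taking the $\mathcal{M}_2$-factorization.
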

For future purposes we display
the conclusion of Theorem 59(2) 
in the diagram
\begin{displaymath}
\xymatrix@=2ex{
&{\mathcal{M}_{2}}\ar@{-}[dr]\\
{{\rm L}\mathcal{M}_{2}}\ar@{.}[ur]\ar@{.}[dr]
&&{\mathcal{M}_{1}}\\
&{{\rm L}\mathcal{M}_{1}}\ar@{-}[ur]
}
\end{displaymath}
The proofs of the existence of the left 
Bousfield localizations in parts (1) and (2) 
are different from one another.
As it will be explained below, the existence 
of the left Bousfield localization in part (1) is 
actually well-known, but perhaps it has not 
been formulated in this form. Also, the right
properness assumption in part (2) is
dictated by the method of proof.
\begin{proof}[Proof of Theorem 59]
We prove part (1). We first show that 
${\rm F}_{2}'={\rm F}_{2}\cap {\rm F}_{1}'$.
Clearly, we have ${\rm F}_{2}'\subset 
{\rm F}_{2}\cap {\rm F}_{1}'$.
Conversely, we must prove that 
every commutative 
diagram in $\mathcal{M}$
\begin{displaymath}
\xymatrix{
{A}\ar[r] 
\ar[d]_{j}&{X}\ar[d]^{p}\\
{B} \ar[r]
&{Y}
}
\end{displaymath}
where $j$ is in ${\rm C}_{2}\cap 
{\rm W}'$ and $p$ is in 
${\rm F}_{2}\cap {\rm F}_{1}'$,
has a diagonal filler. The idea, 
which we shall use again, is 
very roughly that a commutative 
diagram 
\begin{displaymath}
\xymatrix{
{\bullet}\ar[r] 
\ar[d]&{\bullet}\ar[d]\\
{\bullet} \ar[r]
&{\bullet}
}
\end{displaymath}
in an arbitrary category
has a diagonal filler when,
for example, viewed as 
an arrow going from left to 
right in the category of arrows,
it factors through an isomorphism.

We first construct a 
commutative diagram
\begin{displaymath}
\xymatrix{
{A} \ar[r]  \ar[d]_{j} & {X'} 
\ar[r] \ar[d]^{q} & {X} \ar[d]^{p}\\
{B} \ar[r] & {Y'} \ar[r] & {Y}
}
\end{displaymath}
with $q$ in ${\rm F}_{2}\cap {\rm W}$.
Then, since $j$ is in ${\rm C}_{2}$,
the left commutative square diagram
has a diagonal filler.

We factorize the map $B\to Y$ 
into a map $B\to Y'$ in 
${\rm C}_{2}\cap {\rm W}'$
followed by a map $Y'\to Y$ in
${\rm F}_{2}'$. We factorize
the canonical map $A\to Y'\times_{Y}X$ 
into a map $A\to X'$ in 
${\rm C}_{2}\cap {\rm W}'$
followed by a map $X'\to Y'\times_{Y}X$ 
in ${\rm F}_{2}'$. Let $q$ be the 
composite map $X'\to Y'$; then $q$
is in ${\rm F}_{2}$ being the 
composite of two maps
in ${\rm F}_{2}$.
On the other hand, $q$ is in 
${\rm F}_{1}'$ since 
${\rm F}_{2}'\subset {\rm F}_{1}'$
and since ${\rm F}_{1}'$ is stable 
under pullbacks and compositions.
By the two out of three 
property  $q$ is in ${\rm W}'$,
therefore $q$ belongs to 
${\rm F}_{1}'\cap {\rm W}'=
{\rm F}_{1}\cap {\rm W}$.
In all, $q$ is in ${\rm F}_{2}\cap {\rm W}$.

We now prove the existence of  
${\rm L}\mathcal{M}_{1}$.
This can be seen as a consequence of a 
result of M. Cole \cite[Theorem 2.1]{Co} (or of
B.A. Blander \cite[Proof of Theorem 1.5]{Bl}).
In our context however, since we have 
Lemma 9 we only need to check the 
factorization of an arbitrary map of 
$\mathcal{M}$ into a map
in ${\rm C}_{1}\cap {\rm W}'$ 
followed by a map in ${\rm F}_{1}'$.
This proceeds as in \cite{Co, Bl}; 
for completeness we reproduce the 
argument. 

Let $f\colon X\to Y$ be a map of $\mathcal{M}$.
We factorize it as a map $X\to Z$ in 
${\rm C}_{2}\cap {\rm W}'$ followed
by a map $Z\to Y$ in ${\rm F}_{2}'$.
We further factorize $X\to Z$ into a
map $X\to Z'$ in ${\rm C}_{1}$ followed
by a map $Z'\to Z$ in ${\rm F}_{1}\cap {\rm W}$.
The desired factorization of $f$ is $X\to Z'$
followed by the composite $Z'\to Y$.

We prove part (2). By Lemma 9, 
it only remains to check the 
factorization of an arbitrary map
of $\mathcal{M}$ into a map
in ${\rm C}_{2}\cap {\rm W}'$ 
followed by a map in ${\rm F}_{2}'$.
Mimicking the argument given in 
part (1) for the existence of 
${\rm L}\mathcal{M}_{1}$
does not seem to give a solution.
We shall instead expand on 
an argument due to A.K. Bousfield 
\cite[Proof of Theorem 9.3]{Bou},
that's why we assumed right properness
of ${\rm L}\mathcal{M}_{1}$.

\emph{Step 1.} We give an example 
of a map in ${\rm F}_{2}'$.
We claim that every commutative 
diagram in $\mathcal{M}$
\begin{displaymath}
\xymatrix{
{A}\ar[r] 
\ar[d]_{j}&{X}\ar[d]^{p}\\
{B} \ar[r]
&{Y}
}
\end{displaymath}
where $j$ is in ${\rm C}_{2}\cap 
{\rm W}'$, $p$ is in ${\rm F}_{2}$,
and $X$ and $Y$ are fibrant in 
${\rm L}\mathcal{M}_{1}$,
has a diagonal filler. For this 
we shall construct a 
commutative diagram
\begin{displaymath}
\xymatrix{
{A} \ar[r]  \ar[d]_{j} & {X'} 
\ar[r] \ar[d]^{q} & {X} \ar[d]^{p}\\
{B} \ar[r] & {Y'} \ar[r] & {Y}
}
\end{displaymath}
with $q$ in {\rm W}. Factorizing
then $q$ as a map in ${\rm C}_{2}$
followed by a map in 
${\rm F}_{2}\cap {\rm W}$ and
using two diagonal fillers, we obtain
the desired diagonal filler.
We factorize the map $B\to Y$ 
into a map $B\to Y'$ in 
${\rm C}_{1}\cap {\rm W}'$
followed by a map $Y'\to Y$ in
${\rm F}_{1}'$. We factorize
the canonical map $A\to Y'\times_{Y}X$ 
into a map $A\to X'$ in 
${\rm C}_{1}\cap {\rm W}'$
followed by a map $X'\to Y'\times_{Y}X$ 
in ${\rm F}_{1}'$. Let $q$ be the 
composite map $X'\to Y'$. By the two
out of three property $q$ is in 
${\rm W}'$. Since $Y$ is fibrant
in ${\rm L}\mathcal{M}_{1}$,
so is $Y'$. The map $Y'\times_{Y}X\to X$
is in ${\rm F}_{1}'$ and $X$
is fibrant in ${\rm L}\mathcal{M}_{1}$,
therefore  $Y'\times_{Y}X$, and hence
$X'$, are fibrant in 
${\rm L}\mathcal{M}_{1}$.
It follows that the map $q$ is in 
{\rm W}. The claim is proved.

\emph{Step 2.} Let $f\colon X\to Y$ 
be a map of $\mathcal{M}$. 
We can find a commutative diagram
\begin{displaymath}
\xymatrix{
{X}\ar[r] 
\ar[d]_{f}&{X'}\ar[d]^{f'}\\
{Y} \ar[r]
&{Y'}
}
\end{displaymath}
in which the two horizontal
arrows are in ${\rm W}'$
and both $X'$ and $Y'$ are
fibrant in ${\rm L}\mathcal{M}_{1}$.
We can find a commutative diagram
\begin{displaymath}
\xymatrix{
{X'}\ar[r] 
\ar[d]_{f'}&{X''}\ar[d]^{g}\\
{Y'} \ar[r]
&{Y''}
}
\end{displaymath}
in which the two horizontal
arrows are in {\rm W}, $g$
is in ${\rm F}_{2}$,
and both $X''$ and $Y''$ are
fibrant in $\mathcal{M}_{1}$.
It follows that both $X''$ and $Y''$ 
are fibrant in ${\rm L}\mathcal{M}_{1}$.
The map $g$ is a fibration in 
${\rm L}\mathcal{M}_{1}$,
since ${\rm F}_{2}\subset {\rm F}_{1}$.
Putting the two previous 
commutative diagrams side 
by side we obtain a commutative
diagram
\begin{displaymath}
\xymatrix{
{X}\ar[r] 
\ar[d]_{f}&{X''}\ar[d]^{g}\\
{Y} \ar[r]
&{Y''}
}
\end{displaymath}
in which the two horizontal
arrows are in ${\rm W}'$.
Since ${\rm L}\mathcal{M}_{1}$
is right proper, the map
$Y\times_{Y''}X''\to X''$ is
in ${\rm W}'$, therefore
the canonical map 
$X\to Y\times_{Y''}X''$
is in ${\rm W}'$. By the claim,
the map $Y\times_{Y''}X''\to Y$ 
is in ${\rm F}_{2}'$. We factorize
the map $X\to Y\times_{Y''}X''$
into a map $X\to Z$ that is in 
${\rm C}_{2}$ followed by a 
map $Z\to Y\times_{Y''}X''$
that is in ${\rm F}_{2}\cap {\rm W}$.
Since ${\rm F}_{2}\cap {\rm W}\subset
{\rm F}_{2}'$, we obtain the desired
factorization of $f$ into a map
in ${\rm C}_{2}\cap {\rm W}'$ 
followed by a map in ${\rm F}_{2}'$.
The proof of the existence of 
${\rm L}\mathcal{M}_{2}$ 
is complete.
\end{proof}

Some results in the subject of
`homotopical sheaf theory' can be 
seen as consequences of Theorem 59. 
Here are a couple of examples.

Let $\mathcal{C}$ be a small category. 
The category of presheaves on 
$\mathcal{C}$ with values in simplicial 
sets is a model category in two standard 
ways: it has the so-called projective 
and injective model structures. 
The class of cofibrations of the 
projective model category is 
contained in the class of cofibrations 
of the injective model category. If 
$\mathcal{C}$ is moreover a site, a 
result of Dugger-Hollander-Isaksen 
\cite[Theorem 6.2]{DHI} says that 
the projective model category
admits a left Bousfield localization
$U\mathcal{C}_{\mathcal{L}}$
at the class $\mathcal{L}$ of local
weak equivalences. The fibrations
of $U\mathcal{C}_{\mathcal{L}}$
are the objectwise fibrations that
satisfy descent for hypercovers
\cite[Theorem 7.4]{DHI}. The 
model category 
$U\mathcal{C}_{\mathcal{L}}$
is right proper (for an interesting
proof, see \cite[Proposition 7.1]{DI}).
Therefore, by Theorem 59(2), Jardine's
model category, denoted by 
$sPre(\mathcal{C})_{\mathcal{L}}$
in \cite{DHI}, exists. Moreover, 
by Theorem 59(1) its fibrations are 
the injective fibrations that satisfy 
descent for hypercovers: this is 
exactly the content of the first 
part of \cite[Theorem 7.4]{DHI}.
As suggested in \cite{DI}, this approach 
to $sPre(\mathcal{C})_{\mathcal{L}}$
reduces the occurence
of stalks and Boolean localization
technique. The category of presheaves on 
$\mathcal{C}$ with values in simplicial 
sets also admits the so-called 
flasque model category \cite[Theorem 3.7(a)]{Is}.
The class of cofibrations of the projective 
model category is contained in the class of 
cofibrations of the flasque model
category \cite[Lemma 3.8]{Is}. Using 
$U\mathcal{C}_{\mathcal{L}}$ and 
Theorem 59 it follows that the local flasque 
model category \cite[Definition 4.1]{Is} 
exists.

Other examples can be found
on page 199 of \cite{HSS}: the existence
of both therein called the $S$ model and 
the injective stable model structures, 
together with the description of their 
fibrations, can be seen as consequences 
of Theorem 59.

\section{Appendix 3: Strong stacks of 
categories revisited}
Let $E$ be a small site. Recall
from Theorem 48 the model category
$Stack(\widetilde{E})_{proj}$.
\begin{lem}
The class of weak equivalences of
$Stack(\widetilde{E})_{proj}$ is 
accessible.
\end{lem}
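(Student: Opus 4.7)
The plan is to use the explicit description of the weak equivalences of {\rm Stack}$(\widetilde{E})_{proj}$ recalled in the text following Theorem 48: a morphism $f\colon X\to Y$ of {\rm Cat}$(\widetilde{E})$ is a weak equivalence if and only if $f$ is full and faithful as an internal functor and $\Phi f$ is `locally essentially surjective on objects'. Since ${\rm Cat}(\widetilde{E})$ is the category of models in the Grothendieck topos $\widetilde{E}$ of a finite-limit (essentially algebraic) theory, it is locally presentable, hence so is its arrow category. It therefore suffices to realize the class of weak equivalences as a finite intersection of accessibly embedded full subcategories of this arrow category; accessibility is then inherited by Ad\'amek--Rosick\'y's closure of accessibly embedded accessible subcategories under pseudo-pullbacks.

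For the fully faithful condition, the map $f$ is full and faithful as an internal functor if and only if the canonical comparison map $X_1\to Y_1\times_{Y_0\times Y_0}(X_0\times X_0)$ is an isomorphism in $\widetilde{E}$. The assignment sending $f$ to this comparison map is a finite-limit construction, hence an accessible functor from the arrow category of {\rm Cat}$(\widetilde{E})$ to the arrow category of $\widetilde{E}$, and the class of isomorphisms in $\widetilde{E}$ forms an accessible full subcategory of the arrow category, so the full faithfulness condition carves out an accessible full subcategory. For the local essential surjectivity condition, I would form the subobject $Y_1^{iso}\subset Y_1$ of internal isomorphisms (definable by finite limits in $\widetilde{E}$), take the pullback $Q(f)=X_0\times_{Y_0,s}Y_1^{iso}$ along the source map $s$, and consider the target map $t\colon Q(f)\to Y_0$; unwinding Giraud's definition, $\Phi f$ is locally essentially surjective on objects precisely when this is an epimorphism of sheaves. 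Again the assignment is a finite-limit construction and epimorphisms in $\widetilde{E}$ form an accessible class of arrows, yielding a second accessible full subcategory.

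The main obstacle I expect is the second step: one must verify both that the sheaf-theoretic reformulation of `locally essentially surjective on objects' in terms of the epimorphism condition on $Q(f)\to Y_0$ correctly translates Giraud's fibrewise refinement-theoretic definition through the functor $\Phi i$, and that the subobject of internal isomorphisms and the pullback defining $Q(f)$ can be assembled into a single accessible functor on the arrow category. The first step and the final intersection argument are essentially formal.
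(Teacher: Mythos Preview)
Your approach is correct but takes a genuinely different route from the paper's. The paper's proof is a one-liner using the fibrant replacement functor $\widehat{{\rm F}}=ia\mathcal{S}\Phi$ built in the proof of Theorem~48: since the unit maps $X\to\widehat{{\rm F}}X$ are weak equivalences and $\widehat{{\rm F}}X$ is fibrant, a map $f$ is a weak equivalence iff $\widehat{{\rm F}}f$ is, iff $i\widehat{{\rm F}}f$ is an objectwise equivalence in $[E^{op},CAT]_{proj}$; each of $i,a,\mathcal{S},\Phi$ is $\kappa$-accessible for some $\kappa$, and the class of objectwise equivalences is accessible, so one pulls back. Your argument instead stays internal to the topos $\widetilde{E}$, translating the two halves of ``bicovering'' into an isomorphism test and an epimorphism test on finite-limit constructions. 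The translation you flag as the main obstacle is indeed correct: for sheaves $X,Y$, the sieve on $S$ of arrows $T\to S$ over which a given $y\in Y_{0}(S)$ lifts up to isomorphism is exactly the sieve generated by the pullback of $t\colon Q(f)\to Y_{0}$ along $y$, so it covers iff $t$ is an epimorphism in $\widetilde{E}$; and $Y_{1}^{iso}$ is a finite limit, hence the whole assignment $f\mapsto (Q(f)\to Y_{0})$ is accessible. Your route is more explicit and avoids the stackification machinery; the paper's is shorter because it recycles the fibrant replacement already constructed.
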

\begin{proof}
Let $f\colon X\to Y$ be a map of
${\rm Cat}(\widetilde{E})$. Consider
the commutative square diagram
\begin{displaymath}
\xymatrix{
{X}\ar[r] \ar[d]_{f}
&{\widehat{F}X} 
\ar[d]^{\widehat{F}f}\\
{Y} \ar[r]
&{\widehat{F}Y}
}
\end{displaymath}
with $\widehat{F}X$ defined
in the proof of Theorem 48.
Then $f$ is a weak equivalence
if and only if $i\widehat{F}f$
is a weak equivalence in 
$[E^{op},CAT]_{proj}$.
The functors $i$ and $\widehat{F}$
preserve $\kappa$-filtered colimits 
for some regular cardinal $\kappa$.
Since the class of weak equivalences of 
$[E^{op},CAT]_{proj}$ is accessible,
the result follows.
\end{proof}
Recall from Theorem 50 the (right
proper) model category
$Stack(\widehat{E})_{proj}$.
By Theorem 59(2)
we have the model category
$Stack(\widehat{E})_{inj}$,
which we display in the diagram
\begin{displaymath}
\xymatrix@=2ex{
&{[E^{op},CAT]_{inj}}\ar@{-}[dr]\\
{Stack(\widehat{E})_{inj}}\ar@{.}[ur]\ar@{.}[dr]
&&{[E^{op},CAT]_{proj}}\\
&{Stack(\widehat{E})_{proj}}\ar@{-}[ur]
}
\end{displaymath}
By Theorem 59(1), an
object $X$ of $[E^{op},CAT]$
is fibrant in $Stack(\widehat{E})_{inj}$ 
if and only if $\Phi X$ is a 
stack and $X$ is fibrant in 
$[E^{op},CAT]_{inj}$.
\begin{thm}
\cite[Theorem 4]{JT}
There is a model category
{\rm Stack}$(\widetilde{E})_{inj}$
on the category {\rm Cat}$(\widetilde{E})$
in which the weak equivalences are the 
maps that $\Phi$ takes into
bicovering maps and the cofibrations
are the internal functors that are 
monomorphisms on objects.
A sheaf of categories $X$ is 
fibrant in {\rm Stack}$(\widetilde{E})_{inj}$
(aka $X$ is a strong stack) if
and only if $\Phi X$ is a 
stack and $X$ is fibrant in 
$[E^{op},CAT]_{inj}$.
\end{thm}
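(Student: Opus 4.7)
The plan is to deduce Theorem 61 from Theorem 59(2) (Extension), following the pattern used for {\rm Stack}$(\widehat{E})_{inj}$ in Theorem 60 and displayed in the diagram preceding this statement. The task therefore splits into producing two un-localized model structures on {\rm Cat}$(\widetilde{E})$ sharing a common class of weak equivalences and related by inclusion of cofibrations, together with identifying {\rm Stack}$(\widetilde{E})_{proj}$ as a left Bousfield localization of the smaller one.

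First, I would construct an ``un-localized injective'' model structure $\mathcal{M}_{2}$ on {\rm Cat}$(\widetilde{E})$ whose weak equivalences are the maps $f$ with $if$ an objectwise equivalence of categories and whose cofibrations are the internal functors that are monomorphisms on objects. The guide is the adjunction $a \colon [E^{op},CAT]_{inj} \rightleftarrows {\rm Cat}(\widetilde{E}) \colon i$: since $a$ preserves monomorphisms on objects (the fact invoked in the proof of Proposition 52) and $i$ reflects objectwise equivalences, the cell complexes required for a small object argument can be arranged to stay inside {\rm Cat}$(\widetilde{E})$, with the smallness supplied by the accessibility result of Lemma 60. Analogously, one identifies an un-localized projective structure $\mathcal{M}_{1}$ with a smaller class of cofibrations and the same weak equivalences, such that {\rm Stack}$(\widetilde{E})_{proj}$ is its left Bousfield localization at the class $W' = \{f : \Phi f \text{ is bicovering}\}$; the construction in Theorem 48 already factors naturally through such an $\mathcal{M}_{1}$. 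Theorem 59(2) applied to the pair $(\mathcal{M}_{1},\mathcal{M}_{2})$ and the class $W'$ then yields {\rm Stack}$(\widetilde{E})_{inj} = {\rm L}\mathcal{M}_{2}$, whose cofibrations coincide with those of $\mathcal{M}_{2}$, i.e., the monomorphisms on objects; the hypothesis needed by Theorem 59(2) is right properness of ${\rm L}\mathcal{M}_{1} = {\rm Stack}(\widetilde{E})_{proj}$, which is exactly the content of Theorem 48. The fibrant-object description is then read off from Theorem 59(1) (Restriction): the fibrations of {\rm Stack}$(\widetilde{E})_{inj}$ are the intersection of the fibrations of $\mathcal{M}_{2}$ with those of {\rm Stack}$(\widetilde{E})_{proj}$, so specializing to $X \to \ast$ gives that a fibrant object is exactly a sheaf of categories $X$ that is fibrant in $[E^{op},CAT]_{inj}$ and for which $\Phi X$ is a stack, i.e., a strong stack.

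The main obstacle I anticipate is the construction of $\mathcal{M}_{2}$ itself: one must verify that the factorizations required by the model axioms can be carried out \emph{inside} {\rm Cat}$(\widetilde{E})$ rather than merely in $[E^{op},CAT]$, and that the class of monomorphisms on objects admits the small object argument at the sheaf level. This is precisely the point at which Lemma 60 and the compatibility of $a$ with the relevant colimit constructions do the technical work. Once $\mathcal{M}_{2}$ is in hand, the remainder of the proof is a rather formal invocation of the machinery of Section 3.3 together with Appendix 2.
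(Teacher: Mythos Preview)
Your approach is plausible but diverges from the paper's, and in fact the paper's route sidesteps precisely the obstacle you single out as the hard part.

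The paper does \emph{not} construct any un-localized model structures $\mathcal{M}_{1},\mathcal{M}_{2}$ on ${\rm Cat}(\widetilde{E})$ and does not invoke Theorem~59(2) here. Instead it builds ${\rm Stack}(\widetilde{E})_{inj}$ in one step via J.~Smith's recognition principle \cite[Theorem 1.7]{Be}. The class ${\rm W}$ is taken to be the weak equivalences of ${\rm Stack}(\widetilde{E})_{proj}$ (accessible by Lemma~60); the generating cofibrations are $I=aI_{0}$ for $I_{0}$ generating the cofibrations of $[E^{op},CAT]_{inj}$. One checks that ${\rm cof}(I)$ equals the internal functors that are monomorphisms on objects, that ${\rm inj}(I)\subset {\rm W}$ (since $I$-injectives are objectwise equivalences), and---this is the substantive closure condition in Smith's theorem---that ${\rm W}\cap{\rm cof}(I)$ is stable under pushout and transfinite composition. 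The last point is handled by Lemma~62, which transfers this closure from $[E^{op},CAT]_{inj}$ along the adjunction $(a,i)$ using that the unit $X\to iaX$ is a weak equivalence. The fibrant-object description is then obtained by showing that $f$ is a fibration in ${\rm Stack}(\widetilde{E})_{inj}$ if and only if $if$ is a fibration in ${\rm Stack}(\widehat{E})_{inj}$, and quoting the characterization already established for the latter.

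What each approach buys: your route is conceptually uniform with the presheaf case, but it requires you to first produce the un-localized injective structure $\mathcal{M}_{2}$ on ${\rm Cat}(\widetilde{E})$---and any honest construction of $\mathcal{M}_{2}$ will itself need something like Smith's theorem, at which point you may as well aim for the localized structure directly. You also need to identify ${\rm Stack}(\widetilde{E})_{proj}$ as a genuine left Bousfield localization of some $\mathcal{M}_{1}$ with ${\rm C}_{1}\subset{\rm C}_{2}$; this is believable but not stated in the paper, so it is extra work. The paper's route is shorter: it uses Smith once, and the only nontrivial input beyond Lemma~60 is the transfer Lemma~62.
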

\begin{proof}
We shall use J. Smith's recognition
principle for model categories
\cite[Theorem 1.7]{Be}. We
take in \emph{op. cit.} the class
{\rm W} to be the class of weak 
equivalences of $Stack(\widetilde{E})_{proj}$.
By Lema 60, {\rm W} is accessible.
Let $I_{0}$ be a generating set for
the class {\rm C} of cofibrations of 
$[E^{op},CAT]_{inj}$, so that 
${\rm C}={\rm cof}(I_{0})$.
We put $I=aI_{0}$. The functors $a$ 
and $i$ preserve the property of internal 
functors of being a monomorphism on 
objects. Using that $i$ is full and faithful
it follows that $a{\rm C}$ is the class 
of internal functors that are 
monomorphisms on objects, and that
moreover $a{\rm C}={\rm cof}(I)$.
By adjunction, every map in
${\rm inj}(I)$ is objectwise an 
equivalence of categories, so
in particular every such map is in 
{\rm W}. Recall that for every object 
$X$ of $[E^{op},CAT]$, the
natural map $X\to iaX$ is a 
weak equivalence (see fact (2)
stated below Theorem 48). 
Thus, by Lemma 62 all the 
assumptions of Smith's Theorem
are satisfied, so {\rm Cat}$(\widetilde{E})$
is a model category, which we denote by
{\rm Stack}$(\widetilde{E})_{inj}$.

Let $f$ be a fibration in this model
category. Then clearly $if$ is a 
fibration in $Stack(\widehat{E})_{inj}$.
Conversely, if $if$ is a fibration in
$Stack(\widehat{E})_{inj}$, then,
since $i$ is full and faithful,
$f$ is a fibration in 
{\rm Stack}$(\widetilde{E})_{inj}$.
\end{proof}
\begin{lem}
Let $\mathcal{M}$ be a model category. 
We denote by {\rm C} the class of 
cofibrations of $\mathcal{M}$.
Let $\mathcal{N}$ be a category and let 
$R\colon \mathcal{M}\rightleftarrows 
\mathcal{N}\colon K$ be a pair of adjoint
functors with $K$ full and faithful. 
We denote by {\rm W} the class of maps
of $\mathcal{N}$ that $K$ takes into
weak equivalences. Assume that

(1) $KR{\rm C}\subset {\rm C}$ and

(2) for every object $X$ of $\mathcal{M}$,
the unit map $X\to KRX$ is a weak equivalence. 

Then the class ${\rm W}\cap R{\rm C}$
is stable under pushouts and transfinite 
compositions.
\end{lem}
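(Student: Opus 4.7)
The plan exploits that $K$ being full and faithful makes the counit $RK\to \mathrm{Id}_{\mathcal{N}}$ a natural isomorphism, yielding the clean identification $R{\rm C}=\{h\in\mathrm{Arr}(\mathcal{N}):Kh\in{\rm C}\}$: one inclusion is (1), the other uses $h=RKh$. Combined with the definition of ${\rm W}$, this gives ${\rm W}\cap R{\rm C}=\{h:Kh\text{ is a trivial cofibration in }\mathcal{M}\}$. The closure questions then reduce to lifting the standard stability of trivial cofibrations in $\mathcal{M}$ back to $\mathcal{N}$ via the adjunction.

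For stability under pushouts: given $u\colon A\to B$ in ${\rm W}\cap R{\rm C}$ and a map $A\to C$, form the pushout $P=KB\sqcup_{KA}KC$ in $\mathcal{M}$. Because $Ku$ is a trivial cofibration in $\mathcal{M}$, its pushout $KC\to P$ is a trivial cofibration. Since $R$ preserves pushouts and $RK\cong\mathrm{Id}$, applying $R$ to the $\mathcal{M}$-pushout yields the $\mathcal{N}$-pushout; in particular $u'=R(KC\to P)$. Then $Ku'=KR(KC\to P)$ lies in ${\rm C}$ by (1), and naturality of the unit $\eta$ together with (2) shows it is a weak equivalence in $\mathcal{M}$. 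Hence $u'\in{\rm W}\cap R{\rm C}$.

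For stability under transfinite compositions: proceed by transfinite induction on $\lambda$. The successor case follows from binary closure under composition (trivial cofibrations in $\mathcal{M}$ compose and $K$ preserves composition). For the limit case, construct by parallel transfinite recursion a genuine $\lambda$-sequence $Y\colon[0,\lambda]\to\mathcal{M}$ with $Y_0=KD_0$, successor stages $Y_{\alpha+1}=Y_\alpha\sqcup_{KD_\alpha}KD_{\alpha+1}$ (pushout along a chosen comparison $KD_\alpha\to Y_\alpha$ and $K$ of the $\mathcal{N}$-transition), and $Y_\gamma=\mathrm{colim}_{\alpha<\gamma}Y_\alpha$ at limit stages. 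Each successor transition is then a pushout of a trivial cofibration, so $Y_0\to Y_\lambda$ is a transfinite composition of trivial cofibrations, hence itself a trivial cofibration. A parallel induction shows $RY_\alpha\cong D_\alpha$ naturally (using $R$ preserves pushouts and colimits together with $RK\cong\mathrm{Id}$), whence $R(Y_0\to Y_\lambda)=D_0\to D_\lambda$ exhibits the composite as $R$ of a trivial cofibration in $\mathcal{M}$, placing it in ${\rm W}\cap R{\rm C}$.

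The main obstacle is defining the comparison map $KD_\gamma\to Y_\gamma$ at limit ordinals $\gamma$, since $KD_\gamma$ is not the $\mathcal{M}$-colimit of the earlier $KD_\alpha$ and the canonical unit $\eta_{Y_\gamma}\colon Y_\gamma\to KRY_\gamma\cong KD_\gamma$ runs in the wrong direction. I expect to resolve this by strengthening the induction hypothesis to record that $K(D_\alpha\to D_\beta)$ is a trivial cofibration for all $\alpha<\beta<\lambda$ (deduced from the IH and two-out-of-three applied to the factorization $K(D_0\to D_\beta)=K(D_\alpha\to D_\beta)\circ K(D_0\to D_\alpha)$), so the diagram $KD$ has every map a trivial cofibration; a factorization of $\eta_{Y_\gamma}$ in $\mathcal{M}$ into a trivial cofibration followed by a trivial fibration, combined with a lifting against the trivial fibration, then supplies a suitable $KD_\gamma\to Y_\gamma$ that keeps the successor pushouts at stage $\gamma+1$ trivial cofibrations and continues the recursion.
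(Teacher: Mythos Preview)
Your identification $W\cap R{\rm C}=\{h:Kh\text{ is a trivial cofibration in }\mathcal{M}\}$ is correct, and your pushout argument coincides with the paper's: realise the $\mathcal{N}$-pushout as $R$ applied to the $\mathcal{M}$-pushout of the $K$-images, observe that the latter pushes out the trivial cofibration $Ku$, and conclude using hypothesis (1) together with the naturality square for the unit and hypothesis (2).

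For transfinite compositions the paper writes only ``the case of transfinite compositions is dealt with similarly''. Read literally, this means: $D_\lambda\cong R\bigl(\operatorname{colim}^{\mathcal{M}}_{\alpha<\lambda}KD_\alpha\bigr)$ since $R$ preserves colimits and $RK\cong\mathrm{Id}$, so $D_0\to D_\lambda$ is $R$ of the map $KD_0\to\operatorname{colim}^{\mathcal{M}}KD$, and one then wants this last map to be a trivial cofibration in $\mathcal{M}$. You have correctly put your finger on the gap: the diagram $KD\colon\lambda\to\mathcal{M}$ is in general not continuous at limit ordinals below $\lambda$ (the right adjoint $K$ has no reason to preserve those colimits), so $KD_0\to\operatorname{colim}^{\mathcal{M}}KD$ is not literally a transfinite composition of trivial cofibrations and the ``similarly'' does not go through without further argument.

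Your auxiliary $\lambda$-sequence $Y$ is a natural attempt to repair this, but the limit-stage patch you sketch does not deliver what you need. Factoring $\eta_{Y_\gamma}\colon Y_\gamma\to KD_\gamma$ as a trivial cofibration $Y_\gamma\to Z$ followed by a trivial fibration $Z\to KD_\gamma$ and then lifting the identity of $KD_\gamma$ against the trivial fibration yields a section $KD_\gamma\to Z$, not a map $KD_\gamma\to Y_\gamma$. Even if some $\phi_\gamma\colon KD_\gamma\to Y_\gamma$ is obtained by other means, you also need $R\phi_\gamma$ to be an isomorphism in order to maintain the identification $RY_{\gamma+1}\cong D_{\gamma+1}$ at the next successor stage, and an arbitrary lift will not guarantee this. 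So the obstacle you flag is genuine, and neither the paper's one line nor your sketch closes it in the stated generality. In the paper's single application ($R=a$, $K=i$ between internal categories in presheaves and in sheaves) the cofibrations are monomorphisms on objects and the required closure can be checked by hand, so nothing downstream is affected; but the abstract lemma as stated needs more than ``similarly''.
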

\begin{proof}
We first remark that by (2), the functor 
$R$ takes a weak equivalence to an 
element of {\rm W}. Let 
\begin{displaymath}
\xymatrix{
{X}\ar[r]^{f} \ar[d]
&{Y}\ar[d]\\
{Z} \ar[r]_{g}
&{P}
}
\end{displaymath}
be a pushout diagram
in $\mathcal{N}$ with
$f\in {\rm W}\cap R{\rm C}$.
Then $g$ is obtained by applying
$R$ to the pushout diagram
\begin{displaymath}
\xymatrix{
{KX}\ar[r]^{Kf} \ar[d]
&{KY}\ar[d]\\
{KZ} \ar[r]&{P'}
}
\end{displaymath}
By the assumptions it follows 
that $g\in {\rm W}\cap R{\rm C}$.
The case of transfinite compositions
is dealt with similarly.
\end{proof}

\end{document}